\renewcommand{\bot}{\perp}
\newtheorem{theorem}{Theorem}[section]
\newtheorem{corollary}[theorem]{Corollary}
\newtheorem{lemma}[theorem]{Lemma}
\newtheorem{proposition}[theorem]{Proposition}
\newtheorem{definition}[theorem]{Definition}
\newtheorem{remark}[theorem]{Remark}
\newtheorem{notation}[theorem]{Notation}
\newtheorem{example}[theorem]{Example}
\numberwithin{equation}{section}
\begin{document}
\title{Transverse geometric formality}
\author[G.~Habib]{Georges Habib}
\address{Lebanese University \\
Faculty of Sciences II \\
Department of Mathematics\\
P.O. Box 90656 Fanar-Matn \\
Lebanon\\ 
And Universit\'e de Lorraine\\
CNRS, IECL\\
54506 Nancy, France}
\email[G.~Habib]{ghabib@ul.edu.lb}
\author[K.~Richardson]{Ken Richardson}
\address{Department of Mathematics \\
Texas Christian University \\
Fort Worth, Texas 76129, USA}
\email[K.~Richardson]{k.richardson@tcu.edu}
\author[R.~Wolak]{Robert Wolak}
\address{Wydzial Matematyki i Informatyki UJ \\
Ul. prof. Stanislawa Lojasiewicza 6 \\
30-348 Krakow, Poland}
\email[R.~Wolak]{robert.wolak@uj.edu.pl}
\subjclass[2010]{53C12; 53C25; 58A14}
\keywords{ Riemannian foliation, isometric flow, Laplacian, basic
cohomology, geometric formality}
\thanks{The authors acknowledge that the research cooperation was funded by the program Excellence Initiative – Research University at the Jagiellonian University in Krakow within the framework of the research group Reeb-Reinhart 2022. The second author acknowledges support of Alfried Krupp Wissenschaftskolleg.}
\date{May 17, 2024}

\begin{abstract}
A Riemannian metric on a closed manifold is said to be geometrically formal if the 
wedge product of any two harmonic forms is harmonic; equivalently, the interior product of any two harmonic forms is harmonic. Given a Riemannian foliation on a closed manifold, we say that a bundle-like metric is 
transversely geometrically formal if the interior product of any two basic harmonic forms 
is basic harmonic. In this paper, we examine the geometric and topological consequences of this condition.
\end{abstract}

\maketitle
\tableofcontents

\section{Introduction}

The notion of formality, as introduced in \cite{Sullivan1973DiffFormsTopMfldsBook} and explored further in
\cite{DeligneGriffithsMorganSullivan1975RealHomotKahlerMflds}, played a very important role in the study of the
topology of manifolds.   In particular, the authors demonstrated that compact K\"ahler manifolds are formal, thus the formality is an obstruction to the existence of  a K\"{a}hler structure. Moreover, there are examples of symplectic, non-K\"ahler solvmanifolds; cf. \cite%
{TralleOprea1997SymplMfldsNoKahler}. Formal solvmanifolds were investigated
in depth in \cite{Kasuya2013GeomFormSolvmflds}. 

In \cite{Kotschick2001OnProductsHarmFms} Kotschick proposed a finer notion
of formality for Riemannian manifolds: a compact Riemannian manifold $(M,g)$
is geometrically formal if the wedge product of any two harmonic forms is a
harmonic form. Therefore the space of harmonic forms is an algebra with the
wedge product, and according to the Hodge theorem it is a minimal model for
the cohomology of $M$. Thus any geometrically formal Riemannian manifold $%
(M,g)$ is formal. The investigation into the geometrical formality has been
continued by Kotschick and some other researchers resulting in numerous
interesting theorems (cf. \cite{BangertKatz2004_OptimalHarm1formsConstNorm},\cite{GrosjeanNagy2009OnTheCohomologyAlgGeomFormal},
 \cite{Kotschick2017GeometricFormNonnegScalarCurv}),
among them a fine classification of geometrically
formal Riemannian manifolds in low dimensions in \cite{BaerC2015GeometricFormal4MfldsNonnegSectCurv}. In \cite%
{NagyP2006OnTheLengthAndProduct}, the author gave a complete description of K%
\"{a}hler manifolds where all harmonic one-forms have constant length, among
other results.

Riemannian foliated manifolds (technically, Riemannian manifolds with
foliations endowed with bundle-like metrics) form a subclass of
Riemannian manifolds that appears naturally in many geometrical and applied
problems. The foliated aspect is best expressed by the study of the
transverse Riemannian geometry of the foliation. Moreover, one can
investigate the interplay between the transverse and global properties of
the foliated manifold. One of the tools is the basic cohomology  $H^*(M,\mathcal{F})$,   which in
the case of compact Riemannian foliated manifolds is finite dimensional, 
\cite{ElKacimiHector1986DecompositionHodge}, \cite%
{KamberTondeur1987deRhamHodgeTheory}, \cite%
{ParkRichardson1996_BasicLaplacian}. Additionally, the Hodge theory for the
basic cohomology was developed as well as the Poincar\'{e} duality
property, cf. \cite{KamberTondeur1984DualityThmsFoliations}.

Continuing this line of research we introduce the notion of transverse
geometric formality for a compact Riemannian foliated manifold $(M,g,%
\mathcal{F})$. We might like to say that $(M,g,\mathcal{F})$ is transversely
geometrically formal if any wedge product of two basic harmonic forms is
basic harmonic, but the presence of mean curvature causes issues with such a
definition; since the basic Laplacian does not commute with the basic Hodge
star-operator, under such a definition basic harmonic forms would not have
constant norm, which is a key property of formal metrics. Instead, the
appropriate definition of transverse geometric formality is that the
interior product of any two basic harmonic forms is also basic harmonic. If
the basic mean curvature is zero, then these two properties coincide, as
they do for formal metrics on Riemannian manifolds.

In Section \ref{PrelimSection} we recall, for the reader's convenience, some
definitions and properties of Riemannian foliated manifolds. The notion of
transverse geometric formality is introduced in Section \ref%
{TransvFormalitySection}, in which we provide the definition and investigate
the basic properties of general Riemannian foliated manifolds. 
We show an upper bound on the dimensions of basic cohomology satisfied by 
foliations admitting transversely formal metrics in
 Corollary \ref{basicCohomologyLiimitationCorollary}.
 In Theorem \ref{maxRankImpliesMinimalThm}, we consider a bundle-like metric
 on the foliation that satisfies transverse formality and has basic harmonic mean curvature.
In this case, if the dimension of a cohomology group is maximal, the foliation is necessarily minimal.
 
We next investigate the relation between geometric formality and transverse
geometric formality in Subsection \ref{FormalityTransverseFormalitySection}. 
For example, in Proposition \ref{formalImpliesTransverseFormalProp}, we show that for a Riemannian foliated
manifold with basic mean curvature and involutive normal bundle,
geometric formality implies transverse formality. 
Next, we study
transverse geometric formality of transversely Lie foliations, a class of
Riemannian foliations which, due to Molino's structure theorem, cf. \cite%
{Molino1988_RiemFolsBook}, are the basic building blocks of any compact
Riemannian foliated manifold.
We prove various results in this section, including Proposition \ref{LieFoliationProposition},
where we show that if a bundle-like metric is transversely formal and has basic harmonic
mean curvature with maximal $\dim H^1(M,\mathcal{F})$, then the foliation is a
$\mathbb{R}^q$-Lie foliation.
In Subsection \ref{TransvFormalityNotTransversePropSubsection}, we demonstrate that
geometric formality is not a transverse property in H\"{a}efliger's sense, and we show for example 
that taut foliations that are transversely formal must be minimal.

  Section \ref{FlowSection}   is dedicated to the detailed study of
one-dimensional Riemannian foliations, i.e. Riemannian flows, and their
subclasses, namely isometric flows, K-contact manifolds, and   Sasakian
manifolds.   Among other results, we prove in Theorem \ref{prop:minimalflowformal} 
that if a Riemannian flow is minimal and   transversely geometrically formal   and satisfies a Ricci curvature condition,   some
inequalities   are satisfied between the basic and ordinary cohomology.
In Theorem \ref{SasakiFormalImpliesTransFormalTheorem}, on closed   Sasakian manifold,  
geometric formality implies that the foliation defined by the Reeb vector field is 
transversely   geometrically formal.   We classify all one-dimensional transversely   geometrically formal   foliations on 
closed three-manifolds in Theorem \ref{classificationTheorem}.

In Section \ref{ExamplesSection}, we present some relevant examples. First,
we show that   $\mathbb{S}^{3}$   foliated by the Hopf fibration is transversely
geometrically formal. Then we investigate Carri\`{e}re's example of a
non-isometric flow on a 3-manifold and verify that it is transversely
geometrically formal. Finally, we construct some Riemannian foliations on a
solvmanifold and study their geometric formality. In the appendix we give an example   of a nontaut Riemannian foliation of codimension 4, with noninvolutive normal bundle carrying a transversely formal bundle-like metric.

\section{Preliminaries}\label{PrelimSection}

Let $\left( M,g\right) $ be a closed Riemannian manifold of dimension $m$,
endowed with a transversely oriented Riemannian foliation $\mathcal{F}$ of
codimension $q$ and dimension $p=m-q$. The foliation $\mathcal{F}$ is
given by an integrable subbundle $T\mathcal{F}$ of $TM$ of rank $p$.   Let $Q=TM\diagup T%
\mathcal{F}\cong \left( T\mathcal{F}\right) ^{\bot }$ be the normal bundle
of the foliation. In this paper we will in fact let $Q=\left( T\mathcal{F}%
\right) ^{\bot }$ to simplify exposition. We assume the metric $g$   to be   bundle-like, meaning that
the induced metric $g_{Q}$ of $g$ on $Q$ satisfies   the holonomy invariance property.   This
means 
\begin{equation*}
\mathcal{L}_{X}g_{Q}=0,
\end{equation*}%
for any $X\in \Gamma \left( T\mathcal{F}\right) $, where  $\mathcal{L}_X$
denotes the Lie derivative in the direction of $X$. Such metrics have been studied for decades; see 
\cite{Reinhart1959FolMfldsBundleLikeMetrics,Molino1988_RiemFolsBook,
Tondeur1997GeometryOfFoliations}.  The existence of bundle-like metrics assures the existence of a transverse Levi-Civita 
connection $\nabla $ on $\Gamma(Q)$,   i.e. a torsion-free connection compatible with the metric $g_{Q}$ extending the Bott partial connection; see \cite[Ch. 3]%
{Tondeur1997GeometryOfFoliations}.   This connection can be extended in the
usual way to act on  $\Gamma(\wedge^{\ast}Q^{\ast })$.  For simplicity we
will assume that our foliated manifolds are oriented and transversely
oriented.

\begin{notation}
We call the triple $\left( M,\mathcal{F},g\right) $ an oriented Riemannian
foliated manifold (ORFM) if $M$ is an oriented, closed Riemannian manifold
with metric $g$, and $\mathcal{F}$ is a transversely oriented Riemannian
foliation for which $g$ is bundle-like.
\end{notation}

Following \cite{Tondeur1997GeometryOfFoliations} we define the mean
curvature form $\kappa $   (or its dual vector $\kappa ^{\#}$)   as

\begin{equation*}
\kappa ^{\#}=\sum_{j=1}^{p}\pi \left( \nabla _{f_{j}}^{M}f_{j}\right) ,
\end{equation*}%
where  $\pi :TM\rightarrow Q$ is the projection, $\left\{ f_{j}\right\}_{j=1,\ldots,p} $
is a local orthonormal frame of $T\mathcal{F}$, and $\nabla ^{M}$ is the
Levi-Civita connection on $M$.

The characteristic form $\chi _{\mathcal{F}}$ of the foliation $\mathcal{F}$
is the leafwise volume form and is defined as follows:

\begin{equation*}
\chi _{\mathcal{F}}(Y_{1},...,Y_{p})=\det (g(Y_{i},f_{j})_{i,j=1,...,p}),
\end{equation*}
where $Y_{1},...,Y_{p}$ are any vector fields on $M$.

The characteristic form $\chi_{\mathcal{F}}$ and the mean curvature form $%
\kappa$ are related by the Rummler's formula, \cite{RummlerH1979Qulelques}:

\begin{equation*}
d\chi _{\mathcal{F}}=-\kappa \wedge \chi _{\mathcal{F}}+\varphi _{0},
\end{equation*}%
where $\varphi _{0}$ satisfies the property $\chi _{\mathcal{F}}\lrcorner
\varphi _{0}=0$; here, $\lrcorner $ denotes interior product of differential
forms. The mean curvature one-form $\kappa $ depends on the choice of the
bundle-like metric. The normal bundle $Q$ is involutive if and only if $%
\varphi _{0}=0$.

On any Riemannian foliated manifold $(M,g,\mathcal{F})$, the set of all
basic $r$-forms ($0\leq r\leq q$) is
\begin{equation*}
\Omega^{r}(M,\mathcal{F})=\{\alpha \in   \Omega^{r}(M):  X\lrcorner \alpha =X\lrcorner d\alpha =0 ~%
\text{for all vectors }\,X\in \Gamma (T\mathcal{F})\},
\end{equation*}%
which is a subcomplex of the differential forms  $\Omega^{r}(M)$  on $M$.   The basic cohomology is then defined as being the cohomology associated to the exterior differential restricted to basic forms, that is 
$$H^{r}(M,\mathcal{F})=\frac{{\rm ker}(d:\Omega^r(M,\mathcal{F})\to \Omega^{r+1}(M,\mathcal{F}))}{{\rm im}(d:\Omega^{r-1}(M,\mathcal{F})\to \Omega^r(M,\mathcal{F}))},$$ 
for $0\leq r\leq q$. It is shown in \cite{ElKacimiHector1986DecompositionHodge}, \cite%
{KamberTondeur1987deRhamHodgeTheory}, \cite%
{ParkRichardson1996_BasicLaplacian} that the basic cohomology is always finite dimensional but does not necessarily satisfy Poincar\'e duality (see \cite{Carriere1984FlotsRiem} for an example).  

The restriction of the
bundle-like metric to the normal bundle of the foliation of the   oriented   Riemannian
manifold $(M,g,\mathcal{F})$ defines the basic Hodge star operator  $\Bar{\ast}$, cf. 
\cite{Tondeur1997GeometryOfFoliations},

\begin{equation*}
\Bar{\ast}\colon \Omega^{r}(M,\mathcal{F})\rightarrow \Omega^{q-r}(M,\mathcal{F}).
\end{equation*}
  This operator is related to the usual Hodge $\ast $-operator on $(M,g)$ by the formula
\begin{equation*}
\Bar{\ast}\alpha =(-1)^{p(q-r)}\ast (\alpha \wedge \chi _{\mathcal{F}})
\end{equation*}%
for any $\alpha \in \Omega^{r}(M,\mathcal{F}) $.  The pointwise inner product   between basic forms   is defined in the usual way as;
if $\alpha ,\beta \in \Omega^{r}(M,\mathcal{F}) $ we have%
\begin{equation*}
\left( \alpha ,\beta \right) =\overline{\ast }\left( \alpha \wedge \overline{%
\ast }\beta \right) .
\end{equation*}%
%
In this case, the standard scalar product satisfies 
\begin{equation*}
\langle \alpha ,\beta \rangle =\int_{M}\alpha \wedge \Bar{\ast}\beta \wedge
\chi _{\mathcal{F}}.
\end{equation*}%

A Riemannian foliation on a compact manifold is said to be taut if there
exists a Riemannian metric which makes all its leaves minimal submanifolds.
Tautness is characterized by the nonvanishing of the top dimensional basic
cohomology, i.e., $H^{q}(M,\mathcal{F})\neq 0$ (see \cite{MasaX1992DualityMinRiemFols}).

Let $L^2(\Omega(M))$ and  $L^2(\Omega(M,\mathcal{F}))$ denote the   closures of $\Omega(M)$ and $\Omega(M,\mathcal{F})$  with respect to the $L^2$ inner product on forms $\langle
.\,,.\,\rangle $,   and let $P_b:L^2(\Omega(M))\to L^2(\Omega(M,\mathcal{F}))$ denote the orthogonal projection (see \cite{ParkRichardson1996_BasicLaplacian}). We denote $\kappa _{b}=P_b\kappa$. By \cite{AlvarezLopez1992BasicComponentMeanCurvature}, $%
\kappa _{b}$ is always closed, and it defines a cohomology class $\left[
\kappa _{b}\right] \in H^{1}\left( M,\mathcal{F}\right) $ that is
independent of the choice of   a bundle-like metric.   By \cite%
{Dominguez1998FinitenessTensenessThmsRiemFols}, there exists a bundle-like
metric $g^{\prime }$ whose mean curvature is basic ($\kappa =\kappa _{b}$), such that $g$ and $g^{\prime}$ define the same metric on   $Q$.   Another property that characterises tautness of a Riemannian foliation is that $[\kappa _{b}]=0$ (see \cite{MasaX1992DualityMinRiemFols}).

The formal adjoint $\delta _{b}$ of $d$ in the complex  $\Omega^{*}(M,\mathcal{F})$ 
with respect to the scalar product $\langle .\,,.\,\rangle $ is the operator 
\begin{equation*}
\delta _{b}=(-1)^{q(r+1)+1}\Bar{\ast}%
(d-\kappa _{b}\wedge )\Bar{\ast}:  \Omega^{r}(M,\mathcal{F}) \rightarrow  \Omega^{r-1}(M,\mathcal{F}).
\end{equation*}%
The basic Laplacian is the operator on basic forms defined by 
\begin{equation*}
\Delta _{b}=\delta _{b}d+d\delta _{b}.
\end{equation*}

A basic form $\alpha $ is called \textbf{basic harmonic} if and only if $%
\Delta _{b}\alpha =0$; equivalently, $d\alpha=0$ and $\delta_b\alpha=0$.

The transverse volume form $\nu $ is a basic $q$-form; it satisfies  $\delta
_{b}\nu =\kappa _{b}\lrcorner \nu=\Bar{\ast}\kappa _{b}$.   In particular, the transverse volume form $%
\nu $ is basic harmonic if and only if the basic component of mean curvature is zero. This is certainly the case if the foliation is minimal, but general bundle-like metrics on a taut Riemannian foliation do not necessarily satisfy this condition.

The basic Hodge theorem was proved in special cases by \cite%
{ElKacimiHector1986DecompositionHodge}, \cite%
{KamberTondeur1987deRhamHodgeTheory} and in general by \cite%
{ParkRichardson1996_BasicLaplacian}. In particular, each basic cohomology
class contains a unique basic harmonic form.

Let us introduce the twisted differential
\begin{equation*}
d_{\kappa _{b}}:=d-\kappa _{b}\wedge.
\end{equation*}
Since $\kappa_b$ is closed, the twisted differential satisfies $d^2_{\kappa _{b}}=0$ and, therefore, defines a chain complex.  We denote by $H_{\kappa_b}^*(M,\mathcal{F})$ the corresponding cohomology.  The formal adjoint of $d_{\kappa _{b}}$ is $\delta_{\kappa _{b}}:=\delta_b-\kappa _{b}\lrcorner$.   The two adjoint operators   $\delta_b$ and $\delta_{\kappa_b}$  are related by
\begin{equation*}
\Bar{*}\delta_b=\delta_{\kappa_b}\Bar{*}.
\end{equation*} Hence, we introduce  the twisted Laplacian operator    as   \begin{equation*}
\Delta_{\kappa_b} := d_{\kappa_b}\delta_{\kappa_b} + \delta_{\kappa_b} d_{\kappa_b}.
\end{equation*}  

 As for the basic cohomology, we can show that there is a basic Hodge theorem for the twisted Laplacian and that each cohomology class in $H_{\kappa_b}^*(M,\mathcal{F})$ contains a unique basic  \textbf{$\kappa_b$-harmonic} form. Here, a basic form $\alpha$ is called  $\kappa_b$-harmonic if $\Delta_{\kappa_b}\alpha=0$; equivalently, $%
d\alpha =\kappa _{b}\wedge \alpha $ and $\delta _{b}\alpha =\kappa
_{b}\lrcorner \alpha $.  

For the basic cohomology of compact Riemannian foliated manifolds we have
the following twisted duality theorem (cf. \cite[Theorem 7.54]%
{Tondeur1997GeometryOfFoliations}, with modifications from \cite{ParkRichardson1996_BasicLaplacian} and \cite{AlvarezLopez1992BasicComponentMeanCurvature}):

\begin{theorem}
Let $(M,{\mathcal{F}},g)$ be an ORFM. The pairing $\alpha \otimes \beta \mapsto \int_{M}\alpha \wedge
\beta \wedge \chi _{\mathcal{F}}$ induces a nondegenerate pairing
\begin{equation*}
H^r(M,{\mathcal{F}}) \otimes H_{\kappa_b}^{q-r}(M,{\mathcal{F}})
\rightarrow \mathbb{R}
\end{equation*}
of finite dimensional spaces.
\end{theorem}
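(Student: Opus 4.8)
The plan is to prove two things: first, that the integral $\int_M\alpha\wedge\beta\wedge\chi_{\mathcal{F}}$ depends only on the classes $[\alpha]\in H^r(M,\mathcal{F})$ and $[\beta]\in H_{\kappa_b}^{q-r}(M,\mathcal{F})$, so that the pairing is well defined; and second, that it is nondegenerate. Finite-dimensionality of both spaces is already available, as is the existence of a unique basic harmonic representative in each basic class and a unique $\kappa_b$-harmonic representative in each $H_{\kappa_b}$-class. My strategy is to establish well-definedness through a foliated integration-by-parts formula, and nondegeneracy through the duality furnished by the basic Hodge star $\Bar{\ast}$.

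For well-definedness, first I would record the identity
\[
\int_M d\gamma\wedge\beta\wedge\chi_{\mathcal{F}} = (-1)^{\deg\gamma+1}\int_M\gamma\wedge d_{\kappa_b}\beta\wedge\chi_{\mathcal{F}}
\]
for basic forms $\gamma,\beta$ with $\deg\gamma+\deg\beta = q-1$. To prove it, apply Stokes's theorem to the $(m-1)$-form $\gamma\wedge\beta\wedge\chi_{\mathcal{F}}$, expand $d(\gamma\wedge\beta\wedge\chi_{\mathcal{F}})$ by the Leibniz rule, and substitute Rummler's formula $d\chi_{\mathcal{F}} = -\kappa\wedge\chi_{\mathcal{F}}+\varphi_0$. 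The term involving $\varphi_0$ drops out for a type reason: $\chi_{\mathcal{F}}\lrcorner\varphi_0=0$ forces every term of $\varphi_0$ to carry at least two normal legs, so wedging it with the basic form $\gamma\wedge\beta$ of transverse degree $q-1$ would require more than $q$ normal legs and hence vanishes. Once the identity is in hand, it shows simultaneously that replacing $\gamma$ by $\gamma+d\eta$ (when $d_{\kappa_b}\beta=0$) and replacing $\beta$ by $\beta+d_{\kappa_b}\eta$ (when $d\gamma=0$) leaves the integral unchanged, which is exactly well-definedness on cohomology. The main obstacle here is the passage from $\kappa$ to its basic projection $\kappa_b$ in the surviving term $\int_M\gamma\wedge\beta\wedge\kappa\wedge\chi_{\mathcal{F}}$: only the normal component of $\kappa$ survives the wedge with $\chi_{\mathcal{F}}$, and the resulting expression equals, up to sign, the global $L^2$ inner product of the basic $1$-form $\Bar{\ast}(\gamma\wedge\beta)$ with $\kappa$; since $P_b$ is the orthogonal projection onto basic forms and $\kappa_b=P_b\kappa$, this inner product is unchanged if $\kappa$ is replaced by $\kappa_b$. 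This is precisely the point where the results of \'{A}lvarez L\'{o}pez and Park--Richardson enter.

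For nondegeneracy, the key observation is that $\Bar{\ast}$ intertwines the two Hodge theories. Using the stated relation between $\Bar{\ast}$ and the codifferentials together with its companion relation between $\Bar{\ast}d$ and $d_{\kappa_b}\Bar{\ast}$ — both consequences of the definitions of $\delta_b$, $\delta_{\kappa_b}$ and of $\Bar{\ast}^2=(-1)^{r(q-r)}$ — one obtains $\Bar{\ast}\Delta_b=\Delta_{\kappa_b}\Bar{\ast}$. Hence $\Bar{\ast}$ carries a basic harmonic $r$-form to a $\kappa_b$-harmonic $(q-r)$-form, and, being invertible, restricts to an isomorphism from the space of basic harmonic $r$-forms onto the space of $\kappa_b$-harmonic $(q-r)$-forms; in particular $\dim H^r(M,\mathcal{F})=\dim H_{\kappa_b}^{q-r}(M,\mathcal{F})$. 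Finally, given a nonzero class in $H^r(M,\mathcal{F})$ with basic harmonic representative $\alpha\neq0$, I would pair it against the class of $\Bar{\ast}\alpha$: by definition of the inner product,
\[
\int_M\alpha\wedge\Bar{\ast}\alpha\wedge\chi_{\mathcal{F}}=\langle\alpha,\alpha\rangle=\|\alpha\|^2>0.
\]
Thus the map $H^r(M,\mathcal{F})\to\big(H_{\kappa_b}^{q-r}(M,\mathcal{F})\big)^{\ast}$ induced by the pairing is injective, and since the two cohomologies have equal finite dimension it is an isomorphism, giving nondegeneracy.
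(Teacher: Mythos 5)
Your proof is correct and follows essentially the route the paper relies on: the paper gives no proof of its own but cites Tondeur's Theorem 7.54 (with the modifications of \'{A}lvarez L\'{o}pez and Park--Richardson), and immediately afterwards sketches exactly your nondegeneracy mechanism, namely that $\overline{\ast}$ intertwines $\Delta_{b}$ with $\Delta_{\kappa_{b}}$ and hence exchanges basic harmonic and basic $\kappa_{b}$-harmonic representatives, yielding $H^{r}(M,\mathcal{F})\simeq H_{\kappa_{b}}^{q-r}(M,\mathcal{F})$ and the pairing $\int_{M}\alpha\wedge\overline{\ast}\alpha\wedge\chi_{\mathcal{F}}=\Vert\alpha\Vert^{2}>0$. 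Your Stokes/Rummler argument for well-definedness, including the replacement of $\kappa$ by $\kappa_{b}$ via self-adjointness of the orthogonal projection $P_{b}$, is the standard content of those cited references, so you have simply written out in full the same argument the paper invokes.
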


  It is not difficult to check that the transversal Hodge star $\overline{\ast }$ operator maps basic harmonic $r$-forms
to basic $\kappa _{b}$-harmonic $\left( q-r\right) $-forms, and vice-versa. This is a consequence of the fact that $\Delta _{b}\overline{\ast }=%
\overline{\ast }\Delta _{\kappa _{b}}$ and $\Delta _{\kappa _{b}}\overline{%
\ast }=\overline{\ast }\Delta _{b}$; see \cite[proof of Theorem 7.54]%
{Tondeur1997GeometryOfFoliations}, with modifications from \cite[Corollary
3.3 and proof]{AlvarezLopez1992BasicComponentMeanCurvature} or \cite[%
Proposition 2.2]{ParkRichardson1996_BasicLaplacian}. Therefore, by the basic Hodge theorem for $\Delta_b$ and $\Delta_{\kappa_b}$, we have that 
$$H^*(M,{\mathcal{F}})\simeq  H_{\kappa_b}^{q-*}(M,{\mathcal{F}}).$$
This also explains the reason why Poincar\'e duality does not hold in general for the basic cohomology.

  Finally, we end with the following result that we will use in the sequel. 
\begin{lemma}
\label{basicHarmonicOneFormsAreHarmonicLemma}If $\left( M,\mathcal{F,}%
g\right) $ has basic mean curvature, then every basic harmonic one-form is
harmonic.
\end{lemma}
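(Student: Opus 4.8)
The plan is to reduce the claim to a single computation with the ordinary codifferential. Write $\delta$, $\ast$, $\Delta$ for the ordinary codifferential, Hodge star, and Laplacian on $(M,g)$. Since a basic harmonic one-form $\alpha$ already satisfies $d\alpha=0$, and since on the closed manifold $M$ one has $\Delta\alpha=0$ if and only if $d\alpha=0$ and $\delta\alpha=0$, it suffices to prove $\delta\alpha=0$. The idea is to express $\ast\alpha$ through basic data, differentiate using Rummler's formula, and then recognize the basic codifferential $\delta_b\alpha$, which vanishes, inside the result.

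First I would record the pointwise identity $\ast\alpha=\overline{\ast}\alpha\wedge\chi_{\mathcal{F}}$, valid for every basic form $\alpha$; this follows by applying $\ast$ to the stated relation $\overline{\ast}\alpha=(-1)^{p(q-r)}\ast(\alpha\wedge\chi_{\mathcal{F}})$ and using $\ast\ast=(-1)^{k(m-k)}$ together with $\overline{\ast}\,\overline{\ast}=(-1)^{r(q-r)}$ on basic forms. For a one-form we then have $\delta\alpha=-\ast d\ast\alpha=-\ast d(\overline{\ast}\alpha\wedge\chi_{\mathcal{F}})$.

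Next I would expand the right-hand side by the Leibniz rule and substitute Rummler's formula $d\chi_{\mathcal{F}}=-\kappa\wedge\chi_{\mathcal{F}}+\varphi_0$. After collecting terms this produces
$$d(\overline{\ast}\alpha\wedge\chi_{\mathcal{F}})=\bigl(d\overline{\ast}\alpha-\kappa\wedge\overline{\ast}\alpha\bigr)\wedge\chi_{\mathcal{F}}+(-1)^{q-1}\,\overline{\ast}\alpha\wedge\varphi_0.$$
Here the hypothesis of basic mean curvature enters: since $\kappa=\kappa_b$, the displayed formula for $\delta_b$ gives $\overline{\ast}\,\delta_b\alpha=-\bigl(d\overline{\ast}\alpha-\kappa\wedge\overline{\ast}\alpha\bigr)$, using that $d\overline{\ast}\alpha-\kappa\wedge\overline{\ast}\alpha$ is a basic $q$-form on which $\overline{\ast}\,\overline{\ast}=\mathrm{id}$. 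As $\alpha$ is basic harmonic, $\delta_b\alpha=0$, so the first term vanishes identically and only the $\varphi_0$ term survives.

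The last and only delicate step is to show that the residual term $\overline{\ast}\alpha\wedge\varphi_0$ vanishes; this is where the restriction to one-forms is essential. I would argue by counting leafwise and transverse degrees: $\overline{\ast}\alpha$ is a basic $(q-1)$-form, hence of transverse degree $q-1$ and leafwise degree $0$, while $\varphi_0$ is a $(p+1)$-form. For the product to be a nonzero (necessarily top-degree) form, $\varphi_0$ must contribute its component of leafwise degree $p$ and transverse degree $1$; but that component equals $\chi_{\mathcal{F}}\lrcorner\varphi_0$ up to sign, which is zero by Rummler's formula. Hence $\overline{\ast}\alpha\wedge\varphi_0=0$, giving $\delta\alpha=0$ and therefore $\Delta\alpha=0$. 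The main obstacle is precisely this $\varphi_0$ contribution, which encodes the possible non-involutivity of $Q$; it disappears only because $\overline{\ast}\alpha$ has transverse codegree one, a feature special to one-forms.
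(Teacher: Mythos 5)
Your proof is correct, and it is in essence the Hodge--dual of the paper's argument, carried out from scratch rather than by citation. The paper's proof is two lines: it quotes the comparison formula of \cite[Proposition 2.4]{ParkRichardson1996_BasicLaplacian}, namely that on basic forms $\delta_b\alpha=\delta\alpha+(\kappa_b-\kappa)\lrcorner\alpha+(-1)^p\,\varphi_0\lrcorner(\chi_{\mathcal{F}}\wedge\alpha)$, kills the middle term using $\kappa=\kappa_b$, and kills the last term because $\varphi_0\lrcorner(\chi_{\mathcal{F}}\wedge\cdot)$ vanishes on basic one-forms. You instead re-derive exactly the needed instance of that formula from identities stated in the paper: the relation $\ast\alpha=\overline{\ast}\alpha\wedge\chi_{\mathcal{F}}$ (your sign bookkeeping is right under the paper's conventions), the Leibniz rule together with Rummler's formula, the recognition of the first bracket as $-\overline{\ast}\delta_b\alpha$ once $\kappa=\kappa_b$, and a bidegree count resting on $\chi_{\mathcal{F}}\lrcorner\varphi_0=0$ to dispose of the residual term $\overline{\ast}\alpha\wedge\varphi_0$. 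That last step is precisely the dual of the paper's observation that $\varphi_0\lrcorner(\chi_{\mathcal{F}}\wedge\alpha)=0$ for basic one-forms, and your explanation of why this is special to degree one (transverse codegree one) is the correct mechanism, which the paper leaves implicit. What your route buys is self-containedness, at the cost of length; the paper buys brevity by outsourcing the operator comparison to the reference. One wording quibble: the $(p,1)$-bicomponent of $\varphi_0$ cannot literally equal $\chi_{\mathcal{F}}\lrcorner\varphi_0$ up to sign, since their degrees differ; it equals $\pm\,\chi_{\mathcal{F}}\wedge(\chi_{\mathcal{F}}\lrcorner\varphi_0)$, which vanishes for the same reason, so your conclusion stands.
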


\begin{proof}
Let $p=\dim \mathcal{F}$. If $\alpha $ is a basic harmonic one-form, by \cite%
[Proposition 2.4]{ParkRichardson1996_BasicLaplacian} we have that $d\alpha
=0 $ and 
\begin{eqnarray*}
0 &=&\delta _{b}\alpha =\left( \delta +\left( \kappa _{b}-\kappa \right)
\lrcorner +\left( -1\right) ^{p}\varphi _{0}\lrcorner \circ \chi_\mathcal{F} \wedge
\right) \alpha \\
&=&\left( \delta +0+\left( -1\right) ^{p}\varphi _{0}\lrcorner \circ \chi_\mathcal{F}
\wedge \right) \alpha =\delta \alpha ,
\end{eqnarray*}%
since $\varphi _{0}\lrcorner \left( \chi_\mathcal{F} \wedge \cdot \right) $ is always
zero on  basic one-forms.
\end{proof}

\section{Transverse formality}\label{TransvFormalitySection}

  In this section, we introduce the notion of transverse geometric formality of an ORFM both for taut and nontaut Riemannian foliations. The main problem in the nontaut case is the fact that the basic Hodge star operator does not commute with the basic Laplacian. An equivalent definition using the interior product of forms avoids this awkwardness. We investigate the basic cohomology of a transversely formal ORFM and their influence on the topology of the foliated manifold. Particular attention is paid to transversely Lie foliations. Then the relation between geometric formality and transverse formality is studied. Finally, it is shown that transverse formality is not a transverse property.  

\begin{definition}
Let $\left( M,\mathcal{F},g\right) $ be an ORFM. We say that $\left( M,%
\mathcal{F},g\right) $ is \textbf{transversely formal (or transversely
geometrically formal)} if the wedge product of any  basic harmonic form  and
basic $\kappa _{b}$-harmonic form is basic $\kappa _{b}$-harmonic.
\end{definition}

\begin{definition}
Let $\left( M,\mathcal{F},g\right) $ be  an  ORFM. We say that $\left( M,%
\mathcal{F},g\right) $ is \textbf{transversely }$r$-\textbf{formal} \textbf{%
(or transversely geometrically }$r$-\textbf{formal)} for $%
0\leq r\leq q$ if the wedge product of
any  basic harmonic  $r$-form and basic $\kappa _{b}$-harmonic $\left(
q-r\right) $-form is basic $\kappa _{b}$-harmonic.
\end{definition}

\begin{remark}
Note that in \cite{OrneaPilca2011RemProdHarmForms}, $r$-formality is the
condition that the wedge product of any two harmonic $r$-forms is harmonic.
In our foliation case, it turns out to be more convenient to use the
definition above, in light of Lemma \ref{constantInnerProdLemma}.
\end{remark}

We note that transverse formality and formality are distinct;   an ORFM   
may satisfy one property but not the other. See Section \ref%
{ExamplesSection}, which contains several interesting examples of   Riemannian  
foliations that satisfy transverse formality.

\subsection{General results concerning transverse formality}

In this   subsection,   we prove the fundamental properties of transversely formal
metrics.

\begin{lemma}
\label{constantInnerProdLemma}Suppose that $\left( M,\mathcal{F},g\right) $
is transversely $r$-formal. Then the pointwise inner product of any two
basic harmonic $r$-forms is constant. In particular, basic harmonic $r$%
-forms have constant length. The same is true for basic $\kappa _{b}$%
-harmonic $\left( q-r\right) $-forms.
\end{lemma}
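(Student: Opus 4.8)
The plan is to exploit the duality between basic harmonic forms and basic $\kappa_b$-harmonic forms supplied by the transversal Hodge star operator, which was noted in the excerpt to intertwine $\Delta_b$ and $\Delta_{\kappa_b}$. First I would take two basic harmonic $r$-forms $\alpha$ and $\beta$. Then $\overline{\ast}\beta$ is a basic $\kappa_b$-harmonic $(q-r)$-form, so by the transverse $r$-formality hypothesis the wedge product $\alpha\wedge\overline{\ast}\beta$ is a basic $\kappa_b$-harmonic $q$-form. The key point is to understand what a $\kappa_b$-harmonic $q$-form must be: since the transverse volume form $\nu$ generates $\Omega^q(M,\mathcal{F})$ over $C^\infty(M,\mathcal{F})$ and $\overline{\ast}\colon\Omega^0\to\Omega^q$ is an isomorphism, a $\kappa_b$-harmonic $q$-form is $\overline{\ast}$ of a basic harmonic $0$-form. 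But a basic harmonic function (in the $\Delta_b$ sense) is constant, by the maximum principle / the standard fact that $\dim H^0(M,\mathcal{F})=1$.

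Concretely, I would observe that the pointwise inner product $(\alpha,\beta)$ is a basic function, and that by the definition of the transversal star we have $\alpha\wedge\overline{\ast}\beta=(\alpha,\beta)\,\nu$. So I must show that $(\alpha,\beta)$ is constant. The cleanest route is: $\alpha\wedge\overline{\ast}\beta$ is $\kappa_b$-harmonic (from formality), hence equals $\overline{\ast}f$ for some basic harmonic function $f$ under the star isomorphism $\overline{\ast}\colon\Omega^0(M,\mathcal{F})\to\Omega^q(M,\mathcal{F})$; since $\overline{\ast}1=\nu$ and $\Delta_b f=0$ forces $f$ constant, we conclude $\alpha\wedge\overline{\ast}\beta=c\,\nu$ for a constant $c$. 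Comparing with $\alpha\wedge\overline{\ast}\beta=(\alpha,\beta)\,\nu$ gives $(\alpha,\beta)=c$, constant. Taking $\beta=\alpha$ yields that $|\alpha|^2$ is constant, i.e. basic harmonic $r$-forms have constant length. The statement for basic $\kappa_b$-harmonic $(q-r)$-forms follows symmetrically: if $\gamma,\eta$ are such forms, then $\overline{\ast}\gamma,\overline{\ast}\eta$ are basic harmonic $r$-forms, and one reduces to the case just treated, using that $\overline{\ast}$ is a pointwise isometry so it preserves the pointwise inner product.

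The main obstacle I anticipate is verifying cleanly that a basic $\kappa_b$-harmonic $q$-form is necessarily a constant multiple of $\nu$. One must justify that $\overline{\ast}$ carries basic harmonic $0$-forms isomorphically onto basic $\kappa_b$-harmonic $q$-forms — this is exactly the $r=0$ instance of the intertwining relations $\Delta_b\overline{\ast}=\overline{\ast}\Delta_{\kappa_b}$ and $\Delta_{\kappa_b}\overline{\ast}=\overline{\ast}\Delta_b$ already recorded in the preliminaries — together with the fact that the only basic harmonic functions are the constants, which holds because $M$ is closed and $\Delta_b$ restricted to functions is (up to the $\kappa_b$-correction) a nonnegative elliptic-type operator along the transverse directions whose kernel is one-dimensional. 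A minor subtlety worth a sentence is that the $\kappa_b$-harmonicity condition on a $q$-form $\overline{\ast}f$ reads $d_{\kappa_b}(\overline{\ast}f)=0$ automatically in top degree, so the content is purely $\delta_{\kappa_b}(\overline{\ast}f)=0$, which pulls back under $\overline{\ast}$ to $df=0$; I would spell this out rather than leave it implicit.
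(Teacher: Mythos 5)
Your proposal is correct and follows essentially the same route as the paper: use transverse $r$-formality to conclude that $\alpha\wedge\overline{\ast}\beta=(\alpha,\beta)\,\nu$ is basic $\kappa_b$-harmonic, identify the space of basic $\kappa_b$-harmonic $q$-forms with $\mathbb{R}\nu$, and handle the $\kappa_b$-harmonic case by applying the pointwise isometry $\overline{\ast}$. The only difference is cosmetic: the paper cites $H^{q}_{\kappa_b}(M,\mathcal{F})\cong H^{0}(M,\mathcal{F})\cong\mathbb{R}$ (with $\nu$ as the harmonic generator) directly, while you derive this one-dimensionality from the intertwining of $\Delta_{\kappa_b}$ with $\Delta_b$ on functions — the same mechanism, spelled out in slightly more detail.
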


\begin{proof}
If the $r$-forms $\alpha ,\beta $ are basic harmonic and $\nu $ is the
transverse volume $q$-form, then the pointwise inner product satisfies 
\begin{equation*}
\left( \alpha ,\beta \right) \nu =\alpha \wedge \overline{\ast }\beta ,
\end{equation*}%
and $\overline{\ast }\beta $ is basic $\kappa _{b}$-harmonic. Since   $%
H_{\kappa _{b}}^{q}\left( M,\mathcal{F}\right)  \cong H^{0}\left( M,\mathcal{F}\right)\cong\mathbb{R}$  is generated by the
basic $\kappa _{b}$-harmonic form $\nu $. Thus, $\left( \alpha ,\beta
\right) $ is constant (and also $\left( \overline{\ast }\alpha ,\overline{%
\ast }\beta \right) $ is constant).
\end{proof}

\begin{corollary}
\label{basicCohomologyLiimitationCorollary}Let  $\mathcal{F}$ 
be a codimension $q$ foliation of a compact manifold $M$ that admits a
transversely $r$-formal bundle-like metric $g$. Then the basic cohomology satisfies \newline$%
\dim H^{r}\left( M,\mathcal{F}\right) \leq \binom{q}{r}$ for $%
0\leq r\leq q$.
\end{corollary}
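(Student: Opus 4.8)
The plan is to use the previous lemma, which guarantees that the pointwise inner product of any two basic harmonic $r$-forms is constant. This turns the infinite-dimensional space of basic harmonic $r$-forms into a situation controlled by pointwise linear algebra. By the basic Hodge theorem, $\dim H^{r}(M,\mathcal{F})$ equals the dimension of the space $\mathcal{H}^{r}$ of basic harmonic $r$-forms, so it suffices to bound $\dim \mathcal{H}^{r}$.

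First I would choose an $L^{2}$-orthonormal basis $\alpha_{1},\dots,\alpha_{N}$ of $\mathcal{H}^{r}$, where $N=\dim H^{r}(M,\mathcal{F})$. By Lemma \ref{constantInnerProdLemma}, each pointwise inner product $(\alpha_{i},\alpha_{j})$ is a constant function on $M$. Integrating against the transverse volume form (more precisely, using $\langle \alpha_{i},\alpha_{j}\rangle = \int_{M}(\alpha_{i},\alpha_{j})\,\chi_{\mathcal{F}}\wedge(\text{vol factor})$) and normalizing the total transverse volume to $1$, the constant value of $(\alpha_{i},\alpha_{j})$ must equal $\delta_{ij}$. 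Hence the $\alpha_{i}$ are pointwise orthonormal at every point $x\in M$.

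Next I would evaluate everything at a single fixed point $x$. At $x$, the forms $\alpha_{1}(x),\dots,\alpha_{N}(x)$ are $N$ pointwise-orthonormal, hence linearly independent, elements of $\wedge^{r}Q_{x}^{\ast}$. Since $Q_{x}$ has rank $q$, the space $\wedge^{r}Q_{x}^{\ast}$ has dimension $\binom{q}{r}$. Therefore $N\leq \binom{q}{r}$, which is exactly the claimed bound. The argument is uniform in $r$ and covers the full range $0\leq r\leq q$.

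The main obstacle, such as it is, lies in the normalization bookkeeping: making sure the constant inner product is correctly pinned down to the Kronecker delta via the global $L^{2}$ pairing, and confirming that the pointwise inner product on basic forms genuinely takes values in $\wedge^{r}Q^{\ast}$ (rather than the full $\wedge^{r}T^{\ast}M$) so that the dimension count uses $q$ and not $m$. Both points follow from the definitions of basic forms and of the pointwise inner product recalled in Section \ref{PrelimSection}, so the proof should be short once the pointwise-orthonormality is established.
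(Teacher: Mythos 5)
Your proof is correct and follows essentially the same route as the paper: both rest on Lemma \ref{constantInnerProdLemma} to convert linear independence of basic harmonic $r$-forms into pointwise linear independence inside $\wedge^{r}Q_{x}^{\ast}$, whose dimension $\binom{q}{r}$ gives the bound. The paper states this in one line; your version merely spells out the Gram-matrix/orthonormality bookkeeping (and note the space of basic harmonic forms is already finite dimensional by the basic Hodge theorem, so no infinite-dimensional issue actually arises).
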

\begin{proof}
From the lemma above, for the transversely formal metric, the number of
linearly independent harmonic forms of degree  $r$ is at most the rank of $%
\wedge ^{r}Q^{\ast }=\binom{q}{r}$. 
\end{proof}

\begin{remark}
For ordinary $n$-dimensional closed manifolds, the existence of a
geometrically formal metric implies that $\dim H^{1}\left( M\right)
 \neq n-1$ (see \cite[Theorem 6]{Kotschick2001OnProductsHarmFms}).
However, for particular transversely formal foliations of codimension $q$, the   dimension   of
$H^{1}\left( M,\mathcal{F}\right)$ might be equal to $q-1$ (see Corollary \ref{basicfirstCohomologynontautCorollary}); the
proof does not generalize because of the interaction with the mean
curvature. See for example Section \ref{Carriere} for an example of a
codimension $2$ foliation that has a transversely formal metric where $\dim H^{1}\left( M,\mathcal{F}\right)=1$. If, however, the foliation is taut,
we do have $\dim H^{1}\left( M,\mathcal{F}\right)  \neq q-1$
(see Corollary \ref{q-1Corollary}).
\end{remark}

 As equivalent definition of transverse formality is given in the next proposition:
\begin{proposition}
The Riemannian foliated manifold $\left( M,\mathcal{F},g\right) $ is
transversely formal if and only if the interior product of any two basic
harmonic forms is basic harmonic. The same is true for two basic $\kappa
_{b} $-harmonic forms.
\end{proposition}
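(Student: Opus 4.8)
The plan is to reduce the whole statement to a single pointwise identity relating interior and wedge products through the basic Hodge star, combined with the duality $\overline{\ast}\,\Delta_{\kappa_b}=\Delta_b\,\overline{\ast}$ recorded in the Preliminaries. The key algebraic fact is the transverse analogue of the classical Hodge identity: for basic forms $\alpha,\beta$ with $\deg\alpha\le\deg\beta$ one has
\[
\alpha\lrcorner\beta=\varepsilon\,\overline{\ast}\,(\alpha\wedge\overline{\ast}\beta),
\]
for a sign $\varepsilon=\pm1$ depending only on $q$ and the degrees; equivalently $\overline{\ast}(\alpha\wedge\eta)=\pm\,\alpha\lrcorner\overline{\ast}\eta$ whenever $\deg\alpha+\deg\eta\le q$. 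This is the degree-$(\deg\beta-\deg\alpha)$ generalisation of the pointwise inner product formula $(\alpha,\beta)=\overline{\ast}(\alpha\wedge\overline{\ast}\beta)$ recalled above. I would first establish (or simply cite) this identity; it is purely pointwise and algebraic, so the mean curvature plays no role in it, and no separate degree subcases arise because both sides vanish once $\deg\alpha+\deg\eta>q$.

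For the forward direction, assume transverse formality and let $\alpha,\beta$ be basic harmonic with $\deg\alpha\le\deg\beta$. Then $\overline{\ast}\beta$ is basic $\kappa_b$-harmonic, so by transverse formality $\alpha\wedge\overline{\ast}\beta$ is basic $\kappa_b$-harmonic; applying $\overline{\ast}$ and using that $\overline{\ast}$ sends basic $\kappa_b$-harmonic forms to basic harmonic forms, the identity yields that $\alpha\lrcorner\beta$ is basic harmonic. For the converse, assume that the interior product of any two basic harmonic forms is basic harmonic, and let $\alpha$ be basic harmonic and $\eta$ basic $\kappa_b$-harmonic with $\deg\alpha+\deg\eta\le q$ (otherwise $\alpha\wedge\eta=0$ is trivially $\kappa_b$-harmonic). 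Then $\overline{\ast}\eta$ is basic harmonic, the identity gives $\overline{\ast}(\alpha\wedge\eta)=\pm\,\alpha\lrcorner\overline{\ast}\eta$, which is basic harmonic by hypothesis; since $\overline{\ast}$ is invertible and sends basic harmonic forms to basic $\kappa_b$-harmonic forms, $\alpha\wedge\eta$ is basic $\kappa_b$-harmonic, i.e. transverse formality holds.

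The second assertion is handled by exactly the same mechanism with the roles of $\Delta_b$ and $\Delta_{\kappa_b}$ interchanged: if $\phi,\psi$ are basic $\kappa_b$-harmonic with $\deg\phi\le\deg\psi$, then $\overline{\ast}\psi$ is basic harmonic, transverse formality makes $\phi\wedge\overline{\ast}\psi$ basic $\kappa_b$-harmonic, and the identity then gives that $\phi\lrcorner\psi$ is basic harmonic, with the converse identical. I expect the main (and essentially the only) subtlety to be bookkeeping: pinning down the exact sign $\varepsilon$ and always contracting the lower-degree factor into the higher-degree one, so the identity is invoked only in its valid range. Everything else is a direct transport of the classical formality argument through the $\overline{\ast}$-duality between the basic and twisted Hodge theories.
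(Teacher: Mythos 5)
Your proof is correct and follows essentially the same route as the paper's: both rest on the pointwise identity $\alpha\lrcorner\beta=\pm\,\overline{\ast}\left(\alpha\wedge\overline{\ast}\beta\right)$ together with the fact that $\overline{\ast}$ interchanges basic harmonic and basic $\kappa_b$-harmonic forms (via $\overline{\ast}\,\Delta_{\kappa_b}=\Delta_b\,\overline{\ast}$), applied in both directions and to both assertions. Your additional bookkeeping about degrees and the trivial case $\deg\alpha+\deg\eta>q$ is sound but inessential; the paper simply absorbs it into the $\pm$ sign.
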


\begin{proof}
Suppose that $\left( M,\mathcal{F},g\right) $ is transversely formal. If $%
\alpha ,\beta $ are two basic harmonic forms, then $\overline{\ast }\beta $
is basic $\kappa _{b}$-harmonic, so by transverse formality $\alpha \wedge 
\overline{\ast }\beta $ is basic $\kappa _{b}$-harmonic. Thus, $\alpha
\lrcorner \beta =\pm \overline{\ast }\left( \alpha \wedge \overline{\ast }%
\beta \right) $ is basic harmonic. A similar proof works for two basic $%
\kappa _{b}$-harmonic forms.\newline
Conversely, suppose that $\left( M,\mathcal{F},g\right) $ has the required
property, and suppose that $\alpha $ is basic harmonic and $\beta $ is basic 
$\kappa _{b}$-harmonic. Then $\overline{\ast }\beta $ is basic harmonic, and
thus $\alpha \lrcorner \overline{\ast }\beta =\pm \overline{\ast }\left(
\alpha \wedge \beta \right) $ is basic harmonic, so that $\alpha \wedge
\beta $ is basic $\kappa _{b}$-harmonic. A similar proof works for the $%
\kappa _{b}$-harmonic case.
\end{proof}

In the following, we will study the case where the foliation is transversaly formal and that   $\dim  H^r(M,\mathcal{F})=\binom{q}{r}$   for some $r$. We will see that this is a restriction on the geometry of the foliation. For this, we need several lemmas:



\begin{lemma}
\label{nablaFactsProp}Suppose that $\left( M,\mathcal{F},g\right) $ is
transversely formal, and $\kappa $ is basic harmonic. If $\alpha $ is basic harmonic $r$-form, then 
$$\nabla _{\kappa ^{\#}}\alpha+\sum_{i=1}^q\nabla _{e_{i}}\kappa \wedge \left( e_{i}\lrcorner \alpha \right) =0,$$
where $\left\{e_{i}\right\} _{i=1,\ldots,q}$ be a local orthonormal frame of $\Gamma(Q)$.



\end{lemma}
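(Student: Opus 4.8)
The plan is to deduce the identity from the single fact that the contraction $\kappa^{\#}\lrcorner\alpha$ is closed, which is exactly where transverse formality enters. Since $\kappa$ and $\alpha$ are both basic harmonic, the equivalent characterization of transverse formality proved above (the interior product of two basic harmonic forms is basic harmonic) shows that $\kappa^{\#}\lrcorner\alpha=\kappa\lrcorner\alpha$ is basic harmonic; in particular $d(\kappa^{\#}\lrcorner\alpha)=0$. I would then compute this vanishing exterior derivative explicitly through the transverse Levi-Civita connection $\nabla$ and match it against the two terms in the statement.

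Concretely, I would start from the standard transverse identity $d\beta=\sum_{i}e^{i}\wedge\nabla_{e_{i}}\beta$, valid for every basic form $\beta$ because $\nabla$ is torsion-free. Applying it to $\beta=\kappa^{\#}\lrcorner\alpha$ and using the Leibniz rule $\nabla_{e_{i}}(\kappa^{\#}\lrcorner\alpha)=(\nabla_{e_{i}}\kappa^{\#})\lrcorner\alpha+\kappa^{\#}\lrcorner\nabla_{e_{i}}\alpha$ gives
$$0=d(\kappa^{\#}\lrcorner\alpha)=\sum_{i}e^{i}\wedge\big((\nabla_{e_{i}}\kappa^{\#})\lrcorner\alpha\big)+\sum_{i}e^{i}\wedge\big(\kappa^{\#}\lrcorner\nabla_{e_{i}}\alpha\big).$$
The second sum I would simplify with the anticommutation rule $e^{i}\wedge(\kappa^{\#}\lrcorner\beta)=\kappa(e_{i})\,\beta-\kappa^{\#}\lrcorner(e^{i}\wedge\beta)$, which collapses it to $\sum_{i}\kappa(e_{i})\nabla_{e_{i}}\alpha-\kappa^{\#}\lrcorner\big(\sum_{i}e^{i}\wedge\nabla_{e_{i}}\alpha\big)=\nabla_{\kappa^{\#}}\alpha-\kappa^{\#}\lrcorner(d\alpha)=\nabla_{\kappa^{\#}}\alpha$, using $\sum_{i}\kappa(e_{i})e_{i}=\kappa^{\#}$ and $d\alpha=0$.

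For the first sum I would write $\nabla_{e_{i}}\kappa^{\#}=\sum_{j}h_{ij}e_{j}$ with $h_{ij}=g(\nabla_{e_{i}}\kappa^{\#},e_{j})$, so that $\sum_{i}e^{i}\wedge((\nabla_{e_{i}}\kappa^{\#})\lrcorner\alpha)=\sum_{i,j}h_{ij}\,e^{i}\wedge(e_{j}\lrcorner\alpha)$, whereas the target term is $\sum_{i}\nabla_{e_{i}}\kappa\wedge(e_{i}\lrcorner\alpha)=\sum_{i,j}h_{ij}\,e^{j}\wedge(e_{i}\lrcorner\alpha)$. These two expressions coincide exactly when $h_{ij}$ is symmetric, and $h_{ij}-h_{ji}=d\kappa(e_{i},e_{j})=0$ because $\kappa$ is closed (being basic harmonic). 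This symmetrization, resting on $d\kappa=0$, is the one genuine point to get right; everything else is bookkeeping. Substituting both evaluations back into the displayed equation yields $\nabla_{\kappa^{\#}}\alpha+\sum_{i}\nabla_{e_{i}}\kappa\wedge(e_{i}\lrcorner\alpha)=0$, as claimed.

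Conceptually, the same computation identifies the left-hand side with the transverse Lie derivative $\mathcal{L}_{\kappa^{\#}}\alpha$, which vanishes by Cartan's formula since $d\alpha=0$ and $\kappa^{\#}\lrcorner\alpha$ is closed; I would nonetheless keep the direct computation as the actual argument, so as to avoid discussing projectability of $\kappa^{\#}$ and the passage to the transverse model.
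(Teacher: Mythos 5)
Your proof is correct, and it rests on the same two essential inputs as the paper's: transverse formality (through the equivalent characterization via interior products) to conclude that $\kappa^{\#}\lrcorner\alpha$ is basic harmonic and hence closed, and the closedness of $\kappa$ to obtain the symmetry of $h_{ij}=g(\nabla_{e_{i}}\kappa^{\#},e_{j})$. The difference lies in the middle step. The paper passes through the ambient geometry: Cartan's formula gives $\mathcal{L}_{\kappa^{\#}}\alpha=0$, then the decomposition $\mathcal{L}_{X}=\nabla^{M}_{X}+S^{[r]}_{X}$ with $S=\nabla^{M}\kappa$ is invoked, and the resulting identity is finally restricted to sections of $Q$, where the leafwise part of $S^{[r]}_{\kappa^{\#}}$ drops out because $\alpha$ is basic. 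You instead stay entirely transverse: you expand $d(\kappa^{\#}\lrcorner\alpha)$ with the torsion-free identity $d\beta=\sum_{i}e^{i}\wedge\nabla_{e_{i}}\beta$ on basic forms, apply the Leibniz rule and the antiderivation property of the interior product, and eliminate the extra term using $d\alpha=0$. What your route buys is economy: no ambient Levi-Civita connection, no extension $S^{[r]}_{X}$, and no argument about which terms survive the restriction to $Q$; everything happens inside $\Lambda^{\ast}Q^{\ast}$. What the paper's route buys is the intermediate statement $\mathcal{L}_{\kappa^{\#}}\alpha=0$, which displays the geometric content (invariance of basic harmonic forms under the flow of $\kappa^{\#}$) at a glance. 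Since you yourself note this identification at the end, the two arguments are at bottom the same computation organized differently, with yours being, if anything, slightly easier to verify line by line.
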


\begin{proof}
With the assumptions, we have that $d\alpha =0$ and $\kappa^{\#} \lrcorner \alpha $
is basic harmonic by Proposition \ref{nablaFactsProp}, thus it is also closed. Then 
\begin{eqnarray*}
\mathcal{L}_{\kappa ^{\#}}\alpha =d\left( \kappa^{\#} \lrcorner \alpha \right)
+\kappa^{\#} \lrcorner \left( d\alpha \right)=0.
\end{eqnarray*}
Recall the Lie derivative along any vector field $X$ on differential $r$-forms can be related to the Levi-Civita connection on $M$ 
by the formula $\mathcal{L}_{X}=\nabla^M_X+S_X^{[r]}$ where $S_X^{[r]}$ is the canonical extension of the endomorphism $S=\nabla^M X$ to differential forms defined by 
$$(S_X^{[r]}\omega)(X_1,\dots,X_r)=\sum_{j=1}^r\omega(X_1,\dots,\nabla^M_{X_j}X,\dots, X_r)$$
for any differential $r$-form $\omega$ and $X_1,\ldots, X_r\in TM$. Equivalently, this extension is equal to $$S_X^{[r]}=\sum_{l=1}^pf_l\wedge S(f_l)\lrcorner+\sum_{i=1}^q e_i\wedge S(e_i)\lrcorner,$$ 
where $\{f_l\}_{l=1,\ldots,p}$ is a local orthonormal frame of $T\mathcal{F}$ and $\{e_i\}_{i=1,\ldots, q}$ is a local orthornormal frame of $Q$.
Hence for $X=\kappa ^{\#}$, we get that $\nabla^M_{\kappa ^{\#}}\alpha+S_{\kappa ^{\#}}^{[r]}\alpha=0$. Applying this last identity to sections in $Q$, we deduce the required identity, since the form $\alpha$ is basic and $S=\nabla^M\kappa$ is a symmetric endomorphism as $\kappa$ is closed. 
\end{proof}

\begin{lemma}
\label{nablaKappaThm}Suppose that the ORFM $\left( M,\mathcal{F}%
,g\right) $ is transversely formal, and suppose that the mean curvature form 
$\kappa $ is basic harmonic. Then for any two basic harmonic forms $\alpha $%
, $\beta $, the pointwise inner product $\left( \nabla _{\kappa ^{\#}}\alpha
,\beta \right) $ is identically zero. The same fact is true for two basic $%
\kappa $-harmonic forms.
\end{lemma}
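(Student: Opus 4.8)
The plan is to pin down the bilinear quantity $(\nabla_{\kappa^\#}\alpha,\beta)$ by showing it is simultaneously antisymmetric and symmetric in $\alpha,\beta$, which forces it to vanish. Since $\nabla_{\kappa^\#}$ preserves the degree of a form, I may assume $\alpha$ and $\beta$ are both basic harmonic $r$-forms, for otherwise $(\nabla_{\kappa^\#}\alpha,\beta)=0$ trivially. Transverse formality implies transverse $r$-formality, so Lemma~\ref{constantInnerProdLemma} gives that the pointwise inner product $(\alpha,\beta)$ is constant on $M$. As the transverse Levi-Civita connection is metric, differentiating this constant in the direction $\kappa^\#\in\Gamma(Q)$ yields
\[
0=\kappa^\#\cdot(\alpha,\beta)=(\nabla_{\kappa^\#}\alpha,\beta)+(\alpha,\nabla_{\kappa^\#}\beta),
\]
i.e. the \emph{antisymmetry} $(\nabla_{\kappa^\#}\alpha,\beta)=-(\alpha,\nabla_{\kappa^\#}\beta)$.

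Next I would invoke Lemma~\ref{nablaFactsProp}, which rewrites $\nabla_{\kappa^\#}\alpha=-T\alpha$ with $T\alpha:=\sum_{i}\nabla_{e_i}\kappa\wedge(e_i\lrcorner\alpha)$. The decisive point is that $T$ is pointwise self-adjoint: writing $\nabla_{e_i}\kappa=\sum_j S_{ij}\,e_j^\flat$, the matrix $S_{ij}=(\nabla_{e_i}\kappa)(e_j)$ is symmetric because $\kappa$ is closed and $\nabla$ is torsion-free, while exterior multiplication by $e_j^\flat$ is adjoint to interior multiplication by $e_i$; hence $T^\ast=T$. Therefore
\[
(\nabla_{\kappa^\#}\alpha,\beta)=-(T\alpha,\beta)=-(\alpha,T\beta)=(\alpha,\nabla_{\kappa^\#}\beta),
\]
which is the \emph{symmetry}. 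Comparing this with the antisymmetry above forces $(\nabla_{\kappa^\#}\alpha,\beta)=(\alpha,\nabla_{\kappa^\#}\beta)=0$, establishing the first assertion.

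For two basic $\kappa$-harmonic forms $\gamma,\eta$ (again of equal degree, without loss of generality), I would reduce to the case just proved by means of the basic Hodge star. Because $\overline{\ast}$ is a pointwise isometry carrying basic $\kappa_b$-harmonic forms to basic harmonic forms, and because the metric connection $\nabla$ commutes with $\overline{\ast}$ — the latter being assembled from the parallel data $g_Q$ and the transverse volume form — I obtain
\[
(\nabla_{\kappa^\#}\gamma,\eta)=(\overline{\ast}\nabla_{\kappa^\#}\gamma,\overline{\ast}\eta)=(\nabla_{\kappa^\#}\overline{\ast}\gamma,\overline{\ast}\eta)=0,
\]
the final equality being the first assertion applied to the basic harmonic forms $\overline{\ast}\gamma,\overline{\ast}\eta$. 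The one delicate step throughout is the observation underlying the symmetry of $T$: the naive antisymmetry from constancy of $(\alpha,\beta)$ is not enough on its own, and it is the closedness of $\kappa$ (equivalently, the symmetry of its transverse Hessian) that supplies the complementary symmetry and collapses the expression to zero.
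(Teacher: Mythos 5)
Your proof is correct and follows essentially the same route as the paper's: both arguments combine Lemma~\ref{nablaFactsProp} with the symmetry of $\nabla\kappa$ (coming from $d\kappa=0$) to obtain the symmetry $\left( \nabla _{\kappa ^{\#}}\alpha ,\beta \right) =\left( \alpha ,\nabla _{\kappa^{\#}}\beta \right)$, play it against the antisymmetry coming from the constancy of $\left( \alpha ,\beta \right)$ guaranteed by Lemma~\ref{constantInnerProdLemma}, and handle the $\kappa$-harmonic case via the transversal isometry $\overline{\ast }$. Your packaging of the paper's two-equation subtraction as pointwise self-adjointness of the operator $T\alpha =\sum_{i}\nabla _{e_{i}}\kappa \wedge \left( e_{i}\lrcorner \alpha \right)$, and your explicit remark that $\nabla$ commutes with $\overline{\ast }$, are only presentational refinements of the same argument.
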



\begin{proof} Let $\alpha$ and $\beta$ be two basic harmonic forms. From Lemma \ref{nablaFactsProp}, we have that
\begin{equation*}
\nabla _{\kappa ^{\#}}\alpha +\sum_{i=1}^q\nabla _{e_{i}}\kappa \wedge e_{i}\lrcorner
\alpha =0.
\end{equation*}
We take the pointwise inner product with $\beta $ to get%
\begin{equation}
\left( \nabla _{\kappa ^{\#}}\alpha ,\beta \right) +\sum_{i=1}^q\left( e_{i}\lrcorner
\alpha ,\nabla _{e_{i}}\kappa \lrcorner \beta \right) =0.  \label{eq1}
\end{equation}%
Switching the roles of $\alpha $ and $\beta $, we get%
\begin{equation}
\left( \nabla _{\kappa ^{\#}}\beta ,\alpha \right) +\sum_{i=1}^q\left( e_{i}\lrcorner
\beta ,\nabla _{e_{i}}\kappa \lrcorner \alpha \right) =0.  \label{eq2}
\end{equation}%
Note that $\sum_{i=1}^q\left( e_{i}\lrcorner \alpha ,\nabla _{e_{i}}\kappa \lrcorner
\beta \right) =\sum_{i=1}^q\left( e_{i}\lrcorner \beta ,\nabla _{e_{i}}\kappa \lrcorner
\alpha \right) $, because $\nabla \kappa $ is a symmetric two-tensor as $%
\kappa $ is closed. 
Thus, substracting (\ref{eq1}) and (\ref{eq2}), we get 
\begin{equation*}
\left( \nabla _{\kappa ^{\#}}\alpha ,\beta \right) =\left( \nabla _{\kappa
^{\#}}\beta ,\alpha \right) .
\end{equation*}%
But, since $\left( \alpha ,\beta \right) $ is constant as the metric is transversely formal, we get that 
$$0=\kappa
^{\#}\left( \alpha ,\beta \right) =\left( \nabla _{\kappa ^{\#}}\alpha
,\beta \right) +\left( \alpha ,\nabla _{\kappa ^{\#}}\beta \right),$$ 
so that $\left( \nabla _{\kappa ^{\#}}\alpha ,\beta \right) =0$ which is the statement of the lemma. The last part follows from the fact that $\overline{\ast }$ maps the space of basic $\kappa $-harmonic forms
isometrically to the space of basic harmonic forms  and that $%
\overline{\ast }$ is a transversal isometry. 
\end{proof}

\begin{theorem}
\label{maxRankImpliesMinimalThm}Suppose that the ORFM $\left( M,\mathcal{F}%
,g\right) $ is transversely formal and has basic harmonic mean curvature. If 
$\dim  H^{r}\left( M,\mathcal{F}\right)  =\binom{q}{r}$ for
some $r$ with $0<r\leq q$, then the foliation is minimal.

\begin{proof}
If $r=q$, $\dim  H^{r}\left( M,\mathcal{F}\right)  =1$, so the
foliation is taut. Since the mean curvature is basic harmonic, it must be
zero. Now suppose $\dim  H^{r}\left( M,\mathcal{F}\right) =%
\binom{q}{r}$ for some $r$ with $0<r<q$. By Lemma \ref{nablaKappaThm}, $%
\left( \nabla _{\kappa ^{\#}}\alpha ,\beta \right) =0$ for all basic
harmonic $r$-forms $\alpha $, $\beta $. Since $\dim  H^{r}\left( M,%
\mathcal{F}\right)  =\binom{q}{r}=\mathrm{rank}\wedge ^{r}Q^{\ast }$,
then $\nabla _{\kappa ^{\#}}\alpha =0$. Here we are using the fact that the
pointwise inner product of basic harmonic $k$-forms is constant, so that if
a set of forms is linearly dependent at one point, then it is linearly
dependent globally. Now Lemma \ref{nablaFactsProp} gives that $\sum_{i=1}^q\nabla _{e_{i}}\kappa \wedge
e_{i}\lrcorner \alpha =0$ for all basic harmonic $r$-forms $\alpha $, which
means in fact that $\sum_{i=1}^q\nabla _{e_{i}}\kappa \wedge e_{i}\lrcorner \alpha =0$
for all $\alpha \in \wedge ^{r}Q^{\ast }$. Since the extension  $S_\kappa^{[r]}$  of the symmetric endomorphism $S=\nabla\kappa $ is identically zero, the endomorphism $S$ must itself be zero which implies $\nabla \kappa =0$. But then 
\begin{eqnarray*}
0 &=&\delta _{b}\kappa =-e_{i}\lrcorner \nabla _{e_{i}}\kappa +\left\vert \kappa \right\vert
^{2}=\left\vert \kappa \right\vert ^{2}
\end{eqnarray*}%
Then $\kappa $ is identically zero.
\end{proof}
\end{theorem}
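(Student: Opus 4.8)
The plan is to split on degree and then reduce the middle-degree case to a pointwise eigenvalue computation for the symmetric endomorphism $S=\nabla\kappa$. First I would handle $r=q$: here $\dim H^{q}(M,\mathcal{F})=\binom{q}{q}=1\neq 0$, so the top basic cohomology is nonzero and the foliation is taut by the characterization recalled in Section \ref{PrelimSection} (equivalently $[\kappa_{b}]=0$). Since $\kappa=\kappa_{b}$ is assumed basic harmonic, it is the unique basic harmonic representative of the trivial class, hence $\kappa=0$ and $\mathcal{F}$ is minimal.

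For $0<r<q$ the idea is to use that equality holds in Corollary \ref{basicCohomologyLiimitationCorollary}. Because $\dim H^{r}(M,\mathcal{F})=\binom{q}{r}=\mathrm{rank}\,\wedge^{r}Q^{\ast}$ and each class has a harmonic representative, the space of basic harmonic $r$-forms is as large as possible. By Lemma \ref{constantInnerProdLemma} the pointwise inner products of basic harmonic $r$-forms are constant, so a family that is linearly independent at one point is independent at every point; therefore the basic harmonic $r$-forms span the full fiber $\wedge^{r}Q_{x}^{\ast}$ at each $x$. Now Lemma \ref{nablaKappaThm} gives $(\nabla_{\kappa^{\#}}\alpha,\beta)=0$ for all basic harmonic $r$-forms $\alpha,\beta$; fixing $\alpha$ and letting $\beta$ run over a pointwise basis, nondegeneracy of the inner product forces $\nabla_{\kappa^{\#}}\alpha=0$ for every basic harmonic $\alpha$. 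Feeding this back into Lemma \ref{nablaFactsProp} yields $\sum_{i}\nabla_{e_{i}}\kappa\wedge(e_{i}\lrcorner\alpha)=0$ for all basic harmonic $\alpha$, and hence, by the spanning property, for all $\alpha\in\wedge^{r}Q^{\ast}$.

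The crux is the resulting algebraic statement: the operator $\alpha\mapsto\sum_{i}\nabla_{e_{i}}\kappa\wedge(e_{i}\lrcorner\alpha)$ is exactly the degree-$r$ derivation extension $S^{[r]}$ of the symmetric endomorphism $S=\nabla\kappa$ of $Q$, and we have shown $S^{[r]}=0$. Diagonalizing $S$ in an orthonormal eigenbasis with eigenvalues $\lambda_{1},\dots,\lambda_{q}$, the eigenvalue of $S^{[r]}$ on $e_{i_{1}}\wedge\cdots\wedge e_{i_{r}}$ is $\lambda_{i_{1}}+\cdots+\lambda_{i_{r}}$; requiring all such $r$-fold sums to vanish with $0<r<q$ and comparing two index sets that differ in a single slot forces $\lambda_{i}=\lambda_{j}$ for all $i,j$, and then $r\lambda=0$ gives $\lambda=0$, so $S=\nabla\kappa=0$. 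I expect this step — together with the justification that the harmonic forms genuinely span each fiber — to be the main obstacle, since it is precisely where the hypotheses $0<r<q$ and maximal cohomology are used; in top degree the same sums give only $\mathrm{tr}\,S=0$, which is too weak, explaining why $r=q$ needed the separate tautness argument.

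Finally, with $\nabla\kappa=0$ a divergence formula for the basic codifferential of the one-form $\kappa$ reduces to $\delta_{b}\kappa=-\sum_{i}e_{i}\lrcorner\nabla_{e_{i}}\kappa+|\kappa|^{2}=|\kappa|^{2}$; since $\kappa$ is basic harmonic we have $\delta_{b}\kappa=0$, whence $|\kappa|^{2}\equiv 0$ and $\kappa=0$, so the foliation is minimal.
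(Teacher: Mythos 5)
Your proposal is correct and follows essentially the same route as the paper: the same split into the case $r=q$ (handled via tautness and basic harmonicity of $\kappa$) and $0<r<q$, the same use of Lemmas \ref{constantInnerProdLemma}, \ref{nablaKappaThm} and \ref{nablaFactsProp} to deduce that the derivation extension $S_{\kappa}^{[r]}$ of $S=\nabla\kappa$ vanishes, and the same concluding computation $0=\delta_{b}\kappa=|\kappa|^{2}$. The only difference is that you supply the eigenvalue argument proving $S^{[r]}=0\Rightarrow S=0$ when $0<r<q$ (a step the paper merely asserts), and you correctly note that this implication fails at $r=q$, where one only gets $\operatorname{tr}S=0$ --- which is precisely why the top-degree case needs the separate tautness argument.
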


As a direct consequence, we get the following result  
\begin{corollary} \label{basicfirstCohomologynontautCorollary} Suppose that the ORFM $\left( M,\mathcal{F}%
,g\right) $ is transversely formal and has basic harmonic mean curvature. If the foliation is nontaut, then $1\leq {\rm dim}H^1(M,\mathcal{F})\leq q-1$.
\end{corollary}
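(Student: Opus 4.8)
The plan is to obtain both inequalities by combining the dimension bound of Corollary \ref{basicCohomologyLiimitationCorollary} with the minimality criterion of Theorem \ref{maxRankImpliesMinimalThm} and the cohomological characterization of tautness recalled in Section \ref{PrelimSection}; essentially this statement just packages those earlier results.

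First I would establish the upper bound. Applying Corollary \ref{basicCohomologyLiimitationCorollary} with $r=1$ gives $\dim H^{1}(M,\mathcal{F}) \leq \binom{q}{1} = q$, so it suffices to exclude equality. Suppose toward a contradiction that $\dim H^{1}(M,\mathcal{F}) = q = \binom{q}{1}$. Since the metric is transversely formal with basic harmonic mean curvature, Theorem \ref{maxRankImpliesMinimalThm} (taken with $r=1$) forces the foliation to be minimal, i.e. $\kappa = 0$. A minimal foliation has $\kappa_{b} = P_{b}\kappa = 0$, hence $[\kappa_{b}] = 0$, and is therefore taut, contradicting the hypothesis that $\mathcal{F}$ is nontaut. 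Thus $\dim H^{1}(M,\mathcal{F}) \leq q-1$.

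For the lower bound I would use the tautness characterization directly. Because $\mathcal{F}$ is nontaut, $[\kappa_{b}] \neq 0$ in $H^{1}(M,\mathcal{F})$; as the mean curvature is basic harmonic, $\kappa = \kappa_{b}$ is itself a nonzero basic harmonic one-form representing this class. Hence $H^{1}(M,\mathcal{F})$ contains a nonzero element, giving $\dim H^{1}(M,\mathcal{F}) \geq 1$.

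I do not expect a genuine obstacle here: the only point requiring slight care is making the contradiction in the upper bound clean, namely checking that the minimality produced by Theorem \ref{maxRankImpliesMinimalThm} really yields $[\kappa_{b}] = 0$ and hence tautness. This is immediate once $\kappa = 0$, so both bounds follow at once.
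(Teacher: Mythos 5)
Your proof is correct and takes essentially the same route as the paper's: the lower bound comes from nontautness forcing $[\kappa_b]\neq 0$ in $H^1(M,\mathcal{F})$, the upper bound $\leq q$ comes from Corollary \ref{basicCohomologyLiimitationCorollary} with $r=1$, and equality is excluded because Theorem \ref{maxRankImpliesMinimalThm} would then force minimality, hence tautness, a contradiction. Your write-up merely spells out in more detail the step the paper summarizes as ``the mean curvature cannot be zero.''
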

\begin{proof} 
Since the foliation is nontaut and the mean curvature is basic harmonic, we get that ${\rm dim}H^1(M,\mathcal{F})\geq 1$. Now, Corollary \ref{basicCohomologyLiimitationCorollary} gives that ${\rm dim}H^1(M,\mathcal{F})\leq q$. Since again the mean curvature cannot be zero, Theorem \ref{maxRankImpliesMinimalThm} implies that ${\rm dim}H^1(M,\mathcal{F})<q$. Hence, we deduce the statement.
\end{proof}

\begin{proposition}
Let $\left( M,\mathcal{F},g\right) $ be an ORFM with basic mean curvature
such that there exists a basic harmonic one-form $\alpha $ that has unit
length. Then there exists a codimension one minimal Riemannian foliation $%
\mathcal{F}_{\alpha }$ on $M$ such that $\mathcal{F}$ saturates the leaves
of $\mathcal{F}_{\alpha }$ and the universal cover $\left( \widetilde{M},%
\widetilde{\mathcal{F}_{\alpha }},\widetilde{g}\right) $ is a product
codimension one Riemannian foliation.
\end{proposition}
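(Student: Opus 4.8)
The plan is to take $\mathcal{F}_{\alpha}$ to be the foliation tangent to $\ker\alpha$ and to read off all of the required properties from the two facts that $\alpha$ is closed and that $\alpha$ has constant unit length. First I would note that, since $\alpha$ is basic harmonic it is in particular closed, so $\alpha\wedge d\alpha=0$ and Frobenius' theorem shows that the distribution $\ker\alpha\subset TM$ is integrable; because $\alpha$ is nowhere zero (it has unit length) this distribution has constant rank $m-1$ and integrates to a codimension one foliation $\mathcal{F}_{\alpha}$. As $\alpha$ is basic, every $X\in\Gamma(T\mathcal{F})$ satisfies $X\lrcorner\alpha=0$, i.e. $T\mathcal{F}\subseteq\ker\alpha=T\mathcal{F}_{\alpha}$; hence each leaf of $\mathcal{F}_{\alpha}$ is a union of leaves of $\mathcal{F}$, which is exactly the statement that $\mathcal{F}$ saturates the leaves of $\mathcal{F}_{\alpha}$.

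Next I would extract the geometric properties from the unit normal $N:=\alpha^{\#}$, which spans the orthogonal complement of $T\mathcal{F}_{\alpha}$. Since $\alpha$ is closed, $\nabla\alpha$ is a symmetric $2$-tensor, and evaluating on $N$ together with $|N|^{2}\equiv1$ gives $(\nabla_{X}\alpha)(N)=-g(N,\nabla_{X}N)=-\tfrac12\,X|N|^{2}=0$ for all $X$; by symmetry $(\nabla_{N}\alpha)(X)=0$, so $\nabla_{N}N=0$ and $N$ is a geodesic field. Consequently the integral curves of $N$ are precisely the geodesics orthogonal to the leaves of $\mathcal{F}_{\alpha}$, and they remain orthogonal to every leaf they meet since $N$ is everywhere orthogonal to $T\mathcal{F}_{\alpha}$; Reinhart's geodesic criterion \cite{Reinhart1959FolMfldsBundleLikeMetrics} then shows that $g$ is bundle-like for $\mathcal{F}_{\alpha}$, so $\mathcal{F}_{\alpha}$ is Riemannian. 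For minimality I would compute the mean curvature of the leaves: with a local orthonormal frame $\{X_{i}\}$ of $T\mathcal{F}_{\alpha}$ completed by $N$, the trace of the second fundamental form equals $\sum_{i}g(\nabla_{X_{i}}N,X_{i})=\operatorname{div}(N)$ (the $N$-term vanishes because $|N|\equiv1$), and $\operatorname{div}(N)=-\delta\alpha$. By Lemma \ref{basicHarmonicOneFormsAreHarmonicLemma}, since $(M,\mathcal{F},g)$ has basic mean curvature the basic harmonic one-form $\alpha$ is harmonic on $(M,g)$, whence $\delta\alpha=0$; thus the leaves are minimal and $\mathcal{F}_{\alpha}$ is a minimal Riemannian foliation.

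Finally I would pass to the universal cover $\pi\colon\widetilde{M}\to M$ and pull back every object. The form $\widetilde{\alpha}:=\pi^{*}\alpha$ is closed on the simply connected $\widetilde{M}$, hence exact: $\widetilde{\alpha}=df$ for some $f\colon\widetilde{M}\to\mathbb{R}$, and $|df|\equiv1$ forces $f$ to have no critical points, with $\widetilde{N}:=\nabla f$ a unit geodesic field that is complete (being a unit-speed geodesic field on the complete manifold $\widetilde{M}$). Since $\tfrac{d}{dt}f(\phi_{t}(x))=df(\widetilde{N})=1$, the flow $\phi_{t}$ of $\widetilde{N}$ satisfies $f(\phi_{t}(x))=f(x)+t$, and the map $(x,t)\mapsto\phi_{t}(x)$ is a diffeomorphism $L_{0}\times\mathbb{R}\to\widetilde{M}$, where $L_{0}=f^{-1}(0)$: surjectivity and injectivity both follow by applying $f$, and it is a local diffeomorphism because $\widetilde{N}$ is transverse to the level sets $f^{-1}(t)=\phi_{t}(L_{0})$. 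Under this identification the leaves of $\widetilde{\mathcal{F}_{\alpha}}$ are exactly the slices $L_{0}\times\{t\}$ and the metric decomposes along the geodesic flow as $\widetilde{g}=dt^{2}+g_{t}$, exhibiting $(\widetilde{M},\widetilde{\mathcal{F}_{\alpha}},\widetilde{g})$ as a product codimension one Riemannian foliation. I expect the main obstacle to be the bundle-like verification for $\mathcal{F}_{\alpha}$ — making Reinhart's geodesic characterization precise in the presence of the ambient foliation $\mathcal{F}$ — together with confirming that the flow of $\widetilde{N}$ is complete and genuinely sweeps out all of $\widetilde{M}$, which is what upgrades the local normal-coordinate decomposition into the global product structure.
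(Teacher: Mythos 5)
Your proof is correct, and it reaches the paper's conclusions by a noticeably more self-contained route. Where you invoke Frobenius on $\ker\alpha$ and a divergence computation ($\operatorname{div}(\alpha^{\#})=-\delta\alpha=0$, with $\delta\alpha=0$ coming from Lemma \ref{basicHarmonicOneFormsAreHarmonicLemma}), the paper instead reads everything off Rummler's formula: viewing $\alpha$ as the characteristic form of the flow of $\alpha^{\#}$, the identity $d\alpha=0$ forces both the mean-curvature term and the Euler term $\varphi_{0}$ to vanish, which simultaneously gives that the flow is geodesic and that $\ker\alpha$ is involutive; then $d(\overline{\ast}\alpha)=d(\ast\alpha)=0$ and Rummler's formula applied to $\mathcal{F}_{\alpha}$ give minimality. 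Both arguments hinge on the same key lemma (basic harmonic one-forms are harmonic when the mean curvature is basic), so the core mechanism is shared. Your treatment of the bundle-like property via Reinhart's geodesic criterion makes explicit a point the paper leaves implicit (one can also see it even more directly: $\mathcal{L}_{X}\alpha=d(X\lrcorner\alpha)+X\lrcorner d\alpha=0$ for $X\in\Gamma(\ker\alpha)$, so the transverse metric $\alpha\otimes\alpha$ is holonomy invariant). The largest divergence is in the universal cover step: the paper cites the Blumenthal--Hebda lifting results, whereas you construct the product structure by hand — pulling back $\alpha$ to an exact form $df$ with $|df|\equiv 1$, using completeness of $\widetilde{M}$ to get a complete unit gradient flow, and producing the diffeomorphism $L_{0}\times\mathbb{R}\to\widetilde{M}$ with the splitting $\widetilde{g}=dt^{2}+g_{t}$. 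Your version buys an explicit metric decomposition and avoids external machinery; the paper's version is shorter and additionally records that the lifted foliation $\widetilde{\mathcal{F}}$ sits inside $\widetilde{\mathcal{F}}_{\alpha}$, which is how the saturation statement passes to the cover. All the steps you flag as potential obstacles (Reinhart's criterion in the presence of $\mathcal{F}$, completeness of the flow, surjectivity of the flow map) go through exactly as you describe, so there is no gap.
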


\begin{proof}
We are given that $\alpha $ is a basic harmonic one-form of constant length $%
1$, so that it is in fact also harmonic (Lemma \ref%
{basicHarmonicOneFormsAreHarmonicLemma}). Then $\ast \alpha $ is also a
harmonic form of constant length, and we have $d \alpha=0$
and $d\left( \ast \alpha \right) =0$. Then $\alpha $ is the characteristic
form of the foliation given by the flow of the vector field $\alpha ^{\#}$,
and $\ast \alpha $ is the characteristic form of the normal bundle. By
Rummler's formula, the $d\alpha =0$ implies the flow is minimal\ (geodesic)
and the normal bundle is involutive and defines a Riemannian foliation $%
\mathcal{F}_{\alpha }$ of codimension one. By $d\left( \ast \alpha \right)
=0 $, we see that the foliation $\mathcal{F}_{\alpha }$ is minimal. The form 
$\alpha $ generates the basic cohomology $H^{\ast }\left( M,\mathcal{F}%
_{\alpha }\right) $, and this cohomology injects into the cohomology of $M$.
By \cite[Corollary 3.3]{BlumenthalHebda1983_deRham} and \cite[Section 5]%
{BlumenthalHebda1984_Ehresmann}, the metric and foliations lift to the
universal cover $\left( \widetilde{M},\widetilde{\mathcal{F}},\widetilde{%
\mathcal{F}}_{\alpha },\widetilde{g}\right) $ so that the lifted $\widetilde{%
\mathcal{F}}_{\alpha }$ is a product foliation, and $T\widetilde{\mathcal{F}}%
\subseteq T\widetilde{\mathcal{F}}_{\alpha }$.
\end{proof}

\begin{remark}
In the case of transverse formality, the proposition above produces $k$
orthogonal codimension one minimal Riemannian foliations $\mathcal{F}%
_{\alpha _{j}}$ for an orthonormal set of basic harmonic one-forms $\left\{
\alpha _{j}\right\} $, where $k=\dim H^{1}\left( M,\mathcal{F}\right) $, such that the original foliation $\mathcal{F}$ is the contained
the intersection of all the $\mathcal{F}_{\alpha _{j}}$.
\end{remark}

\begin{proposition}
Let $\left( M,\mathcal{F},g\right) $ be a nontaut ORFM, such that the metric 
$g$ is transversely formal and $\kappa $ is basic harmonic (thus harmonic).
Then $H=\kappa ^{\#}$ generates a geodesic flow such that $\nabla _{\bullet
}H$ is symmetric and the normal bundle to $H\ $is integrable and is the
tangent bundle to a minimal Riemannian foliation that contains $\mathcal{F}$
as a subfoliation.
\end{proposition}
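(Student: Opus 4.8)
The plan is to reduce the statement to the preceding proposition (the one on unit-length basic harmonic one-forms) after normalizing the mean curvature, and then to extract the geodesic and symmetry assertions by direct computation with the ambient Levi-Civita connection $\nabla^M$.

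First I would record the basic facts about $\kappa$. Since the metric is transversely formal, Lemma \ref{constantInnerProdLemma} applied to the basic harmonic one-form $\kappa$ shows that $|\kappa|^2=(\kappa,\kappa)$ is pointwise constant. Because the foliation is nontaut, the class $[\kappa_b]=[\kappa]\in H^1(M,\mathcal{F})$ is nonzero, so $\kappa$ does not vanish identically; hence $|\kappa|\equiv c$ for some constant $c>0$, and $H=\kappa^{\#}$ is a nowhere-vanishing vector field of constant length. Moreover, since the mean curvature is basic, Lemma \ref{basicHarmonicOneFormsAreHarmonicLemma} gives that $\kappa$ is harmonic on $(M,g)$, i.e. $d\kappa=0$ and $\delta\kappa=0$.

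Next I would prove the symmetry and geodesic statements, which use only $d\kappa=0$ and constant length. Dualizing $d\kappa=0$ via $d\kappa(X,Y)=g(\nabla^M_X H,Y)-g(\nabla^M_Y H,X)$ yields $g(\nabla^M_X H,Y)=g(\nabla^M_Y H,X)$ for all $X,Y$, which is exactly the symmetry of $\nabla_{\bullet}H$. Differentiating the constant function $|H|^2$ gives $g(\nabla^M_X H,H)=\tfrac12 X(|H|^2)=0$ for every $X$; combined with symmetry, $g(\nabla^M_H H,Y)=g(\nabla^M_Y H,H)=0$ for all $Y$, so $\nabla^M_H H=0$ and the flow of $H$ is geodesic. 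Since $\kappa$ is closed and nowhere zero, its kernel $H^{\perp}=\ker\kappa$ is an integrable distribution by the Frobenius theorem. To identify this foliation and its geometry, I set $\alpha=c^{-1}\kappa$, a basic harmonic one-form of unit length, and invoke the preceding proposition: it produces a codimension-one minimal Riemannian foliation $\mathcal{F}_{\alpha}$ with $T\mathcal{F}_{\alpha}=\ker\alpha=\ker\kappa=H^{\perp}$. Finally, because the mean curvature vector $H=\kappa^{\#}$ lies in $Q=(T\mathcal{F})^{\perp}$ by its very definition, we have $T\mathcal{F}\subseteq H^{\perp}=T\mathcal{F}_{\alpha}$, so $\mathcal{F}$ is a subfoliation of the minimal Riemannian foliation $\mathcal{F}_{\alpha}$, as required.

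The main obstacle is the first step: one must know that $\kappa$ has constant, nonzero length before the preceding proposition can be applied, and this is precisely where transverse formality (through Lemma \ref{constantInnerProdLemma}) and nontautness (to rule out $\kappa\equiv 0$) are genuinely used. Once constant length is established, normalization reduces everything to the unit-length case, and the remaining claims about $\nabla_{\bullet}H$ and the geodesic flow are routine connection computations.
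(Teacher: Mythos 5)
Your proof is correct and takes essentially the same route as the paper: constant length of $\kappa$ via Lemma \ref{constantInnerProdLemma}, harmonicity giving $d\kappa=0=\delta\kappa$, and then integrability and minimality of $\ker\kappa$; the paper merely compresses the last step into a direct appeal to Rummler's formula for the characteristic forms of the flow of $H$ and of its normal bundle, which is exactly the argument hidden inside the preceding proposition that you invoke after normalizing. If anything, your write-up supplies details the paper's two-line proof leaves implicit --- the nonvanishing of $\kappa$ from nontautness (via $[\kappa_b]\neq 0$), and the explicit derivation of the symmetry of $\nabla_{\bullet}H$ and of $\nabla^M_H H=0$ from closedness and constant length.
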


\begin{proof}
Since $\left\vert \kappa \right\vert ^{2}$ is constant, $H=\kappa ^{\#}$ has
constant norm and generates a geodesic flow. Since $d\kappa =0=d(\ast \kappa)$%
, by Rummler's formula normal bundle to $H$ is integrable and is the tangent
bundle to a minimal Riemannian foliation.
\end{proof}

\subsection{Formality versus transverse formality}\label{FormalityTransverseFormalitySection}
The main results show that geometric formality of the Riemannian manifold coupled with the assumption that the foliation has  basic mean curvature in some special cases implies transverse formality. 

\begin{proposition} \label{formalImpliesTransverseFormalProp}
Suppose $\left( M,\mathcal{F},g\right) $ is an ORFM with basic mean
curvature and integrable normal bundle. Assume that $\left( M,g\right) $ is
geometrically formal. Then $\left( M,\mathcal{F},g\right) $ is transversely
formal. If moreover the foliation is nontaut, the pointwise inner product of a basic 
$\kappa $-harmonic form and a basic harmonic form is always zero.
\end{proposition}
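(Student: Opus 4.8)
The plan is to make everything hinge on a single structural reduction: under the two hypotheses, the basic codifferential and the ordinary codifferential agree on basic forms, so that basic harmonicity and ordinary harmonicity coincide on the subcomplex of basic forms in both directions. First I would record this. By \cite[Proposition 2.4]{ParkRichardson1996_BasicLaplacian}, on basic forms one has
\[
\delta_{b}=\delta+(\kappa_{b}-\kappa)\lrcorner+(-1)^{p}\varphi_{0}\lrcorner\circ(\chi_{\mathcal{F}}\wedge).
\]
The hypothesis of basic mean curvature gives $\kappa_{b}=\kappa$, which kills the middle term, and the integrability of $Q$ gives $\varphi_{0}=0$ by Rummler's formula, which kills the last term. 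Hence $\delta_{b}=\delta$ on all basic forms. It follows that a basic form is basic harmonic if and only if it is harmonic on $M$: if $\alpha$ is basic harmonic then $d\alpha=0$ and $\delta\alpha=\delta_{b}\alpha=0$; conversely a basic $\omega$ with $d\omega=0=\delta\omega$ satisfies $\delta_{b}\omega=\delta\omega=0$. This is the all-degree strengthening of Lemma \ref{basicHarmonicOneFormsAreHarmonicLemma}, which needed only basic mean curvature because $\varphi_{0}\lrcorner(\chi_{\mathcal{F}}\wedge\cdot)$ already vanishes on basic one-forms.

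For transverse formality I would use the interior-product characterization. Let $\alpha,\beta$ be basic harmonic forms; by the reduction they are harmonic on $M$. Since $(M,g)$ is geometrically formal, the interior product of two harmonic forms is harmonic, so $\alpha\lrcorner\beta$ is harmonic on $M$. Because $\alpha,\beta$ are sections of $\wedge^{*}Q^{*}$ and the metric splits orthogonally as $TM=T\mathcal{F}\oplus Q$, the interior product $\alpha\lrcorner\beta$ computed on $M$ coincides with the transverse one and is again basic. A basic form harmonic on $M$ is basic harmonic by the reduction, so $\alpha\lrcorner\beta$ is basic harmonic. By the equivalent characterization of transverse formality proved above (that transverse formality is equivalent to the interior product of any two basic harmonic forms being basic harmonic), $\left(M,\mathcal{F},g\right)$ is transversely formal.

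For the second statement, let $\gamma$ be a basic harmonic $r$-form and $\eta$ a basic $\kappa$-harmonic $r$-form (recall $\kappa=\kappa_{b}$ here). By the reduction $\gamma$ is harmonic on $M$, and since $\overline{\ast}$ carries basic $\kappa_{b}$-harmonic forms to basic harmonic forms, $\overline{\ast}\eta$ is basic harmonic, hence harmonic on $M$. Geometric formality then makes $\gamma\wedge\overline{\ast}\eta$ harmonic on $M$. But $\gamma\wedge\overline{\ast}\eta=(\gamma,\eta)\,\nu=(\eta,\gamma)\,\nu$ is a basic $q$-form, so by the reduction it is a basic harmonic $q$-form. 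Finally, since the foliation is nontaut we have $H^{q}\left(M,\mathcal{F}\right)=0$, and by the basic Hodge theorem the only basic harmonic $q$-form is $0$; therefore $(\eta,\gamma)\,\nu=0$ and $(\eta,\gamma)=0$ pointwise.

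The conceptual core, and the only place where the hypotheses are genuinely spent, is the identity $\delta_{b}=\delta$ on basic forms, which forces the two notions of harmonicity to agree on basic forms; once that is in hand, geometric formality transfers across directly and nontautness does the rest. The remaining points are bookkeeping rather than obstacles: verifying that the pointwise wedge and interior products of basic forms stay basic and agree with their computations on $M$ (immediate from the orthogonal splitting), and confirming the normalization giving $\gamma\wedge\overline{\ast}\eta=(\eta,\gamma)\,\nu$. I expect no serious difficulty beyond carefully invoking that $\varphi_{0}=0$ is exactly the integrability of $Q$, as recorded after Rummler's formula.
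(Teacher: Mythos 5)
Your proof is correct, and its core is identical to the paper's: both arguments hinge on the observation that with $\kappa =\kappa _{b}$ and $\varphi _{0}=0$ (integrable normal bundle), \cite[Proposition 2.4]{ParkRichardson1996_BasicLaplacian} gives $\delta _{b}=\delta $ on basic forms, so that a basic form is basic harmonic if and only if it is harmonic on $M$, after which geometric formality transfers across directly. The differences are in the packaging. For the first claim, the paper verifies the definition of transverse formality head-on: for $\alpha $ basic $\kappa $-harmonic and $\beta $ basic harmonic it applies geometric formality to $\beta \lrcorner \left( \overline{\ast }\alpha \right) =\pm \overline{\ast }\left( \beta \wedge \alpha \right) $ and concludes that $\alpha \wedge \beta $ is basic $\kappa $-harmonic; you instead verify the interior-product criterion for two basic harmonic forms and invoke the equivalence proposition proved earlier in Section \ref{TransvFormalitySection}. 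These are interchangeable, and your appeal to the orthogonal splitting $TM=T\mathcal{F}\oplus Q$ to identify the ambient and transverse interior products of basic forms is the right justification for that step. For the second claim the endings genuinely diverge: the paper notes that $\left( \alpha ,\beta \right) \nu $ is harmonic on $M$, hence of constant length, so $\left( \alpha ,\beta \right) $ is constant, and then uses $\delta \nu =\delta _{b}\nu =\kappa \lrcorner \nu \neq 0$ in the nontaut case to force that constant to vanish; you instead observe that $\left( \gamma ,\eta \right) \nu $ is basic harmonic and kill it using Masa's characterization of tautness ($H^{q}\left( M,\mathcal{F}\right) =0$ for nontaut foliations) together with the basic Hodge theorem. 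Your ending is shorter and purely cohomological; the paper's ending buys slightly more, namely that the pointwise inner product is constant even without the nontautness hypothesis (in the spirit of Lemma \ref{constantInnerProdLemma}), a conclusion that your route does not recover.
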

\begin{proof}
 Since the mean curvature is basic (i.e. $\kappa =\kappa _{b}$) and the
normal bundle is integrable, every basic harmonic form is also harmonic,
because the basic codifferential is a restriction of the ordinary
differential $\delta $ (see \cite[Proposition 2.4]%
{ParkRichardson1996_BasicLaplacian}). Also conversely, any harmonic form which is basic is a basic harmonic form. Let $\alpha $ be a basic $\kappa $%
-harmonic $r$-form  and let $\beta $ be a basic harmonic $s$-form. Since $\left( M,g\right) $ is 
geometrically formal, $\beta \lrcorner \left( \overline{\ast }\alpha \right) 
$ is harmonic. As $\beta $ is basic, we have $\beta \lrcorner \left( \overline{%
\ast }\alpha \right) =\pm \overline{\ast }%
\left( \beta \wedge \alpha \right) $ and, thus, $\overline{\ast }\left( \alpha
\wedge \beta \right) $ is harmonic. But this last form is also basic, hence it becomes basic harmonic, so that $\alpha
\wedge \beta $ is a basic $\kappa $-harmonic. Thus, $\left( M,\mathcal{F},g\right) $
is transversely formal. In addition, if $r=s$, then $\overline{\ast }\alpha
\wedge \beta =\left( \alpha ,\beta \right) \nu $ is harmonic and thus has
constant length. Then $\left( \alpha ,\beta \right) $ is a constant. If the
foliation is nontaut, then $\nu $ is not harmonic, since $\delta \nu=\delta_b\nu=\kappa
\lrcorner \nu $, so we have $\left( \alpha ,\beta \right) =0$.
\end{proof}

\begin{remark}
Note that it is possible that a   bundle-like metric   on a Riemannian foliation is both
transversely formal and geometrically formal, but where the normal bundle
is not integrable. See, for example, the Hopf fibration in Section \ref%
{HopfFibrationExample}.
\end{remark}

\begin{lemma}
\label{1-FormalityTheorem}If the ORFM $\left( M,\mathcal{F},g\right) $ is  geometrically formal  and has basic mean curvature, then the wedge product of any two
basic harmonic $1$-forms is basic harmonic.
\end{lemma}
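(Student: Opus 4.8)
The plan is to pass to ordinary Hodge theory on $(M,g)$, apply geometric formality there, and then reconcile the ordinary codifferential $\delta$ with the basic codifferential $\delta_b$. First I would take two basic harmonic one-forms $\alpha,\beta$. Since $(M,\mathcal{F},g)$ has basic mean curvature, Lemma \ref{basicHarmonicOneFormsAreHarmonicLemma} shows that $\alpha$ and $\beta$ are harmonic on $(M,g)$. Geometric formality then gives that $\alpha\wedge\beta$ is harmonic, so that both $d(\alpha\wedge\beta)=0$ and $\delta(\alpha\wedge\beta)=0$. Moreover $\alpha\wedge\beta$ is basic: it annihilates leafwise vectors because $\alpha$ and $\beta$ do, and $X\lrcorner d(\alpha\wedge\beta)=0$ holds trivially since $\alpha\wedge\beta$ is closed. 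Thus it only remains to verify that $\delta_b(\alpha\wedge\beta)=0$.

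To do this I would use the formula for $\delta_b$ on basic forms recalled in the proof of Lemma \ref{basicHarmonicOneFormsAreHarmonicLemma}. Because $\kappa=\kappa_b$, the term $(\kappa_b-\kappa)\lrcorner$ drops out and the formula reads $\delta_b=\delta+(-1)^p\varphi_0\lrcorner\circ(\chi_{\mathcal{F}}\wedge)$ on basic forms. Applying it to $\alpha\wedge\beta$ and using $\delta(\alpha\wedge\beta)=0$ leaves
\[
\delta_b(\alpha\wedge\beta)=(-1)^p\,\varphi_0\lrcorner(\chi_{\mathcal{F}}\wedge\alpha\wedge\beta),
\]
so the whole proof reduces to showing that this term vanishes.

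The step I expect to require the most care is this vanishing, which I would obtain by counting leafwise degrees. By Rummler's formula $\varphi_0=d\chi_{\mathcal{F}}+\kappa\wedge\chi_{\mathcal{F}}$; since $\chi_{\mathcal{F}}$ already has maximal leafwise degree $p$, this expression has transverse degree at most $2$, while the defining property $\chi_{\mathcal{F}}\lrcorner\varphi_0=0$ removes the part of full leafwise degree $p$. Hence $\varphi_0$ is concentrated in transverse degree $2$ and leafwise degree $p-1$. As $\alpha\wedge\beta$ is basic of degree $2$, the form $\chi_{\mathcal{F}}\wedge\alpha\wedge\beta$ has transverse degree $2$ and leafwise degree $p$, so contracting it with $\varphi_0$ removes both transverse legs together with $p-1$ of the $p$ leafwise legs, leaving a form of leafwise degree exactly $1$. (This is the local adapted-coframe computation behind the remark, in the proof of Lemma \ref{basicHarmonicOneFormsAreHarmonicLemma}, that the correction term vanishes on one-forms; for two-forms it need not vanish by degree count alone.) On the other hand $\delta_b(\alpha\wedge\beta)$ is basic, hence of leafwise degree $0$. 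A form that is simultaneously of leafwise degree $0$ and leafwise degree $1$ must be zero, so $\delta_b(\alpha\wedge\beta)=0$; together with $d(\alpha\wedge\beta)=0$ this shows that $\alpha\wedge\beta$ is basic harmonic.
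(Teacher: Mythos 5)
Your proof is correct and follows essentially the same route as the paper's: pass to ordinary harmonicity via the basic mean curvature assumption, apply geometric formality, and then use the formula $\delta_b=\delta+(-1)^{p}\varphi_0\lrcorner(\chi_{\mathcal{F}}\wedge\cdot)$ on the basic form $\alpha\wedge\beta$. The only difference is cosmetic: where the paper dismisses the correction term as orthogonal to basic forms and hence zero, you justify the same vanishing explicitly by the leafwise/transverse bidegree count, which is exactly the reason behind the paper's assertion.
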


\begin{proof}
If $\alpha $ and $\beta $ are basic harmonic $1$-forms, then they are also
harmonic, because $\delta _{b}$ is a restriction of $\delta $ on $1$-forms
(see \cite[Proposition 2.4]{ParkRichardson1996_BasicLaplacian}). Then $%
\alpha \wedge \beta $ is harmonic, since $M$ is formal. On the other hand,
from \cite[Proposition 2.4]{ParkRichardson1996_BasicLaplacian},   we have  
\begin{equation*}
\delta _{b}\left( \alpha \wedge \beta \right) =\delta \left( \alpha \wedge
\beta \right) +\left( -1\right) ^{\dim \mathcal{F}}\varphi _{0}\lrcorner
\left( \chi _{F}\wedge \alpha \wedge \beta \right).
\end{equation*}%
Since $\delta \left( \alpha \wedge \beta \right) =0$ and $\varphi
_{0}\lrcorner \left( \chi _{F}\wedge \alpha \wedge \beta \right) $ is
orthogonal to basic forms and is thus $0$, we deduce $\delta _{b}\left( \alpha
\wedge \beta \right) =0$. Therefore $\alpha \wedge \beta $ is basic harmonic.
\end{proof}

\begin{theorem}
\label{codim2One-formalProp}Let $\left( M,\mathcal{F},g\right) $ be an ORFM
such that $\left( M,g\right) $ is  geometrically formal  and $\mathcal{F}$ has codimension
two with basic mean curvature. Then $\left( M,\mathcal{F},g\right) $ is
transversely formal. If moreover the foliation is nontaut, then ${\rm dim}H^1(M,\mathcal{F})=1$. 
\end{theorem}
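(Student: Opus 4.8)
The plan is to prove the two assertions in turn. For transverse formality, note first that since $q=2$ every basic form has degree $0$, $1$ or $2$, so by the interior-product characterization of transverse formality it suffices to show that $\alpha\lrcorner\gamma$ is basic harmonic for all basic harmonic $\alpha,\gamma$. If one factor is a $0$-form this is a scalar multiple and there is nothing to check, and if $\deg\alpha=\deg\gamma=1$ then $\alpha\lrcorner\gamma=(\alpha,\gamma)$; by Lemma \ref{basicHarmonicOneFormsAreHarmonicLemma} the forms $\alpha,\gamma$ are ordinary harmonic, so geometric formality of $(M,g)$ makes $(\alpha,\gamma)$ constant, i.e. basic harmonic. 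Equivalently, writing a basic $\kappa_b$-harmonic $1$-form as $\beta=\overline{\ast}\gamma$ with $\gamma$ basic harmonic, one gets $\alpha\wedge\beta=\pm(\alpha,\gamma)\,\nu$, a constant multiple of the basic $\kappa_b$-harmonic form $\nu$; this is transverse $1$-formality.

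The only remaining products involve a basic harmonic $2$-form or a basic $\kappa_b$-harmonic $0$-form, and here I would distinguish cases. In the \emph{nontaut} case $H^{2}(M,\mathcal{F})=0$ and dually $H^{0}_{\kappa_b}(M,\mathcal{F})\cong H^{2}(M,\mathcal{F})=0$, so there are no nonzero basic harmonic $2$-forms and no nonconstant basic $\kappa_b$-harmonic $0$-forms; every product then reduces to the degree-$1$ case above and transverse formality holds. In the \emph{minimal} case ($\kappa_b=0$) the $\kappa_b$-harmonic $0$-forms are constants and the basic harmonic $2$-forms are constant multiples of $\nu$, so again every product is at once basic harmonic (equivalently $\kappa_b$-harmonic, as $\kappa_b=0$).

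The hard part will therefore be the taut, nonminimal case. There the generator of $H^{2}(M,\mathcal{F})$ is $f_{0}\,\nu$ with $df_{0}=f_{0}\kappa_b$ and $\kappa_b\neq 0$, and a direct computation shows that the product $(f_{0}\nu)\wedge h$ with a basic $\kappa_b$-harmonic $0$-form $h$ is basic $\kappa_b$-harmonic only when $f_{0}$ has constant length, that is only when $\kappa_b=0$; indeed $\delta_{\kappa_b}(f_{0}h\,\nu)=\pm 2f_{0}h\,\overline{\ast}\kappa_b$. Hence I must show that geometric formality forbids a taut nonminimal bundle-like metric in codimension two. When the normal bundle is integrable ($\varphi_{0}=0$) this is immediate from Proposition \ref{formalImpliesTransverseFormalProp}; the genuinely delicate point is that when $\varphi_{0}\neq 0$ one has $\ast(f_{0}\nu)=\pm f_{0}\chi_{\mathcal{F}}$ and $\delta(f_{0}\nu)=\pm f_{0}\,\ast\varphi_{0}\neq 0$, so $f_{0}\nu$ is not ordinary harmonic and the constant-length property of geometrically formal metrics does not apply to it directly. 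My intended route is to pass to the ordinary harmonic representative of $[f_{0}\nu]\in H^{2}(M)$, use its constant length together with the tautness relation $\kappa_b=d\sigma$, and deduce $\kappa_b=0$; producing this last step is the main obstacle, after which the previous paragraph completes the proof of transverse formality.

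For the second assertion, suppose the foliation is nontaut. Then $[\kappa_b]\neq 0$ in $H^{1}(M,\mathcal{F})$, so $\dim H^{1}(M,\mathcal{F})\ge 1$. On the other hand basic harmonic $1$-forms are ordinary harmonic and of constant length by geometric formality, so linearly independent ones stay pointwise independent; being sections of the rank-two bundle $Q^{\ast}$ they number at most two, giving $\dim H^{1}(M,\mathcal{F})\le 2$. If $\dim H^{1}(M,\mathcal{F})=2$ I would use the constancy of pairwise inner products to pick an orthonormal pair $\alpha_{1},\alpha_{2}$ of basic harmonic $1$-forms; then $\alpha_{1}\wedge\alpha_{2}=\pm\nu$, which is basic harmonic by Lemma \ref{1-FormalityTheorem}. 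Since $\nu$ is basic harmonic exactly when $\kappa_b=0$, this would force the foliation to be minimal, hence taut, contradicting the hypothesis. Therefore $\dim H^{1}(M,\mathcal{F})=1$.
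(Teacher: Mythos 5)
Your reconstruction of what can actually be proved is correct, and the parts you completed coincide with the paper's proof: the paper's entire argument for transverse formality is your degree-$(1,1)$ computation ($\alpha\wedge\beta=\pm(\alpha,\overline{\ast}\beta)\,\nu$ with constant coefficient, $\nu$ being always basic $\kappa_b$-harmonic), after which it simply declares transverse formality without ever examining the products involving basic harmonic $2$-forms or basic $\kappa_b$-harmonic $0$-forms that you isolate. Your handling of the nontaut and minimal cases is right, and your proof of the second assertion is sound and essentially the paper's: the paper also invokes Lemma \ref{1-FormalityTheorem}, but concludes from $\alpha_1\wedge\alpha_2$ being basic harmonic together with $H^2(M,\mathcal{F})=0$ that $\alpha_1,\alpha_2$ are proportional, whereas you conclude that $\nu$ would be basic harmonic and hence $\kappa_b=0$; the paper gets $\dim H^1(M,\mathcal{F})\le 2$ from Corollary \ref{basicCohomologyLiimitationCorollary}, while you get it from constancy of Gram matrices of ordinary harmonic forms. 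Either variant works.

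The gap you flagged in the taut, nonminimal case is genuine, and it cannot be closed, because the step you hoped to supply is false. Take the Hopf flow on $\mathbb{S}^3$ with round metric $g=g_L\oplus g_Q$ and set $g'=e^{-f}g_L\oplus g_Q$ for a nonconstant basic function $f$. Then $g'$ is bundle-like with the same transverse metric, and its mean curvature is $\kappa'=\kappa_b'=\tfrac12\,df$ (this is the formula $\delta_b'=\delta_b+\tfrac{\dim\mathcal{F}}{2}\,df\lrcorner$ used in the paper), which is basic, exact and nonzero; so the foliation is taut but not minimal for $g'$, and the normal bundle is noninvolutive, so Proposition \ref{formalImpliesTransverseFormalProp} gives no help. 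On the other hand $(\mathbb{S}^3,g')$ is geometrically formal for \emph{every} metric $g'$, since on a rational homology sphere the harmonic forms are exactly the constants and the constant multiples of the volume form. Thus all hypotheses of the theorem hold, yet $(\mathbb{S}^3,\mathcal{F},g')$ is not transversely formal: the basic harmonic representative of $H^2(M,\mathcal{F})\cong\mathbb{R}$ is $e^{f/2}\nu$, whose pointwise norm $e^{f/2}$ is nonconstant, contradicting Lemma \ref{constantInnerProdLemma}; equivalently, Theorem \ref{TautCaseKb_zero_Theorem} forces $\kappa_b=0$ for taut transversely formal metrics. So the theorem as stated fails precisely in the case you could not settle; it is correct only in the situations your case analysis covers (nontaut, or $\kappa_b=0$), and the paper's proof is silent about exactly the cases where it breaks. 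To salvage the statement one must add the hypothesis that the foliation is nontaut (where your argument, like the paper's, is complete) or that the metric is minimal.
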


\begin{proof}
Suppose $\alpha $ is a basic harmonic $1$-form, and $\beta $ is a basic $%
\kappa $-harmonic $1$-form. Then $\overline{\ast }\beta $ is a basic
harmonic $1$-form. Thus, $\alpha $ and $\overline{\ast }\beta $ are harmonic
forms. Since the metric is formal, we have $(\alpha ,\ast \beta )=\left(
\alpha ,\overline{\ast }\beta \right) $ is constant. But $\alpha \wedge
\beta =\pm \left( \alpha ,\overline{\ast }\beta \right) \nu $, so since $%
\left( \alpha ,\overline{\ast }\beta \right) $ is constant and $\nu $ is $%
\kappa $-harmonic, $\alpha \wedge \beta $ is $\kappa $-harmonic. Hence $M$
is transversely formal.  By Corollary \ref{basicCohomologyLiimitationCorollary} we have ${\rm dim} H^1(M,\mathcal{F})\leq 2$. Notice here $H^1(M,\mathcal{F})$ cannot be zero since the foliation is nontaut. We shall now prove that ${\rm dim}H^1(M,\mathcal{F})=2$ cannot occur. Assume it were the case and let us denote by $\{\alpha_1,\alpha_2\}$ a basis of $H^1(M,\mathcal{F})$; then by Lemma \ref{1-FormalityTheorem} one would get that $\alpha_1\wedge\alpha_2$ is basic harmonic, hence belonging to $H^2(M,\mathcal{F})=0$, since the foliation is nontaut. Thus, $\alpha_1= c\alpha_2$ for some $c\in\mathbb{R}$, a contradiction. Therefore ${\rm dim}H^1(M,\mathcal{F})=1$.
\end{proof}

\begin{remark} From Corollary \ref{basicfirstCohomologynontautCorollary}, one can see that for codimension $2$ nontaut transversely formal foliations, we have that ${\rm dim}H^1(M,\mathcal{F})=1$. But this is true under the extra assumption that $\kappa$ is basic harmonic, which is not the case in Theorem \ref{codim2One-formalProp}. However, we need the assumption that the metric should be geometrically formal.  
\end{remark}

\subsection{Transverse Lie Foliations}

Transverse foliations form a very particular class of Riemannian foliations which play an important role in the 
Molino structure theorem, cf. \cite{Molino1988_RiemFolsBook}.   For this class of foliations we obtain some very strong results about transverse formality of foliations with dense leaves. 

A foliation $\mathcal{F}$ of codimension $q$ on a manifold $M$ is called a
transverse Lie foliation if its normal bundle $Q$ admits a
global trivialization $X_{1},...,X_{q}$ by foliated vector fields which form
a Lie algebra $\mathbf{g}$. Let $G$ be a simply connected Lie group whose
Lie algebra is isomorphic to $\mathbf{g}$. Let $x_{0}\in M$. Then there
exists a covering $p:\widehat{M}\rightarrow M$ of the manifold $M$, a
surjective submersion $D\colon \widehat{M}\rightarrow G$ of connected
fibres, and a homomorphism $h\colon \pi _{1}(M,x_{0})\rightarrow G$ such
that $D$ is $\pi _{1}(M,x_{0})$-equivariant with respect to the natural
action of $\pi _{1}(M,x_{0})$ on $\widehat{M}$ and the action of $G$ by left
translations. The fibres of $D$ are the leaves of the lifted foliation $%
\widehat{\mathcal{F}}$. In this case, basic forms on $(M,{\mathcal{F}})$
correspond bijectively to $\Gamma =\mathrm{im}\left( h\right) $-invariant
forms on $G$; thus the complex of basic forms $A^{\ast }(M,{\mathcal{F}})$
can be identified with the complex of $\Gamma $-invariant forms $A^{\ast
}(G)^{\Gamma }$. If the leaves of $\mathcal{F}$ are dense in $M$, then the
subgroup $\Gamma $ is dense in $G$. Therefore, if a transverse Lie
foliations has dense leaves, then basic forms are in one-to-one
correspondence with left-invariant forms on $G$.

The normal part of a bundle-like metric $g$ corresponds to a left-invariant
Riemannian metric $\hat{g}$ on $G$. Assume that our foliation is minimal,
i.e. $\kappa =0$. Then any transversely harmonic basic form is closed and
coclosed. The corresponding left-invariant form on $G$ is also closed and
coclosed (and thus harmonic) for the corresponding left-invariant Riemannian
metric on the Lie group $G$.

First, assume that the group $G$ is compact and the Riemannian metric $\hat{g%
}$ is bi-invariant. Then harmonic forms on $G$ are just bi-invariant forms, 
\cite{Hodge1989HarmonicIntegrals, Maurin1997RiemannLegacy}. This fact
permits us to formulate the following proposition.

\begin{proposition}
Let $\mathcal{F}$ be a transverse $\mathbf{g}$-Lie foliation of a compact
manifold $M$ with dense leaves. If the corresponding simply connected Lie
group is compact, then the foliation $\mathcal{F}$ admits a bundle-like
metric for which the foliation is transversely formal.
\end{proposition}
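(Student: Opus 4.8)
The plan is to manufacture the required metric from a bi-invariant metric on $G$ and then identify the algebra of basic harmonic forms with the algebra of bi-invariant forms on $G$. Since $G$ is compact and simply connected it admits a bi-invariant metric $\hat g$; I take the bundle-like metric $g$ on $M$ whose normal part $g_Q$ corresponds to $\hat g$ under the trivialization $X_1,\dots,X_q$ of $Q$, as in the discussion preceding the statement. Because the leaves are dense, the identification $A^\ast(M,\mathcal{F})\cong A^\ast(G)^\Gamma$ becomes an isomorphism of graded differential algebras between basic forms on $(M,\mathcal{F})$ and left-invariant forms $\wedge^\ast\mathbf{g}^\ast$ on $G$, carrying $d$ to the Chevalley--Eilenberg differential and $\wedge$ to $\wedge$. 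The whole argument reduces to checking, after arranging minimality, that this isomorphism sends basic harmonic forms to bi-invariant forms.

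First I would establish that $g$ may be taken minimal, which is the crux. Compactness of $G$ forces $\mathbf{g}$ to be unimodular, so $H^q(\mathbf{g})=\mathbb{R}$; transporting this through the isomorphism gives $H^q(M,\mathcal{F})\neq 0$, hence $\mathcal{F}$ is taut and $[\kappa_b]=0$. By \cite{Dominguez1998FinitenessTensenessThmsRiemFols} I may change $g$ only along the leaves, keeping $g_Q$ fixed, so that the mean curvature is basic, $\kappa=\kappa_b$. Now $\kappa_b$ is a closed basic $1$-form, so it corresponds to a closed left-invariant $1$-form $\theta$ on $G$; since $[\kappa_b]=0$ the class of $\theta$ vanishes in $H^1(\mathbf{g})$, making $\theta$ exact in $\wedge^\ast\mathbf{g}^\ast$. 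But the only exact left-invariant $1$-forms are differentials of constants and hence vanish, so $\theta=0$, i.e. $\kappa=\kappa_b=0$ and $\mathcal{F}$ is minimal for this metric.

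With $\kappa=0$ one has $\kappa_b=0$, so $d_{\kappa_b}=d$ and $\delta_{\kappa_b}=\delta_b$, and the notions of basic $\kappa_b$-harmonic and basic harmonic coincide; transverse formality then amounts to the statement that the wedge product of any two basic harmonic forms is basic harmonic. By minimality a basic form is basic harmonic exactly when it is closed and coclosed, and under the isomorphism above such forms correspond precisely to the harmonic left-invariant forms on $G$, which for the bi-invariant metric are exactly the bi-invariant forms \cite{Hodge1989HarmonicIntegrals,Maurin1997RiemannLegacy}. Since the wedge of two bi-invariant forms is again bi-invariant, the image of $\alpha\wedge\beta$ is bi-invariant, hence harmonic on $G$, so $\alpha\wedge\beta$ is basic harmonic on $M$; this proves transverse formality.

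The main obstacle is the minimality step: one must exclude a nonzero basic mean curvature, and this is exactly where compactness of $G$ enters twice, first through unimodularity and tautness to force $[\kappa_b]=0$, and then through the rigidity of left-invariant closed $1$-forms to upgrade this to $\kappa_b=0$. Once minimality is in hand, the remaining content is the elementary fact that bi-invariant forms are closed under the wedge product.
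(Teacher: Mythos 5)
Your proof is correct and follows essentially the same route as the paper: lift a bi-invariant metric $\hat g$ on $G$ to a bundle-like metric, use compactness of $G$ to get $H^{q}(M,\mathcal{F})\neq 0$ and hence tautness, modify the metric only along the leaves to achieve minimality, and then identify basic harmonic forms with bi-invariant forms on $G$, which are closed under the wedge product. The only difference is that where the paper simply invokes the standard taut-metric modification, you re-derive it explicitly via Domínguez's theorem plus the observation that, with dense leaves, a basic-exact $1$-form (here $\kappa_b$, since $[\kappa_b]=0$) must vanish — a valid filling-in of the detail rather than a different method.
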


\begin{proof}
Take a bi-invariant Riemannian metric $\hat{g}$ on $G$. We can lift it to a
bundle-like metric on $(M,{\mathcal{F}})$. As $G$ is compact and simply
connected $H^{q}(M,{\mathcal{F}})=H^{q}(G)\neq 0$. Thus the foliation is
taut, and we can modify the metric $\hat{g}$ along the leaves to a minimal
Riemannian metric. Then the basic harmonic forms correspond to bi-invariant
forms on $G$; hence any wedge product of such forms is also basic harmonic.
\end{proof}

Second, consider a transverse Lie foliation modelled on a nilpotent Lie
group $G=N$. For such foliations we have the following result.

\begin{theorem}
\label{nilpotentTheorem}Let $\mathcal{F}$ be a transverse $\mathbf{g}$-Lie
foliation of a compact manifold $M$ with dense minimal leaves. If the Lie
algebra $\mathbf{g}$ is nilpotent with rational structure constants and the
foliation is transversely formal, then $N=\mathbb{R}^{q}$.
\end{theorem}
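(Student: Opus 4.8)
The plan is to translate transverse formality into a statement about the Chevalley--Eilenberg complex of $\mathbf{g}$, and then, assuming $\mathbf{g}$ is not abelian, to produce a nonzero \emph{exact} basic $2$-form that is forced to be basic harmonic, which is impossible. Since the leaves are dense and the foliation is minimal ($\kappa=0$, hence $\kappa_b=0$), the discussion preceding the theorem identifies the basic forms on $(M,\mathcal{F})$ with the left-invariant forms on $N$, i.e.\ with $\Lambda^{\ast}\mathbf{g}^{\ast}$ carrying the Chevalley--Eilenberg differential, and identifies the basic harmonic forms with the left-invariant harmonic forms for the induced left-invariant metric $\hat g$. Because $N$ is nilpotent it is unimodular, so every left-invariant $1$-form is coclosed; consequently the space $\mathcal{H}^{1}$ of basic harmonic $1$-forms is exactly the space of closed left-invariant $1$-forms, namely the annihilator $([\mathbf{g},\mathbf{g}])^{\mathrm{ann}}\subseteq\mathbf{g}^{\ast}$.

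Next I would record the only consequence of transverse formality that is needed. Since $\kappa_b=0$, basic harmonic and basic $\kappa_b$-harmonic forms coincide, so transverse formality says precisely that the wedge of any two basic harmonic forms is again basic harmonic. Applied to $\mathcal{H}^{1}$ and extended bilinearly, this yields $\Lambda^{2}\mathcal{H}^{1}\subseteq\mathcal{H}^{2}$, where $\mathcal{H}^{2}$ denotes the space of basic harmonic $2$-forms.

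The heart of the argument is to exhibit, under the assumption that $\mathbf{g}$ is not abelian, a nonzero exact element of $\Lambda^{2}\mathcal{H}^{1}$. Let $C^{1}=[\mathbf{g},\mathbf{g}]$ and $C^{2}=[\mathbf{g},C^{1}]$ be the first two terms of the lower central series. Because $\mathbf{g}$ is nilpotent and non-abelian, this series strictly decreases before reaching $0$, so $C^{1}\supsetneq C^{2}$, and I may choose $\zeta\in\mathbf{g}^{\ast}$ with $\zeta|_{C^{2}}=0$ but $\zeta|_{C^{1}}\neq 0$. Using $d\zeta(X,Y)=-\zeta([X,Y])$, one checks that $d\zeta\neq 0$ (since $\zeta$ is nonzero on $C^{1}=[\mathbf{g},\mathbf{g}]$), while $X\lrcorner d\zeta=0$ for every $X\in C^{1}$ (since $[C^{1},\mathbf{g}]=C^{2}\subseteq\ker\zeta$). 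Hence $d\zeta\in\Lambda^{2}([\mathbf{g},\mathbf{g}])^{\mathrm{ann}}=\Lambda^{2}\mathcal{H}^{1}$. This choice of $\zeta$ is the main obstacle: one must arrange that $d\zeta$ simultaneously stays nonzero and lands inside $\Lambda^{2}\mathcal{H}^{1}$ rather than merely in $\Lambda^{2}\mathbf{g}^{\ast}$, which is exactly what passing to the quotient $C^{1}/C^{2}$ of the lower central series accomplishes.

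Finally I would combine the two facts. By the third step $d\zeta\in\Lambda^{2}\mathcal{H}^{1}\subseteq\mathcal{H}^{2}$, so the basic $2$-form corresponding to $d\zeta$ is basic harmonic; but it is also basic exact by construction. Since $\kappa_b=0$, the operator $\delta_b$ is the genuine adjoint of $d$, so basic harmonic forms are orthogonal to basic exact forms: if $\omega=d\eta$ and $\Delta_b\omega=0$, then $\langle\omega,\omega\rangle=\langle d\eta,\omega\rangle=\langle\eta,\delta_b\omega\rangle=0$. This forces $d\zeta=0$, a contradiction. Therefore $\mathbf{g}$ is abelian and $N=\mathbb{R}^{q}$. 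The rationality of the structure constants enters only through Malcev's theorem, to guarantee that $\mathbf{g}$ is realized by a genuine transverse Lie foliation with a compact model, so that the cohomological identifications used above apply; the contradiction itself is intrinsic to the Lie algebra and avoids any appeal to formality of nilmanifolds.
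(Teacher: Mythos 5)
Your proof is correct, but it takes a genuinely different route from the paper's. The paper argues via global results: it identifies the basic cohomology with $H^{\ast}(\mathbf{n})$, notes that transverse formality makes the algebra of basic harmonic forms a (formal) minimal model for it, uses the rational structure constants to produce a lattice $\Gamma_0\subset N$ (Malcev) so that the compact nilmanifold $M_0=N/\Gamma_0$ has this cohomology (Nomizu) and is therefore formal, and then invokes Hasegawa's theorem that formal nilmanifolds are tori to conclude $N$ is abelian. You instead give a self-contained Lie-algebraic contradiction: identifying the basic harmonic $1$-forms $\mathcal{H}^1$ with the annihilator of $[\mathbf{g},\mathbf{g}]$ (closedness is exactly the annihilator condition, coclosedness is automatic since nilpotent groups are unimodular), you exploit the strict drop $C^2=[\mathbf{g},[\mathbf{g},\mathbf{g}]]\subsetneq C^1=[\mathbf{g},\mathbf{g}]$ of the lower central series to choose $\zeta\in\mathbf{g}^{\ast}$ with $d\zeta\neq 0$ but $X\lrcorner\, d\zeta=0$ for all $X\in C^1$, forcing the nonzero exact form $d\zeta$ into $\Lambda^2\mathcal{H}^1\subseteq\mathcal{H}^2$ by formality (using $\kappa_b=0$ so that harmonic and $\kappa_b$-harmonic coincide), which contradicts the $L^2$-orthogonality of basic harmonic and basic exact forms. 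Each step checks out: the computation $d\zeta(X,Y)=-\zeta([X,Y])$, the fact that decomposable products span $\Lambda^2\mathcal{H}^1$, and the adjointness $\langle d\zeta,d\zeta\rangle=\langle\zeta,\delta_b d\zeta\rangle=0$. What your approach buys: it avoids Hasegawa, Nomizu, and Malcev entirely, and in fact it never uses the rationality of the structure constants at all --- your closing remark that rationality is needed (via Malcev) for the cohomological identifications is inaccurate, since those identifications rest only on the dense-leaves hypothesis, which is given; so your argument actually establishes the theorem without that assumption. What the paper's approach buys is brevity modulo the cited theorems and a direct link to the formality literature. One further simplification available to you: since the leaves are dense, basic functions are constant, so basic exact $1$-forms vanish and the basic Hodge decomposition already shows every closed basic $1$-form is harmonic, making the unimodularity argument unnecessary.
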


\begin{proof}
\noindent The basic cohomology $H^{\ast }(M,{\mathcal{F}})$ is isomorphic to
the cohomology $H^{\ast }(\mathbf{n})$ of the Lie algebra $\mathbf{n}=Lie(N)$%
. The module of basic harmonic forms is a minimal model for the cohomology $%
H^{\ast }(\mathbf{n})$. The Lie group $N$ admits a lattice $\Gamma _{0}$,
and the cohomology of the compact nilmanifold $M_{0}=N/\Gamma _{0}$ is also
isomorphic to $H^{\ast }(\mathbf{n})$. Thus the compact nilmanifold $M_{0}$
is formal. The Hasegawa result ensures the nilmanifold $M_{0}$ is a torus,
cf. \cite{Hasegawa1989MinimalModelsNilmflds}. Thus the Lie group $N$ must be
abelian, hence $N=\mathbb{R}^{q}$.
\end{proof}

Lastly, consider a transverse Lie foliation modelled on a solvable Lie group 
$G=S$.
Hisashi Kasuya's results \cite{Kasuya2013GeomFormSolvmflds} on formality of
solvmanifolds can be used to formulate the following result for Lie
foliations.

\begin{theorem}
Let $\mathcal{F}$ be a taut transversely $\mathfrak{g}$-Lie
foliation of a compact manifold $M$ with dense minimal leaves modelled on a
special solvable Lie group $G$, i.e. $G=\mathbb{R}^{k}\times _{\varphi }\mathbb{R}^{s}$ with a
semisimple action $\varphi $. Moreover assume that $G$ admits a lattice .
Then there exists a bundle-like Riemannian metric such that the foliation is
transversely formal for this bundle-like metric.
\end{theorem}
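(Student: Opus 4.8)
The plan is to transport the problem to the Chevalley--Eilenberg complex of $\mathfrak{g}=\operatorname{Lie}(G)$ and to exhibit a left-invariant metric whose harmonic forms are closed under wedge product. Since $\mathcal{F}$ is a transverse Lie foliation with dense leaves, the complex of basic forms $A^*(M,\mathcal{F})$ is identified with the left-invariant forms $\wedge^*\mathfrak{g}^*$, with basic differential the Chevalley--Eilenberg differential, so $H^*(M,\mathcal{F})\cong H^*(\mathfrak{g})$. For the left-invariant metric $\hat g$ on $G$ chosen below I would lift it to a bundle-like metric and, using tautness, adjust it along the leaves so that the leaves become minimal exactly as in the compact case above; this keeps the transverse metric equal to $\hat g$ and forces $\kappa=0$. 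With $\kappa=0$ and $G$ unimodular (which holds because $G$ admits a lattice), the basic Laplacian on $A^*(M,\mathcal{F})$ agrees with the Lie-algebra Laplacian on $\wedge^*\mathfrak{g}^*$, so the basic harmonic forms are precisely the $\hat g$-harmonic elements of $\wedge^*\mathfrak{g}^*$, and (still because $\kappa=0$) basic harmonic and basic $\kappa_b$-harmonic forms coincide.

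Next I would exploit the special structure $\mathfrak{g}=\mathfrak{a}\ltimes_\varphi\mathfrak{n}$ with $\mathfrak{a}=\mathbb{R}^k$, $\mathfrak{n}=\mathbb{R}^s$ both abelian and $\varphi$ semisimple. Simultaneously diagonalizing $\varphi$ over $\mathbb{C}$ gives a weight decomposition $\mathfrak{n}_\mathbb{C}=\bigoplus_\lambda\mathfrak{n}_\lambda$, $\lambda\in\mathfrak{a}^*$, and I would choose $\hat g$ so that $\mathfrak{a}\perp\mathfrak{n}$ and the real weight spaces of $\mathfrak{g}^*$ are mutually orthogonal. This makes the induced $\mathfrak{a}$-weight grading on $\wedge^*\mathfrak{g}^*$ an orthogonal decomposition; consequently $d$, its adjoint $\delta$, and $\Delta=d\delta+\delta d$ all preserve the weight.

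The crux is then to show that the harmonic forms are exactly the weight-zero forms. There are two points: (i) $d$ vanishes on the weight-zero subspace, since $d$ sends a weight-$\lambda$ covector $\eta\in\mathfrak{n}^*$ to $\pm\,\lambda\wedge\eta$ and hence, by the Leibniz rule, annihilates every monomial of total weight zero; and (ii) each subcomplex of nonzero weight $w$ is acyclic, via Cartan's formula $\mathcal{L}_a=d\iota_a+\iota_a d$, which on weight-$w$ forms equals multiplication by $w(a)$ and is therefore invertible for a suitable $a\in\mathfrak{a}$, giving a contracting homotopy. From (i) every weight-zero form is closed, and since $\operatorname{im}(d)$ has no weight-zero component it is also coclosed, hence harmonic; from (ii) every harmonic form is weight-zero. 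Because the weight of a wedge product is the sum of the weights, the weight-zero forms form a subalgebra of $\wedge^*\mathfrak{g}^*$, so the wedge of any two basic harmonic forms is again basic harmonic and the foliation is transversely formal. This weight-zero description is precisely the mechanism behind Kasuya's geometric-formality results for special solvmanifolds $G/\Gamma$, so one may alternatively invoke \cite{Kasuya2013GeomFormSolvmflds} to produce the left-invariant geometrically formal metric and then transfer it to the foliation.

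I expect the main obstacle to be the two Hodge-theoretic identifications rather than the algebra: on one side, justifying that for a minimal (hence $\kappa=0$) bundle-like metric with $G$ unimodular the basic Laplacian really coincides with the Lie-algebra Laplacian, so that basic harmonic forms are left-invariant; on the other, the acyclicity of the nonzero-weight subcomplexes, which is exactly where semisimplicity of $\varphi$ is essential, since for a non-semisimple action the generalized weight spaces would obstruct both the homotopy in (ii) and the vanishing of $d$ in (i). The realization of the prescribed transverse metric inside a minimal bundle-like metric is handled by tautness precisely as in the preceding propositions.
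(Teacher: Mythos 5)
Your proposal is correct, and your closing remark (``alternatively invoke Kasuya'') is in fact the paper's entire proof: the authors simply cite \cite[Theorem 1.1]{Kasuya2013GeomFormSolvmflds} and declare the argument analogous to that of Theorem \ref{nilpotentTheorem}, i.e.\ identify basic forms with left-invariant forms on $G$, realize the chosen transverse metric inside a bundle-like metric with minimal leaves via tautness, and let the cited solvmanifold result supply the formality. What you do differently is to prove the algebraic core yourself, directly on the Chevalley--Eilenberg complex $\wedge^{*}\mathfrak{g}^{*}$: the weight decomposition for the semisimple $\mathfrak{a}$-action, acyclicity of the nonzero-weight subcomplexes via the Cartan homotopy $\iota_{a}/w(a)$, vanishing of $d$ (and of its adjoint, by weight-orthogonality of the chosen metric) on the weight-zero part, hence harmonic $=$ weight-zero, which is a subalgebra. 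This buys two things. First, it is self-contained. Second, it avoids a genuine subtlety that the paper's transfer leaves implicit: unlike the nilmanifold case, the de Rham cohomology of a solvmanifold $G/\Gamma$ need not coincide with the Lie-algebra cohomology $H^{*}(\mathfrak{g})$ that computes the basic cohomology of a dense-leaved Lie foliation (Kasuya's harmonic algebra on $G/\Gamma$ can be strictly larger than the invariant one), so working on $\wedge^{*}\mathfrak{g}^{*}$ from the start is the cleaner formulation for this theorem. Two small points to tighten if you write this up: the weights of a semisimple action are in general complex, so the decomposition and the orthogonality choice should be made on $\wedge^{*}\mathfrak{g}^{*}\otimes\mathbb{C}$, with ``weight zero'' meaning vanishing total complex weight; and unimodularity (guaranteed by the lattice) is precisely the vanishing of the sum of all weights, which is what lets $\overline{\ast}$ send weight $w$ to weight $-w$ and makes the algebraic adjoint of $d$ agree with $\delta_{b}=\pm\overline{\ast}d\,\overline{\ast}$ --- without it, both halves of your Hodge-theoretic identification would fail.
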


\begin{proof}
Taking into account \cite[Theorem 1.1]{Kasuya2013GeomFormSolvmflds}, the
proof is analogous to that of Theorem \ref{nilpotentTheorem}.
\end{proof}

\vspace{1pt}Next, we show that transverse formality and maximality of the
rank of $H^{1}\left( M,\mathcal{F}\right) $ implies that the manifold has
the structure of an abelian Lie foliation.

Let $(M,{\mathcal{F}},g)$ be an ORFM. Assume that the foliation $\mathcal{F}$
is minimal, i.e. $\kappa =0$. Then the transverse volume form $\nu $ is a
basic harmonic $q$-form. Assume that $\mathcal{F}$ is transversely formal.

Let $\alpha $ and $\beta $ be basic harmonic 1-forms, and let $Y$ be any
vector field tangent to $\mathcal{F}$ . Then

\begin{eqnarray}
0 &=&d\beta (\alpha ^{\sharp },Y)=\alpha ^{\sharp }\beta (Y)-Y\beta (\alpha
^{\sharp })-\beta ([\alpha ^{\sharp },Y])  \notag \\
&=&-Yg(\beta ^{\sharp },\alpha ^{\sharp })-\beta ([\alpha ^{\sharp
},Y])=-\beta ([\alpha ^{\sharp },Y]).  \label{Lie1}
\end{eqnarray}

Let $\alpha, \beta, \gamma $ be basic harmonic 1-forms. Then

\begin{eqnarray}
0 &=&d\gamma (\alpha ^{\sharp },\beta ^{\sharp })  \notag \\
&=&\alpha ^{\sharp }\gamma (\beta ^{\sharp })-\beta ^{\sharp }\gamma (\alpha
^{\sharp })-\gamma ([\alpha ^{\sharp },\beta ^{\sharp }])  \notag \\
&=&\alpha ^{\sharp }g(\gamma ^{\sharp },\beta ^{\sharp })-\beta ^{\sharp
}g(\gamma ^{\sharp },\alpha ^{\sharp })-\gamma ([\alpha ^{\sharp },\beta
^{\sharp }])=-\gamma ([\alpha ^{\sharp },\beta ^{\sharp }]),  \label{Lie2}
\end{eqnarray}

\noindent as $g(\gamma ^{\sharp },\beta ^{\sharp })$ is a constant function,
since $\gamma \wedge \overline{\ast }\beta =g(\gamma ^{\sharp },\beta
^{\sharp })\nu $.

Recall that a codimension $q$ foliation $\left( M,\mathcal{F}\right) $ is
called a $\mathfrak{g}$-Lie foliation if there exist foliated vector fields $%
X_{1},...,X_{q}$ whose projections to $Q$ are linearly independent such that
for any $1\leq i,j\leq q$,  $\left[ X_{i},X_{j}\right] =%
\sum_{k}c_{ij}^{k}X_{k}\,\,\mathrm{mod}\mathcal{F}$, where the $c_{ij}^{k}$  are
the structure constants of the Lie algebra $\mathfrak{g}$. See \cite%
{Fedida1971LieFoliations}.

\begin{proposition}\label{LieFoliationProposition}
Let $(M,{\mathcal{F}},g)$ be an ORFM that is transversely formal with $%
\kappa $ basic harmonic. Suppose $\dim H^{1}(M,{\mathcal{F}})=q=\mathrm{codim%
}\left( \mathcal{F}\right) $, and let $\alpha _{1},...,\alpha _{q}$ be an
orthonormal basis of the space of basic harmonic 1-forms. Then the foliated
vector fields $\alpha _{1}^{\sharp },...,\alpha _{q}^{\sharp }$ commute mod $%
\mathcal{F}$. Therefore the foliation $\mathcal{F}$ is $\mathbb{R}^{q}$-Lie foliation.
\end{proposition}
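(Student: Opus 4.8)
The plan is to first reduce the problem to the minimal case and then exploit the fact that, under the maximality hypothesis, the basic harmonic one-forms assemble into a global orthonormal coframe of $Q^{\ast}$.

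First I would invoke Theorem \ref{maxRankImpliesMinimalThm}: since $\dim H^{1}(M,\mathcal{F})=q=\binom{q}{1}$ and $\kappa$ is basic harmonic, the foliation must be minimal, so $\kappa=0$. This is the crucial preliminary step, because the identities (\ref{Lie1}) and (\ref{Lie2}) preceding the statement were derived precisely under the standing assumption $\kappa=0$; only after establishing minimality are we entitled to use them. Next I would record that, by Lemma \ref{constantInnerProdLemma} (the transverse-formality input), the pointwise inner products $(\alpha_{i},\alpha_{j})=g(\alpha_{i}^{\sharp},\alpha_{j}^{\sharp})$ are globally constant, hence equal to $\delta_{ij}$ since the $\alpha_{i}$ are orthonormal. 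Because each $\alpha_{i}$ is basic, its dual $\alpha_{i}^{\sharp}$ lies in $Q$, and the constancy of the inner products guarantees that these vectors never degenerate: $\alpha_{1}^{\sharp},\dots,\alpha_{q}^{\sharp}$ form a global orthonormal frame of $Q$. The upshot I want is the frame criterion: a vector field $Z$ lies in $\Gamma(T\mathcal{F})$ if and only if $\alpha_{k}(Z)=g(\alpha_{k}^{\sharp},Z)=0$ for every $k$.

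With this frame in hand, I would apply (\ref{Lie1}) with $\alpha=\alpha_{i}$ and $\beta=\alpha_{k}$ ranging over the entire basis: for each $Y\in\Gamma(T\mathcal{F})$ this yields $\alpha_{k}([\alpha_{i}^{\sharp},Y])=0$ for all $k$, so by the frame criterion $[\alpha_{i}^{\sharp},Y]\in\Gamma(T\mathcal{F})$. Thus each $\alpha_{i}^{\sharp}$ is a foliated (projectable) vector field, which justifies the terminology in the statement. Similarly, applying (\ref{Lie2}) with $\gamma=\alpha_{k}$ over the whole basis gives $\alpha_{k}([\alpha_{i}^{\sharp},\alpha_{j}^{\sharp}])=0$ for all $k$, whence $[\alpha_{i}^{\sharp},\alpha_{j}^{\sharp}]\in\Gamma(T\mathcal{F})$; that is, the $\alpha_{i}^{\sharp}$ commute $\mathrm{mod}\,\mathcal{F}$.

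Finally, setting $X_{i}=\alpha_{i}^{\sharp}$ I obtain $q$ foliated vector fields whose projections to $Q$ are linearly independent and which satisfy $[X_{i},X_{j}]=0\,\mathrm{mod}\,\mathcal{F}$. Hence all structure constants $c_{ij}^{k}$ vanish, the transverse Lie algebra $\mathfrak{g}$ is abelian, and $\mathcal{F}$ is an $\mathbb{R}^{q}$-Lie foliation in the sense recalled before the statement. The main obstacle, and the only genuinely nonroutine point, is the passage from the pointwise statements (\ref{Lie1})--(\ref{Lie2}) to the conclusion that $[\,\cdot\,,\,\cdot\,]$ lands in $T\mathcal{F}$: this works only because the $\alpha_{i}^{\sharp}$ constitute a \emph{global} frame of $Q$, which in turn rests on the constancy of the inner products and on having first reduced to $\kappa=0$. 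Once these two facts are secured, the remainder is bookkeeping.
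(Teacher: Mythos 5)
Your proposal is correct and follows essentially the same route as the paper's proof: first invoke Theorem \ref{maxRankImpliesMinimalThm} to get $\kappa=0$ (which, as you rightly stress, is what licenses the use of (\ref{Lie1}) and (\ref{Lie2}), derived under the minimality assumption), then use $\cap_{k}\ker(\alpha_{k})=T\mathcal{F}$ together with (\ref{Lie1}) to see each $\alpha_{i}^{\sharp}$ is foliated and (\ref{Lie2}) to see the brackets are leafwise. Your added justification that the $\alpha_{i}^{\sharp}$ form a \emph{global} orthonormal frame of $Q$ (via Lemma \ref{constantInnerProdLemma}) is a useful elaboration of a point the paper leaves implicit, but it is not a different argument.
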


\begin{proof}
By Theorem \ref{maxRankImpliesMinimalThm}, $\kappa =0$. Observe that $\cap
_{k=1}^{q}\ker \left( \alpha _{k}\right) =T\mathcal{F}$. Since \ref{Lie1}
holds for any leafwise vector field $Y$, each $\alpha _{k}^{\#}$ is a
foliated vector field. By \ref{Lie2}, since $\gamma $ can be chosen to be
any $\alpha _{k}$, each $[\alpha _{j}^{\sharp },\alpha _{m}^{\sharp }]$ must
be a leafwise vector field and is thus  $0\,\,\mathrm{\mathrm{mod}}\mathcal{F}$. 
\end{proof}

\subsection{Transverse formality is not a transverse property}
\label{TransvFormalityNotTransversePropSubsection}

A property of Riemannian foliations $\left( M,\mathcal{F},g\right) $ is
called a \textbf{transverse property (or transverse invariant)} if the
property (or invariant) only depends on the metric induced on $TM\diagup T%
\mathcal{F}$ (the \emph{Riemannian foliation structure}) and not on the
leafwise metric or the choice of orthogonal space $\left( T\mathcal{F}%
\right) ^{\bot }$. Examples of transverse properties and invariants are
tautness, the basic Euler characteristic, eigenvalues of the twisted basic
Laplacian (cf. \cite{HabibRichardson2013_ModifiedDifferentials}). The following demonstrates that transverse formality of $\left( M,%
\mathcal{F},g\right) $ depends on the entire metric $g$ and is not simply
dependent on the Riemannian foliation structure.

\begin{proposition}
Let $\left( M,\mathcal{F},g\right) $ have basic harmonic mean curvature $%
\kappa $. Then for any other metric $g^{\prime }$ that restricts to the same
metric on the normal bundle, the basic projection of the mean curvature form 
$\kappa _{b}^{\prime }$ is basic harmonic if and only if $\kappa
_{b}^{\prime }=\kappa $.
\end{proposition}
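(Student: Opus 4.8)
The plan is to prove the two implications separately, with the reverse direction being essentially trivial and the forward direction requiring the cohomological invariance of $[\kappa_b]$. First I would record the easy direction: if $\kappa_b' = \kappa$, then since $\kappa$ is assumed basic harmonic by hypothesis, $\kappa_b'$ is automatically basic harmonic, so there is nothing to prove. The entire content of the proposition lies in the forward direction.

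For the forward direction, suppose $\kappa_b'$ is basic harmonic. The key structural fact, recalled in the Preliminaries from \cite{AlvarezLopez1992BasicComponentMeanCurvature}, is that $\kappa_b$ is always closed and its cohomology class $[\kappa_b] \in H^1(M,\mathcal{F})$ is \emph{independent of the choice of bundle-like metric}, provided the metric on the normal bundle $Q$ is fixed. Since $g$ and $g'$ restrict to the same metric on $Q$, the two classes agree: $[\kappa_b'] = [\kappa_b]$ in $H^1(M,\mathcal{F})$. Moreover, the basic Hodge theorem (also recalled in the Preliminaries) asserts that each basic cohomology class contains a \emph{unique} basic harmonic representative. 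Now $\kappa$ is basic harmonic by hypothesis and represents $[\kappa_b]$ (indeed for the metric $g$ we have $\kappa=\kappa_b$ when $\kappa$ is basic, and in general $[\kappa]=[\kappa_b]$), while $\kappa_b'$ is basic harmonic by assumption and represents the same class $[\kappa_b'] = [\kappa_b]$. By uniqueness of the basic harmonic representative, I conclude $\kappa_b' = \kappa$.

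The plan is therefore to chain together three facts, all available from the excerpt: (i) the metric-independence of the basic cohomology class $[\kappa_b]$ when the transverse metric $g_Q$ is held fixed; (ii) that $\kappa$ and $\kappa_b'$ both represent this single class; and (iii) uniqueness of basic harmonic representatives. The one point requiring a little care—and what I expect to be the main (mild) obstacle—is verifying that $[\kappa_b']=[\kappa]$ as \emph{basic} cohomology classes rather than merely as de Rham classes: I must invoke the precise statement that $[\kappa_b]$ lives in $H^1(M,\mathcal{F})$ and is a transverse invariant depending only on $g_Q$, so that the equality of transverse metrics $g_Q = g_Q'$ forces $[\kappa_b] = [\kappa_b']$ in $H^1(M,\mathcal{F})$. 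Once this is pinned down, the Hodge-theoretic uniqueness closes the argument immediately, and the proposition follows. This also fits the stated purpose of the subsection, since it shows that the basic harmonicity of the mean curvature (and hence the transverse-formality setup built on it) genuinely depends on the full metric $g$ and not on the Riemannian foliation structure alone.
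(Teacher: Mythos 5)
Your forward direction has a genuine gap: the phrase ``basic harmonic'' is metric-dependent, and you compare harmonicity with respect to two \emph{different} basic Laplacians as if they were one. The hypothesis is that $\kappa_b'$ is basic harmonic for the metric $g'$, i.e. $\Delta_b'\kappa_b'=0$, where $\delta_b'$ is the adjoint of $d$ built from $g'$; whereas $\kappa$ is basic harmonic for $g$, i.e. $\Delta_b\kappa=0$. Even though $g$ and $g'$ induce the same transverse metric (hence the same $\overline{\ast}$), the codifferentials differ, because $\delta_b=\pm\overline{\ast}(d-\kappa_b\wedge)\overline{\ast}$ involves the basic mean curvature; equivalently, the $L^2$ inner product $\int_M\alpha\wedge\overline{\ast}\beta\wedge\chi_{\mathcal F}$ involves the characteristic form $\chi_{\mathcal F}$, which changes with the leafwise metric. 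Concretely, writing $\kappa_b'=\kappa+dh$ for a basic function $h$ (this is the correct use of the \'Alvarez L\'opez invariance you cite), one has $\delta_b'=\delta_b+dh\lrcorner$, so $\Delta_b'\neq\Delta_b$ unless $h$ is constant --- which is precisely what you are trying to prove. The basic Hodge theorem gives a unique $\Delta_b$-harmonic representative and a unique $\Delta_b'$-harmonic representative of the class $[\kappa_b]$, but these are in general \emph{different} forms, so the uniqueness step does not close the argument. Indeed, if your reasoning were valid, it would show that the harmonic representative of a basic class is independent of the leafwise metric, i.e. that basic harmonicity is a transverse property --- contradicting the very point of the subsection in which this proposition sits.

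What is missing is an analytic argument, and this is what the paper supplies. Write $\kappa_b'=\kappa+dh$; the hypothesis $\left(d+\delta_b'\right)\kappa_b'=0$ expands, using $\delta_b'=\delta_b+dh\lrcorner$ together with $d\kappa=0$ and $\delta_b\kappa=0$, to $0=\Delta_b h+\left(dh,\kappa\right)+\left\vert dh\right\vert^2$. Integrating over $M$ and using $\langle\Delta_b h,1\rangle=\langle h,\Delta_b 1\rangle=0$ and $\langle dh,\kappa\rangle=\langle h,\delta_b\kappa\rangle=0$ gives $\Vert dh\Vert^2=0$, hence $h$ is constant and $\kappa_b'=\kappa$. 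Your step (i), the equality $[\kappa_b']=[\kappa]$ in $H^1(M,\mathcal F)$, is correct and is exactly what licenses writing $\kappa_b'=\kappa+dh$; your easy direction is also fine once you note that $\kappa_b'=\kappa$ forces $\delta_b'=\delta_b$, so that the two notions of harmonicity coincide in that case. But the forward direction cannot be finished by Hodge-theoretic uniqueness alone.
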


\begin{proof}
Let $\delta _{b}$ the basic adjoint of $d$ for the metric $g$; then $\delta
_{b}^{\prime }=\delta _{b}+dh\lrcorner $ is the basic adjoint of $d$ with
respect to the metric $g^{\prime }$ if $\kappa _{b}^{\prime }=\kappa +dh$;
we have that $h$ is a basic function. If $\kappa _{b}^{\prime }$ is basic $%
g^{\prime }$-harmonic, then 
\begin{eqnarray*}
0 &=&\left( d+\delta _{b}^{\prime }\right) \kappa _{b}^{\prime }=\left(
d+\delta _{b}+dh\lrcorner \right) \left( \kappa +dh\right) \\
&=&\left( d+\delta _{b}\right) dh+dh\lrcorner \kappa +dh\lrcorner dh \\
&=&\Delta _{b}h+\left( dh,\kappa \right) +\left\vert dh\right\vert ^{2}.
\end{eqnarray*}%
Integrating over the manifold, we obtain (with $\left\langle \bullet
~,~\bullet \right\rangle $ denoting the $L^{2}$ inner product)%
\begin{eqnarray*}
0 &=&\left\langle \Delta _{b}h,1\right\rangle +\left\langle dh,\kappa
\right\rangle +\left\langle dh,dh\right\rangle \\
&=&\left\langle h,\Delta _{b}1\right\rangle +\left\langle h,\delta
_{b}\kappa \right\rangle +\left\langle dh,dh\right\rangle \\
&=&0+0+\left\Vert dh\right\Vert ^{2},
\end{eqnarray*}%
which can happen only if $h$ is constant and thus $\kappa _{b}^{\prime
}=\kappa $.
\end{proof}

The following theorem demonstrates that on taut foliations, only certain
metrics can be transversely formal, and the choice depends on more than the
transverse metric.

\begin{theorem}
\label{TautCaseKb_zero_Theorem}Let $\left( M,\mathcal{F},g\right) $ be a
foliation of a closed manifold with a bundle-like metric that is
transversely formal. If $\left( M,\mathcal{F}\right) $ is taut, then the
basic component $\kappa _{b}$ of the mean curvature one-form must vanish.
\end{theorem}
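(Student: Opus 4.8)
The plan is to reduce the statement to the equivalence already recorded in the preliminaries, namely that the transverse volume form $\nu$ is basic harmonic if and only if $\kappa_b=0$; thus it suffices to prove that $\nu$ itself is basic harmonic. First I would use tautness in the form $H^q(M,\mathcal{F})\neq 0$ to produce a nonzero class in the top basic cohomology, whose unique basic harmonic representative $\omega$ is then a nonzero basic harmonic $q$-form.

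Next I would extract the constancy of the norm of $\omega$. Since transverse formality subsumes transverse $q$-formality, Lemma \ref{constantInnerProdLemma} applies in degree $r=q$ and shows that $(\omega,\omega)$ is constant on $M$; because $\omega\neq 0$ this constant is strictly positive, so $\omega$ has constant nonzero length and is in particular nowhere vanishing. As $\wedge^q Q^\ast$ has rank one, I can write $\omega=h\,\nu$ for a basic function $h$, with $\nu$ the unit transverse volume form. Then $|\omega|^2=h^2$ is a positive constant, so $h$ is a nonzero real constant (it cannot vanish or change sign by continuity on the connected manifold $M$).

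Finally, from $\omega=h\,\nu$ with $h$ a nonzero constant it follows that $\nu=h^{-1}\omega$ is basic harmonic; in particular $\delta_b\nu=0$. Since $\delta_b\nu=\overline{\ast}\kappa_b$ and $\overline{\ast}$ is an isomorphism on basic forms, this forces $\kappa_b=0$, which is the assertion. The only point requiring care is to confirm that the constant-length conclusion of Lemma \ref{constantInnerProdLemma} is genuinely available in top degree and that a nonzero constant-length basic harmonic $q$-form must be a constant multiple of $\nu$; both become immediate once one observes that the bundle $\wedge^q Q^\ast$ is one-dimensional, so no further estimate or integration is needed. I do not expect a substantive obstacle beyond this bookkeeping, since tautness supplies the harmonic generator and formality supplies its constant length directly.
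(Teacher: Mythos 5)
Your proof is correct, and it takes a route that is genuinely, if moderately, different from the paper's. The paper uses the \emph{other} characterization of tautness recorded in the preliminaries, namely $[\kappa_b]=0$: it writes $\kappa_b=df$ for a basic function $f$, verifies by a direct computation with the twisted codifferential that $e^f\nu$ is basic harmonic (it is closed, and $\delta_b\left(e^f\nu\right)=-e^f\overline{\ast}(df)+e^f\,df\lrcorner\nu=0$), and then invokes Lemma \ref{constantInnerProdLemma} to force $\left\vert e^f\nu\right\vert=e^f$ to be constant, whence $\kappa_b=df=0$. You instead use tautness as $H^q(M,\mathcal{F})\neq 0$, obtain an abstract nonzero basic harmonic $q$-form $\omega$ from the basic Hodge theorem, apply the same Lemma \ref{constantInnerProdLemma} to get constant length, identify $\omega$ with a nonzero constant multiple of $\nu$ via the rank-one bundle $\wedge^q Q^{\ast}$, and finish with the identity $\delta_b\nu=\overline{\ast}\kappa_b$. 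Both arguments hinge on the same key lemma, and in fact your $\omega$ is (up to scale) exactly the paper's $e^f\nu$; what differs is the input and the bookkeeping. Your version trades the paper's explicit computation for the Hodge theorem plus the stated formula for $\delta_b\nu$, which makes it cleaner and computation-free; the paper's version is more self-contained in that it exhibits the harmonic representative concretely and shows how the exactness of $\kappa_b$ interacts with the twisted codifferential. One small point to keep explicit in your write-up: the passage from $h^2$ constant to $h$ constant uses connectedness of $M$, which is implicit throughout the paper (e.g.\ in $H^0(M,\mathcal{F})\cong\mathbb{R}$), so it is available but worth flagging.
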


\begin{proof}
Suppose that $\left( M,\mathcal{F},g\right) $ is taut and transversely
formal, and suppose that the basic component $\kappa _{b}$ of mean curvature
satisfies $\kappa _{b}=df$ for a basic function $f$. Then the transverse
volume form $\nu $ has length $1$, and $e^{f}\nu $ is basic harmonic, since $%
d\left( e^{f}\nu \right) =0$ and 
\begin{eqnarray*}
\delta _{b}\left( e^{f}\nu \right) &=&\left( -\overline{\ast }d\overline{%
\ast }+df\lrcorner \right) \left( e^{f}\nu \right) =-\overline{\ast }d\left(
e^{f}\right) +e^{f}df\lrcorner \nu \\
&=&-e^{f}\overline{\ast }\left( df\right) +e^{f}df\lrcorner \nu
=-e^{f}df\lrcorner \nu +e^{f}df\lrcorner \nu =0.
\end{eqnarray*}%
Also, $\left\vert e^{f}\nu \right\vert =e^{f}$, and by Lemma \ref%
{constantInnerProdLemma}, $e^{f}$ must be constant, so that $\kappa _{b}=0$.
\end{proof}

\begin{corollary}
\label{q-1Corollary}Let $\left( M,\mathcal{F},g\right) $ be a taut,
codimension $q$ foliation of a closed manifold with a bundle-like metric
that is transversely formal. Then the wedge product of any two basic
harmonic forms is basic harmonic, and  $\dim H^{1}\left( M,\mathcal{F}\right) \neq q-1$.
\end{corollary}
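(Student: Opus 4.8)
The plan is to treat the two conclusions separately, with the vanishing of $\kappa_b$ doing most of the work. For the first claim, I would invoke Theorem \ref{TautCaseKb_zero_Theorem}: since $\left( M,\mathcal{F}\right)$ is taut and the metric is transversely formal, the basic component $\kappa_b$ of the mean curvature vanishes. With $\kappa_b=0$ the twisted differential $d_{\kappa_b}$ reduces to $d$ and $\delta_{\kappa_b}$ to $\delta_b$, so ``basic $\kappa_b$-harmonic'' and ``basic harmonic'' become the same notion, and $\overline{\ast}$ now maps basic harmonic forms to basic harmonic forms. The definition of transverse formality then reads verbatim that the wedge product of any two basic harmonic forms is basic harmonic; iterating, any finite wedge of basic harmonic forms is again basic harmonic.

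For the inequality $\dim H^{1}\left( M,\mathcal{F}\right)\neq q-1$, I would argue by contradiction, assuming $\dim H^{1}\left( M,\mathcal{F}\right)=q-1$. By the basic Hodge theorem this is the dimension of the space of basic harmonic $1$-forms, and by Lemma \ref{constantInnerProdLemma} the pointwise inner products of such forms are constant. The associated Gram matrix is therefore a constant symmetric matrix, and it is positive definite: a null combination would be a basic harmonic $1$-form of zero pointwise length, hence identically zero, contradicting linear independence. Orthonormalizing with respect to these constant inner products yields basic harmonic $1$-forms $\alpha_1,\dots,\alpha_{q-1}$ with $\left(\alpha_i,\alpha_j\right)=\delta_{ij}$ at every point, so that the $\alpha_i$ constitute a pointwise orthonormal frame of a rank $(q-1)$ subbundle of the rank $q$ bundle $Q^{\ast}$.

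I would then set $\omega=\alpha_1\wedge\cdots\wedge\alpha_{q-1}$, which is a basic harmonic $(q-1)$-form by the first part, and apply $\overline{\ast}$. Because $\kappa_b=0$, the form $\beta:=\overline{\ast}\,\omega$ is again basic harmonic, of degree $q-(q-1)=1$. Pointwise $\overline{\ast}$ is the Hodge star of the $q$-dimensional transverse inner product space, so $\beta$ is (up to sign) the unit covector completing $\alpha_1,\dots,\alpha_{q-1}$ to an orthonormal coframe of $Q^{\ast}$; thus $\left\vert\beta\right\vert\equiv 1$ and $\left(\beta,\alpha_i\right)=0$ at every point, whence $\beta$ is $L^{2}$-orthogonal to each $\alpha_i$. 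This produces a nonzero basic harmonic $1$-form independent of $\alpha_1,\dots,\alpha_{q-1}$, giving $q$ linearly independent basic harmonic $1$-forms and contradicting $\dim H^{1}\left( M,\mathcal{F}\right)=q-1$.

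The step requiring the most care is the passage from the cohomological hypothesis to the pointwise linear algebra on $Q^{\ast}$: one must check that the constant Gram matrix is nondegenerate, so that the $\alpha_i$ genuinely frame a subbundle, and that $\overline{\ast}\,\omega$ is pointwise of unit length and orthogonal to each $\alpha_i$. Both facts are controlled by Lemma \ref{constantInnerProdLemma} together with the observation, available only because $\kappa_b=0$, that $\overline{\ast}$ simultaneously preserves basic harmonicity and acts as a fiberwise isometry of $\wedge^{\ast}Q^{\ast}$; away from $\kappa_b=0$ this duality would instead land in the $\kappa_b$-harmonic space and the argument would break, which is precisely why tautness (via Theorem \ref{TautCaseKb_zero_Theorem}) is essential.
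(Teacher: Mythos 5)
Your proposal is correct and follows essentially the same route as the paper's proof: Theorem \ref{TautCaseKb_zero_Theorem} forces $\kappa_b=0$, which collapses the twisted and untwisted harmonic theories so that transverse formality becomes closure of basic harmonic forms under wedge and $\overline{\ast}$, and the extra basic harmonic one-form is then produced as $\overline{\ast}\left(\alpha_1\wedge\cdots\wedge\alpha_{q-1}\right)$. The only difference is that you spell out the pointwise linear algebra (positive definiteness of the constant Gram matrix via Lemma \ref{constantInnerProdLemma}, orthonormalization, and the pointwise orthogonality of $\overline{\ast}\,\omega$ to the $\alpha_i$) that the paper leaves implicit, which is a welcome clarification rather than a new method.
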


\begin{proof}
By the last theorem, the basic Laplacian $\Delta _{b}$ and the twisted basic
Laplacian  $\Delta _{\kappa _{b}}$  coincide as operators, since $\kappa
_{b}=0$. Then the wedge product of any two basic harmonic forms is basic
harmonic, and $\overline{\ast }$ maps basic harmonic forms to basic harmonic
forms. Then we see that if $\alpha _{1},...,\alpha _{q-1}$ are linearly
independent basic harmonic one-forms, then $\overline{\ast }\left( \alpha
_{1}\wedge ...\wedge \alpha _{q-1}\right) $ is an additional basic harmonic
one-form that is not a linear combination of the others.
\end{proof}

The following    results show that in general the transverse
formality property depends on more than just the transverse metric.

\begin{proposition}
Let $(M,g,\mathcal{F})$ be a nontaut Riemannian, transversely formal foliation that has basic harmonic mean curvature. We assume further that the leaves are not dense, and that the mean curvature has some nonzero component orthogonal to the leaf closures at some point. Then there exists a bundle-like metric $g'$ that induces the same transverse Riemannian structure as $g$, such that $(M,g',\mathcal{F})$ is not transversely formal.
\end{proposition}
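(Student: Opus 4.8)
The plan is to perturb the leafwise part of $g$ while keeping the transverse metric fixed, arranging the basic mean curvature to become non-harmonic, and then to show that the basic harmonic representative of the (nonzero) class $[\kappa_b]$ loses its constant length. First I would use the leaf closure structure: since the leaves are not dense, the leaf closures form the leaves of a foliation $\overline{\mathcal F}$ with $T\mathcal F\subseteq T\overline{\mathcal F}$, and any basic function is constant on each leaf closure by continuity. Hence the differential of a basic function annihilates $T\overline{\mathcal F}$, i.e. $dh^{\#}\in(T\overline{\mathcal F})^{\perp}$. Because $\kappa^{\#}$ is assumed to have a nonzero component orthogonal to the leaf closures at some point, I can choose a basic function $h_0$ with $(dh_0,\kappa)=\kappa^{\#}(h_0)\not\equiv 0$. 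For small $\epsilon\in\mathbb R$, let $g_\epsilon$ be the bundle-like metric obtained from $g$ by the leafwise conformal rescaling $g_{T\mathcal F}\mapsto e^{-2\epsilon h_0/p}g_{T\mathcal F}$ (with $p=\dim\mathcal F$), leaving $g_Q$ and the splitting $TM=T\mathcal F\oplus Q$ unchanged. By Rummler's formula, or equivalently by the preceding Proposition, its basic mean curvature is $\kappa_b^{(\epsilon)}=\kappa+\epsilon\,dh_0$, and the associated basic codifferential is $\delta_b^{(\epsilon)}=\delta_b+\epsilon\,dh_0\lrcorner$. Since $g_\epsilon$ and $g$ induce the same transverse Riemannian structure and the foliation stays nontaut, the class $[\kappa_b]\in H^1(M,\mathcal F)$ is unchanged and nonzero.

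Next I would examine the $g_\epsilon$-basic harmonic representative $\eta_\epsilon$ of $[\kappa_b]$. Writing $\eta_\epsilon=\kappa+d\psi_\epsilon$ with $\psi_\epsilon$ basic, the condition $\delta_b^{(\epsilon)}\eta_\epsilon=0$ reads
\[
\Delta_b\psi_\epsilon+\epsilon\,(dh_0,d\psi_\epsilon)+\epsilon\,(dh_0,\kappa)=0,
\]
using $\delta_b\kappa=0$. Since $\delta_b^{(\epsilon)}$ depends smoothly (linearly) on $\epsilon$, the basic Hodge projection does too, so $\psi_\epsilon=\epsilon\psi_1+O(\epsilon^2)$ with $\Delta_b\psi_1=-(dh_0,\kappa)$, and
\[
|\eta_\epsilon|^2=|\kappa|^2+2\epsilon\,(\kappa,d\psi_1)+O(\epsilon^2)=c^2+2\epsilon\,\kappa^{\#}(\psi_1)+O(\epsilon^2),
\]
where $c^2=|\kappa|^2$ is constant because $g$ is transversely formal (Lemma \ref{constantInnerProdLemma}). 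If $g_\epsilon$ were transversely formal, then $\eta_\epsilon$, being a basic harmonic $1$-form, would have constant length by Lemma \ref{constantInnerProdLemma}; so it suffices to show that $\kappa^{\#}(\psi_1)$ is not identically zero, for then the first-order term makes $|\eta_\epsilon|$ nonconstant for all sufficiently small $\epsilon\neq 0$.

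The crux is therefore the nonvanishing of $\kappa^{\#}(\psi_1)$, and here the hypotheses enter decisively. Since $g$ has basic mean curvature and $\kappa$ is a basic harmonic $1$-form, Lemma \ref{basicHarmonicOneFormsAreHarmonicLemma} gives $\delta\kappa=0$, so $\kappa^{\#}$ is divergence-free for the Riemannian volume of $M$ and integration by parts carries no divergence term. Using $\Delta_b\psi_1=-\kappa^{\#}(h_0)$ I compute
\[
\langle \kappa^{\#}(\psi_1),h_0\rangle=-\langle \psi_1,\kappa^{\#}(h_0)\rangle=\langle \psi_1,\Delta_b\psi_1\rangle=\|d\psi_1\|^2,
\]
which is strictly positive because $\psi_1$ is nonconstant (as $\kappa^{\#}(h_0)\not\equiv0$). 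Hence $\kappa^{\#}(\psi_1)\not\equiv0$, and since it has zero integral it is nonconstant; this yields the desired failure of constant length, so by Lemma \ref{constantInnerProdLemma} the metric $g_\epsilon$ is not transversely formal. The main obstacle I anticipate is the analytic justification of the perturbation expansion — establishing the smooth dependence of the basic harmonic representative $\eta_\epsilon$ on $\epsilon$ and identifying the first-order term $\psi_1$ — together with the precise selection of $h_0$ adapted to the leaf closure decomposition so that $(dh_0,\kappa)\not\equiv0$; once these are in place, the integration-by-parts identity above closes the argument cleanly.
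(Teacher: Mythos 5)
Your construction coincides with the paper's: a leafwise conformal rescaling by a basic function built from the leaf-closure structure so that $(\kappa,dh_0)\not\equiv 0$, keeping $g_Q$ and the splitting fixed, combined with the criterion (Lemma \ref{constantInnerProdLemma}) that transverse formality forces the basic harmonic representative of $[\kappa_b]$ to have constant length. Where you diverge is in extracting the contradiction: the paper works at a fixed finite deformation, while you linearize in $\epsilon$. Your ingredients are individually sound: $\kappa_b^{(\epsilon)}=\kappa+\epsilon\,dh_0$ and $\delta_b^{(\epsilon)}=\delta_b+\epsilon\,dh_0\lrcorner$ are correct, the equation $\Delta_b\psi_1=-(dh_0,\kappa)$ is solvable because $\int_M(dh_0,\kappa)\,dv_g=\langle h_0,\delta_b\kappa\rangle=0$, and the chain $\langle\kappa^{\#}(\psi_1),h_0\rangle=-\langle\psi_1,\kappa^{\#}(h_0)\rangle=\langle\psi_1,\Delta_b\psi_1\rangle=\Vert d\psi_1\Vert^2>0$ is legitimate, since $\delta\kappa=0$ by Lemma \ref{basicHarmonicOneFormsAreHarmonicLemma} and $\delta_b$ is the adjoint of $d$ on basic forms.

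The gap is the one you flag yourself, and it is genuine: the expansion $\psi_\epsilon=\epsilon\psi_1+O(\epsilon^2)$ with a remainder that is uniform \emph{pointwise} (you need $C^0$, not $L^2$, control of the error to conclude that $|\eta_\epsilon|^2$ is nonconstant). This is not off-the-shelf: $\Delta_b$ is not an elliptic operator on $M$, so standard elliptic perturbation theory does not apply directly, and the $L^2$ structure on basic forms itself varies with $\epsilon$ because the leafwise volume (hence $\chi_{\mathcal{F}}$ and $dv_{g_\epsilon}$) changes; one would have to run the transversally elliptic machinery of \cite{ParkRichardson1996_BasicLaplacian} with parameter dependence. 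More importantly, the detour is unnecessary: your own identities close the argument at a single fixed deformation, which is exactly the paper's route. Fix $\epsilon$ and let $\eta=\kappa+d\psi$ be the $g_\epsilon$-basic harmonic representative (basic Hodge theorem for one metric, no families needed). If $g_\epsilon$ were transversely formal, then $|\eta|^2=|\kappa|^2+2(\kappa,d\psi)+|d\psi|^2$ would be constant; evaluating at a maximum of the basic function $\psi$, where $d\psi=0$, shows the constant is $|\kappa|^2$, so $2(\kappa,d\psi)+|d\psi|^2\equiv 0$; integrating over $(M,g)$ and using $\int_M(\kappa,d\psi)\,dv_g=\langle\delta_b\kappa,\psi\rangle=0$ forces $d\psi=0$, hence $\eta=\kappa$; but then $0=\delta_b^{(\epsilon)}\kappa=\delta_b\kappa+\epsilon(dh_0,\kappa)=\epsilon(dh_0,\kappa)\not\equiv 0$, a contradiction. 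One critical-point observation plus the integration by parts you already perform replaces the entire perturbation analysis.
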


\begin{proof} 
With the given assumptions, let $\kappa$ be the basic harmonic mean curvature one-form of $(M,g,\mathcal{F})$, and let $\overline{L}$ denote a leaf closure such that $\kappa^\sharp$ is not contained in $T\overline{L}$ at
some point $x_0$ of $\overline{L}$. Then, for $x\in M$, we let $r=r(x)$ denote the distance from  $x$ to $\overline L$ , and we construct a basic function $f$ as a function of $r$ such that  $f(x)>0$ for $x\in \overline L$ and $f(x)=0$ for $r(x)\ge\epsilon$ for a fixed, sufficiently small $\epsilon>0$. We can then make these choices of $\epsilon$ and $f$ such that $\kappa^\sharp(f)=(\kappa,df)$ is nonzero (and basic).

Next, let $g_L$ and $g_Q$ denote the restrictions of $g$ to $T\mathcal{F}$ and to $Q$, respectively. We define 
the bundle-like metric  $g'=e^{-f}g_L\oplus g_Q$. Now, let $\alpha=\kappa+dh$ be the basic harmonic representative of $\kappa$ in the metric $g'$, for some basic function $h$. We note that $|\kappa|$ is constant with respect to both the metrics $g$ and $g'$. 

Suppose now that $g'$ is also transversely formal, so that $\alpha$ has constant length (with respect to both $g$ and $g'$). Then $|\alpha|^2=|\kappa|^2+|dh|^2+2(\kappa,dh)$ is constant, implying that $|dh|^2+2(\kappa,dh)$ is constant. At a critical point of $h$ this constant is zero, so in fact $|dh|^2=-2(\kappa,dh)$ on $M$. Integrating over $(M,g)$, we see that $\int_M |dh|^2 dv_g=-2\int_M (\kappa,dh)dv_g=-2\int_M (\delta_b\kappa,h)dv_g=0$, so that $h$ is constant, and $\alpha=\kappa$. Since $\delta_b'\alpha=0$, and  $\delta_b'=\delta_b+\frac {\dim\mathcal{F}}2 df\lrcorner$, we have 
$0=\delta_b'\alpha=\delta_b\kappa+\frac {\dim\mathcal{F}}2 (\kappa,df)=\frac {\dim\mathcal{F}}2 (\kappa,df)\neq 0$. This is a contradiction, so we have that $(M,g',\mathcal{F})$ is not transversely formal.
\end{proof}

\begin{remark} Note that by examining the proof above, we can find bundle-like metrics arbitrarily close to transversely formal metrics that are not transversely formal and yet induce the same transverse metric. 
We see a specific example of this situation for the Carri\`{e}re flow $\left( M,\mathcal{F}%
,g\right) $ of Section \ref{Carriere}, where multiplying the stated leafwise metric by a general positive 
basic function causes the transverse formality property to fail.
\end{remark}

The following corollary shows the existence of bundle-like metrics that are not transversely formal on all taut foliations that do not have dense leaves.
\begin{proposition}
Let $(M,g,\mathcal{F})$ be a taut Riemannian, transversely formal foliation. We assume further that the leaves are not dense. Then there exists a bundle-like metric $g'$ that induces the same transverse Riemannian structure as $g$, such that $(M,g',\mathcal{F})$ is not transversely formal.
\end{proposition}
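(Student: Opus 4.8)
The plan is to reduce everything to Theorem \ref{TautCaseKb_zero_Theorem}, which says that on a taut foliation any transversely formal bundle-like metric must have $\kappa_b=0$. The idea is to produce a competing bundle-like metric $g'$, inducing the same transverse Riemannian structure, whose basic mean curvature is a nonzero exact form; this alone will prevent $g'$ from being transversely formal. First I would record the two inputs coming from the hypotheses. Since $(M,\mathcal{F})$ is taut and $g$ is transversely formal, Theorem \ref{TautCaseKb_zero_Theorem} gives $\kappa_b=0$. I would also note that tautness is a transverse invariant: it is equivalent to $H^q(M,\mathcal{F})\neq 0$ (equivalently $[\kappa_b]=0$), a condition depending only on the metric on $Q$. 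Hence every bundle-like metric inducing the same transverse metric keeps $(M,\mathcal{F})$ taut, so Theorem \ref{TautCaseKb_zero_Theorem} will remain applicable to $g'$.

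Next I would use non-denseness of the leaves to manufacture a non-constant basic function. Since the leaves are not dense, some leaf closure $\overline{L}$ is a proper closed saturated subset of $M$, and basic functions are exactly the smooth functions that are constant on leaf closures. Thus non-constant basic functions exist; concretely, as in the preceding proposition one builds a basic $f$ as a function of the (basic) distance to $\overline{L}$, positive on $\overline{L}$ and vanishing outside a small neighborhood. Using such an $f$, I would define the bundle-like metric $g'=e^{-f}g_L\oplus g_Q$, where $g_L$ and $g_Q$ are the restrictions of $g$ to $T\mathcal{F}$ and $Q$. Because $e^{-f}>0$ and only the leafwise part is changed, $g'$ is a genuine bundle-like metric, and since $g_Q$ (the holonomy-invariant transverse metric) is untouched, $g'$ induces exactly the same transverse Riemannian structure as $g$.

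The core computation is to check that $\kappa_b'\neq 0$. Exactly as in the proof of the preceding proposition, the leafwise conformal rescaling by $e^{-f}$ with $f$ basic changes the basic codifferential according to $\delta_b'=\delta_b+\tfrac{\dim\mathcal{F}}{2}\,df\lrcorner$. Applying both sides to the transverse volume form $\nu$, which is unchanged because $g_Q$ is unchanged, and using the identity $\delta_b\nu=\kappa_b\lrcorner\nu$ from the preliminaries, I obtain $\kappa_b'\lrcorner\nu=\delta_b'\nu=\bigl(\kappa_b+\tfrac{\dim\mathcal{F}}{2}\,df\bigr)\lrcorner\nu$, whence $\kappa_b'=\kappa_b+\tfrac{\dim\mathcal{F}}{2}\,df=\tfrac{\dim\mathcal{F}}{2}\,df$ since $\kappa_b=0$. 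As $f$ is non-constant, $df\neq 0$ and therefore $\kappa_b'\neq 0$. (If one prefers, this same formula follows from Rummler's formula by noting that the leafwise volume scales as $\chi_{\mathcal{F}}'=e^{-\frac{\dim\mathcal{F}}{2}f}\chi_{\mathcal{F}}$, which forces $\kappa'=\kappa+\tfrac{\dim\mathcal{F}}{2}\,df$ modulo leafwise terms that project to zero under $P_b$.)

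Finally, suppose for contradiction that $g'$ were transversely formal. Since $(M,\mathcal{F})$ remains taut for $g'$, Theorem \ref{TautCaseKb_zero_Theorem} would force $\kappa_b'=0$, contradicting $\kappa_b'=\tfrac{\dim\mathcal{F}}{2}\,df\neq 0$. Hence $(M,g',\mathcal{F})$ is not transversely formal, as required. The two points that require care are the existence of a non-constant basic function, which is exactly where non-denseness of the leaves enters through the nontrivial structure of the leaf-closure space, and the change-of-metric formula for $\delta_b$; I expect the latter to be the main technical step, but it can be reused verbatim from the preceding proposition rather than re-derived.
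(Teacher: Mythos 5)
Your proposal is correct and follows essentially the same route as the paper's proof: apply Theorem \ref{TautCaseKb_zero_Theorem} to conclude $\kappa_b=0$ for $g$, rescale the leafwise metric by a nonconstant basic function (which exists precisely because the leaves are not dense) to produce $g'$ with the same transverse metric but $\kappa_b'\neq 0$, and then apply Theorem \ref{TautCaseKb_zero_Theorem} again to rule out transverse formality of $g'$. The only difference is that you fill in details the paper leaves implicit --- the construction of the basic function from a proper leaf closure and the formula $\kappa_b'=\kappa_b+\tfrac{\dim\mathcal{F}}{2}\,df$ --- both of which are consistent with the computations in the preceding (nontaut) proposition.
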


\begin{proof}
By Theorem \ref{TautCaseKb_zero_Theorem}, any taut, transversely formal foliation must have $\kappa_b=0$.
By multiplying the leafwise metric by any nonconstant basic function, we obtain a new bundle-like metric $g'$ with $\kappa_b\neq 0$ and such that $g$ and $g'$ induce the same transverse Riemannian structure. By  Theorem \ref{TautCaseKb_zero_Theorem}, $(M,g',\mathcal{F})$ is not transversely formal.
\end{proof}

If $\left( M,\mathcal{F}\right) $ is a foliation with dense leaves, then
transverse formality is indeed a transverse property.

\begin{proposition}
Let $\left( M,\mathcal{F},g\right) $ be a foliation with dense leaves with
bundle-like metric that is transversely formal. Then any other metric $%
g^{\prime }$ that induces the same transverse metric on $TM\diagup T\mathcal{%
F}$ as $g$ is also transversely formal.
\end{proposition}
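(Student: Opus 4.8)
The plan is to show that for a foliation with dense leaves, the transverse formality condition is detected entirely by data on $Q$, so it cannot depend on the leafwise part of the metric. The key structural fact is that when $\mathcal{F}$ has dense leaves, the basic mean curvature $\kappa_b$ is a \emph{harmonic} (hence parallel, and in particular constant-coefficient) transverse one-form whose associated data is determined by the transverse metric $g_Q$. More precisely, the leaf closure being all of $M$ forces any basic function to be constant, and basic forms on $(M,\mathcal{F})$ are highly rigid (in the transverse Lie setting they correspond to left-invariant forms on $G$). The first step is therefore to argue that if $g'$ induces the same metric $g_Q$ on $TM/T\mathcal{F}$ as $g$, then $g'$ and $g$ determine the same $\kappa_b$, the same basic Hodge star $\overline{\ast}$, and hence the same twisted differential $d_{\kappa_b}$, basic codifferential $\delta_b$, and the same operators $\Delta_b$ and $\Delta_{\kappa_b}$ on $\Omega^{*}(M,\mathcal{F})$.

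Second, I would invoke the density of the leaves to pin down $\kappa_b$. Under the change $g\mapsto g'$ with fixed transverse metric, the general formula relates the two basic mean curvatures by $\kappa_b' = \kappa_b + dh$ for a basic function $h$ (this is exactly the relationship used repeatedly in the preceding results, e.g. in the proof concerning $\kappa_b'=\kappa+dh$). Since the leaves are dense, every basic function is constant, so $dh=0$ and therefore $\kappa_b'=\kappa_b$. This is the crucial place where the density hypothesis enters: it collapses the gauge freedom $h$ that in the non-dense case (as in the two preceding propositions) was precisely what allowed transverse formality to be destroyed.

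Third, once $\kappa_b'=\kappa_b$ and $g'_Q = g_Q$, all the operators entering the definition of transverse formality coincide for $g$ and $g'$. The spaces of basic harmonic forms (kernel of $\Delta_b$) and basic $\kappa_b$-harmonic forms (kernel of $\Delta_{\kappa_b}$) are therefore identical for the two metrics, and the wedge product and the condition ``$\alpha\wedge\beta$ is basic $\kappa_b$-harmonic'' are phrased entirely in terms of these common data. Hence $(M,\mathcal{F},g')$ is transversely formal if and only if $(M,\mathcal{F},g)$ is, and since $g$ is assumed transversely formal, so is $g'$.

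The main obstacle is verifying carefully that the basic Hodge star $\overline{\ast}$ and $\delta_b$ depend only on $g_Q$ and $\kappa_b$, rather than on the full metric $g$. A priori $\overline{\ast}$ is defined through $\ast$ and $\chi_{\mathcal{F}}$ via $\overline{\ast}\alpha=(-1)^{p(q-r)}\ast(\alpha\wedge\chi_{\mathcal{F}})$, which involves the leafwise geometry through $\chi_{\mathcal{F}}$; the point is that, acting on basic forms, the result lands in $\wedge^{*}Q^{*}$ and is computed purely from the transverse orientation and $g_Q$, so it is unchanged under $g\mapsto g'$. Combined with the identity $\delta_b=(-1)^{q(r+1)+1}\overline{\ast}(d-\kappa_b\wedge)\overline{\ast}$ and the now-established $\kappa_b'=\kappa_b$, this gives $\delta_b'=\delta_b$ on basic forms, completing the argument.
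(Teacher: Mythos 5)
Your proof is correct and follows essentially the same route as the paper: the basic mean curvature changes by $\kappa_b' = \kappa_b + dh$ for a basic function $h$, density of the leaves forces $h$ to be constant so $\kappa_b'=\kappa_b$, and then the basic codifferential and Laplacians are the same operators on basic forms for both metrics, so transverse formality transfers. One remark: the claim in your opening paragraph that $\kappa_b$ is \emph{harmonic}, hence parallel, is neither justified nor used anywhere in your argument and should be deleted, while your verification that $\overline{\ast}$ acting on basic forms depends only on the transverse metric and orientation is a worthwhile detail that the paper leaves implicit in the sentence ``the basic codifferential must be the same for both metrics.''
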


\begin{proof}
Assume $\left( M,\mathcal{F},g\right) $ is as given, with $g^{\prime }$ the
new metric. The mean curvature must change by $\kappa _{b}^{\prime }=\kappa
_{b}+dh$ for a basic function $h$, which is necessarily constant, so that $%
\kappa _{b}^{\prime }=\kappa _{b}$.   Note that the basic codifferential $\delta _{b}$ must be the same for both
metrics.   Thus, the basic Laplacian is the same operator on basic forms in
both metrics, and thus the property of transverse formality is preserved.
\end{proof}

\section{Transverse formality for Riemannian flows, $K$-contact and Sasaki
manifolds} \label{FlowSection}

Riemannian flows, that is Riemannian foliations of $1$-dimensional leaves, have been the object of in depth research for several decades due to numerous applications. The rich theory permits us to obtain interesting results on geometric transverse formality as well as to test some general conjectures.   

\begin{notation}
Let $(M^{q+1},\mathcal{F},g)$ be an ORFM. If $\dim \mathcal{F}=1$ with the
foliation be given by the integral curves of a unit vector field $\xi $, we
say that $(M^{q+1},\mathcal{F},g,\xi )$ is a 1-ORFM.  Throughout this section, we will also denote by $H^*_{a\kappa_b}(M,\mathcal{F})$ the basic cohomology of the flow associated with the twisted differential $d_{a\kappa_b}:=d-a\kappa_b\wedge$ for $a\in \mathbb{R}$. 
\end{notation}

\subsection{General results for Riemannian flows}

Recall that Rummler's formula   reduces for Riemannian flows to   %
\begin{equation*}
d\xi ^{\flat }=-\kappa \wedge \xi ^{\flat }+\varphi _{0},
\end{equation*}%
where, as before, $\varphi _{0}=\sum_{j=1}^{q}e^{j}\wedge \nabla _{e_{j}}\xi
^{^{\flat }}\in \Gamma \left( \Lambda ^{2}Q^{\ast }\right) $ is the Euler
form and $\kappa =\left( \nabla _{\xi }\xi \right) ^{^{\flat }}\in \Gamma
\left( Q^{\ast }\right) $ is the mean curvature one-form. Here $%
\{e_{j}\}_{j=1,\ldots ,q}$  is an orthonormal frame of $\Gamma (Q)$ with
corresponding coframe$\{e^{j}\}$.   We have the following crucial lemma
for Riemannian flows. 
\begin{lemma}
\label{EulerFormBasicRiemFlow}(\cite[Lemma 2.4]%
{HabibRichardson2018RiemFlowsAdiabatic}) Let $(M,\mathcal{F},g,\xi )$ be a
1-ORFM. If the mean curvature $\kappa $ of the flow is basic, then $d\varphi
_{0}=-\kappa \wedge \varphi _{0}$ and, thus, the Euler form $\varphi _{0}$
is a basic $2$-form.
\end{lemma}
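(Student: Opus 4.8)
The plan is to extract the identity by differentiating Rummler's formula $d\xi^{\flat}=-\kappa\wedge\xi^{\flat}+\varphi_{0}$ and exploiting $d^{2}=0$. First I would apply $d$ to both sides, using the Leibniz rule $d(\kappa\wedge\xi^{\flat})=d\kappa\wedge\xi^{\flat}-\kappa\wedge d\xi^{\flat}$, to obtain $0=-d\kappa\wedge\xi^{\flat}+\kappa\wedge d\xi^{\flat}+d\varphi_{0}$. Next I would substitute Rummler's formula a second time into the middle term and use $\kappa\wedge\kappa=0$, which collapses $\kappa\wedge d\xi^{\flat}$ to $\kappa\wedge\varphi_{0}$. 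This yields $0=-d\kappa\wedge\xi^{\flat}+\kappa\wedge\varphi_{0}+d\varphi_{0}$.

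The crucial input is that a basic mean curvature form is closed: since $\kappa$ is basic we have $\kappa=P_{b}\kappa=\kappa_{b}$, and $\kappa_{b}$ is closed by \cite{AlvarezLopez1992BasicComponentMeanCurvature}, so $d\kappa=0$. Feeding this in immediately gives the desired identity $d\varphi_{0}=-\kappa\wedge\varphi_{0}$. I expect this step---correctly recognizing that $d\kappa$ vanishes outright rather than merely being horizontal---to be the only real subtlety; everything else is algebra driven by $d^{2}=0$ and $\kappa\wedge\kappa=0$.

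Finally, to conclude that $\varphi_{0}$ is a basic two-form, I would verify both defining conditions. Horizontality $\xi\lrcorner\varphi_{0}=0$ holds since $\varphi_{0}\in\Gamma(\Lambda^{2}Q^{\ast})$ by construction. For the remaining condition I would contract the identity just proved with $\xi$: using the Leibniz rule for interior products, $\xi\lrcorner d\varphi_{0}=-\xi\lrcorner(\kappa\wedge\varphi_{0})=-(\xi\lrcorner\kappa)\varphi_{0}+\kappa\wedge(\xi\lrcorner\varphi_{0})$, and both terms vanish because $\kappa$ and $\varphi_{0}$ are horizontal. Hence $\xi\lrcorner d\varphi_{0}=0$, so $\varphi_{0}$ is basic.
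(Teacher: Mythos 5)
Your proposal is correct and follows essentially the same route as the paper: both differentiate Rummler's formula, use $d^2=0$, the closedness of the basic mean curvature form ($\kappa=\kappa_b$ is closed by \'Alvarez L\'opez's result), and $\kappa\wedge\kappa=0$ to get $d\varphi_0=-\kappa\wedge\varphi_0$, and then verify basicness by contracting with $\xi$ exactly as the paper does. The only cosmetic difference is that the paper writes the computation as $d\varphi_0=d(\kappa\wedge\xi^{\flat})$ starting from $\varphi_0=d\xi^{\flat}+\kappa\wedge\xi^{\flat}$, whereas you apply $d$ to Rummler's formula and rearrange; these are the same calculation.
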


\begin{proof}
Using Rummler's formula, we compute 
\begin{equation*}
d\varphi_0=d(\kappa\wedge\xi^{\flat })=d\kappa\wedge\xi^{\flat }-\kappa\wedge
d\xi^{\flat }=-\kappa\wedge\varphi_0.
\end{equation*} 
In the last equality, we used the fact that $\kappa$ is a closed form, as it
is basic.   Hence to show that $\varphi_0$ is basic, we compute  
\begin{equation*}
\xi\lrcorner d\varphi_0=-\xi\lrcorner (\kappa\wedge\varphi_0)=0,
\end{equation*}
since $\varphi_0$ is a $2$-form on $Q$. 
\end{proof}

\begin{remark}
The metric may always be chosen so that $\kappa $ is basic, by \cite%
{Dominguez1998FinitenessTensenessThmsRiemFols}. The conclusion of the Lemma
above is in general false if the mean curvature is not basic.
\end{remark}

In the following, we compute the codifferential of the basic form $\varphi
_{0}$, in two different ways. We have

\begin{proposition}
\label{CodifferentialOfEulerFormProposition}Let $(M,\mathcal{F},g,\xi )$ be
a 1-ORFM. Assume that the mean curvature $\kappa $ is basic and harmonic.
Then $\delta _{b}\varphi _{0}=\left( \Delta -\left\vert \varphi
_{0}\right\vert ^{2}-|\kappa |^{2}\right) \xi ^{\flat }$.
\end{proposition}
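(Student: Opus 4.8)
The plan is to relate the basic codifferential of $\varphi_{0}$ to the \emph{ordinary} Hodge Laplacian of the characteristic form $\xi^{\flat}$ on $M$, and then to peel off the leafwise contributions. I write $\{e_{i}\}_{i=1,\ldots,q}$ for a local orthonormal frame of $Q$, so that $\{\xi,e_{1},\ldots,e_{q}\}$ is an orthonormal frame of $TM$, and use $\delta=-\sum_{a}E_{a}\lrcorner\nabla^{M}_{E_{a}}$. Two preliminary facts set up the computation. First, since $\xi$ has unit length and the metric is bundle-like, $\operatorname{div}\xi=0$: the $\xi$-diagonal term of the trace of $\nabla^{M}\xi$ vanishes because $|\xi|=1$, while the trace over $Q$ is the trace of the symmetric part of $\nabla^{M}\xi|_{Q}$, which is zero because $\mathcal{L}_{\xi}g_{Q}=0$. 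Hence $\delta\xi^{\flat}=0$ and $\Delta\xi^{\flat}=\delta\, d\xi^{\flat}$. Second, because $\kappa$ is basic harmonic, Lemma \ref{basicHarmonicOneFormsAreHarmonicLemma} shows it is in fact harmonic, so $\delta\kappa=0$ in addition to $d\kappa=0$.

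Next, using Rummler's formula in the form $d\xi^{\flat}=\varphi_{0}-\kappa\wedge\xi^{\flat}$, I would write $\Delta\xi^{\flat}=\delta\varphi_{0}-\delta(\kappa\wedge\xi^{\flat})$ and expand the last term with $\delta=-\sum_{a}E_{a}\lrcorner\nabla^{M}_{E_{a}}$. The pieces carrying $\delta\kappa$ and $\delta\xi^{\flat}$ drop out, leaving $\delta(\kappa\wedge\xi^{\flat})=\nabla^{M}_{\xi}\kappa-\nabla^{M}_{\kappa^{\#}}\xi^{\flat}$. The central simplification is the pointwise identity
$$\nabla^{M}_{\xi}\kappa-\nabla^{M}_{\kappa^{\#}}\xi^{\flat}=-|\kappa|^{2}\,\xi^{\flat},$$
which I would verify by evaluating both sides on $\xi$ and on each $e_{i}$. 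On $\xi$ one uses $\kappa(\xi)=0$ and $\nabla^{M}_{\xi}\xi=\kappa^{\#}$ to obtain $-|\kappa|^{2}$ on each side; on $e_{i}$ one uses $d\kappa=0$ to replace $(\nabla^{M}_{\xi}\kappa)(e_{i})$ by $(\nabla^{M}_{e_{i}}\kappa)(\xi)=-g(\nabla^{M}_{e_{i}}\xi,\kappa^{\#})$, which cancels $g(\nabla^{M}_{\kappa^{\#}}\xi,e_{i})$ precisely because the symmetric part of $\nabla^{M}\xi|_{Q}$ vanishes. This gives $\delta\varphi_{0}=\Delta\xi^{\flat}-|\kappa|^{2}\xi^{\flat}$.

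Finally I would convert $\delta$ into $\delta_{b}$. Applying the formula of \cite[Proposition 2.4]{ParkRichardson1996_BasicLaplacian} used in Lemma \ref{basicHarmonicOneFormsAreHarmonicLemma} to the basic $2$-form $\varphi_{0}$, with $\kappa$ basic and $\dim\mathcal{F}=1$, gives $\delta_{b}\varphi_{0}=\delta\varphi_{0}-\varphi_{0}\lrcorner(\xi^{\flat}\wedge\varphi_{0})$. A short index computation (the same contraction that makes $\varphi_{0}\lrcorner(\xi^{\flat}\wedge\cdot)$ vanish on basic one-forms) shows $\varphi_{0}\lrcorner(\xi^{\flat}\wedge\varphi_{0})=|\varphi_{0}|^{2}\xi^{\flat}$; combining with the previous step yields $\delta_{b}\varphi_{0}=\Delta\xi^{\flat}-|\kappa|^{2}\xi^{\flat}-|\varphi_{0}|^{2}\xi^{\flat}$, which is the claim. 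I expect the main obstacle to be the two contraction computations, and in particular invoking the bundle-like (Killing-type) condition $\mathcal{L}_{\xi}g_{Q}=0$ at exactly the point where the cancellation $\nabla^{M}_{\xi}\kappa-\nabla^{M}_{\kappa^{\#}}\xi^{\flat}=-|\kappa|^{2}\xi^{\flat}$ requires it.
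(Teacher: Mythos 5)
Your proposal is correct and follows essentially the same route as the paper: both convert $\delta_b$ to $\delta$ via the Park--Richardson formula (picking up the $-|\varphi_0|^2\xi^\flat$ term from $\varphi_0\lrcorner(\xi^\flat\wedge\varphi_0)$), use Rummler's formula together with $\delta\xi^\flat=0$ to reduce everything to $\Delta\xi^\flat+\delta(\kappa\wedge\xi^\flat)$, and establish the same key identity $\delta(\kappa\wedge\xi^\flat)=-|\kappa|^2\xi^\flat$. The only cosmetic difference is that you verify this last identity componentwise using $d\kappa=0$ and the bundle-like condition, whereas the paper gets it by writing $\nabla^M_\xi\kappa-\nabla^M_{\kappa^\#}\xi^\flat=[\xi,\kappa^\#]^\flat$ and using that $\kappa^\#$ is a foliated vector field, so $[\xi,\kappa^\#]$ is vertical; these are equivalent micro-arguments.
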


\begin{proof}
Since $\varphi _{0}$ is a basic form and $\kappa $ is basic, we know by \cite%
[Proposition 2.4]{ParkRichardson1996_BasicLaplacian}, that 
\begin{equation*}
\delta _{b}\varphi _{0}=\delta \varphi _{0}-\varphi _{0}\lrcorner \left( \xi
^{\flat }\wedge \varphi _{0}\right) =\delta d\xi ^{\flat }+\delta (\kappa
\wedge \xi ^{^{\flat }})-\left\vert \varphi _{0}\right\vert ^{2}\xi ^{\flat
}.
\end{equation*}%
As the flow is Riemannian, we have that $\delta \xi ^{\flat }=0$ and, thus, $%
\Delta \xi ^{\flat }=\delta d\xi ^{\flat }$. Now, an easy computation shows
that 
\begin{equation*}
\delta (\kappa \wedge \xi ^{\flat })=(\delta \kappa )\xi ^{\flat }+\nabla^M
_{\xi }\kappa -\nabla^M_{\kappa ^{\#}}\xi ^{\flat }=[\xi ,\kappa
^{\#}]^{^{\flat }}=-|\kappa |^{2}\xi ^{^{\flat }},
\end{equation*}
where we have used that $[\xi ,\kappa ^{\#}]=g(\nabla^M _{\xi }\kappa
^{\#}-\nabla^M_{\kappa ^{\#}}\xi ,\xi )\xi =-|\kappa|^2\xi $, since $%
\kappa ^{\#}$ is basic and that $\delta \kappa =\delta
_{b}\kappa =0$, as $\kappa $ is harmonic. Therefore, $\delta _{b}\varphi
_{0}=\Delta \xi ^{^{\flat }}-(|\kappa |^{2}+|\varphi _{0}|^{2})\xi ^{^{\flat
}}$.
\end{proof}

 One can easily see from Lemma \ref{EulerFormBasicRiemFlow} and Proposition \ref{CodifferentialOfEulerFormProposition}, that for a minimal Riemannian flow, the   $2$-form $\varphi_0$ is a basic harmonic form   if and only if $\xi$ is an eigenvector for the Laplacian $\Delta$ on $M$ associated with the eigenvalue $|\varphi_0|^2$. 
In the following, we denote by $\phi $ the skew-symmetric endomorphism on $%
\Gamma (TM)$ defined by $g(\phi (\cdot ),\cdot )=-g(\nabla^M _{\cdot }\xi
,\cdot )$. A\ simple calculation shows that $\varphi _{0}\left( \cdot ,\cdot
\right) =-2g(\phi (\cdot ),\cdot )$.

\begin{proposition}
\label{EulerFormBasicHarmonicForRiemannianFlows}(in \cite[proof of
Proposition 6.2 for minimal flows]{ElChamiHabib2022BochnerFormRiemFlows})
Let $(M,\mathcal{F},g,\xi )$ be a 1-ORFM, such that the mean curvature is
basic. If $\mathrm{Ric}\left( \xi \right) =\lambda \xi $ for some function $%
\lambda $, then $\delta _{b}\varphi _{0}=-\kappa \lrcorner \varphi _{0}$.
\end{proposition}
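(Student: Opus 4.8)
The plan is to proceed in parallel with the proof of Proposition \ref{CodifferentialOfEulerFormProposition}, reducing $\delta_b\varphi_0$ to a computation on $M$, but replacing the hypothesis that $\kappa$ is harmonic by the Ricci condition, which will enter through the Weitzenb\"ost{}ch formula. Since $\kappa$ is basic, Lemma \ref{EulerFormBasicRiemFlow} guarantees that $\varphi_0$ is a basic $2$-form, so by \cite[Proposition 2.4]{ParkRichardson1996_BasicLaplacian} we have $\delta_b\varphi_0=\delta\varphi_0-\varphi_0\lrcorner(\xi^{\flat}\wedge\varphi_0)=\delta\varphi_0-|\varphi_0|^2\xi^{\flat}$. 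Using Rummler's formula in the form $\varphi_0=d\xi^{\flat}+\kappa\wedge\xi^{\flat}$ together with $\delta\xi^{\flat}=0$, this becomes $\delta_b\varphi_0=\Delta\xi^{\flat}+\delta(\kappa\wedge\xi^{\flat})-|\varphi_0|^2\xi^{\flat}$, where $\Delta\xi^{\flat}=\delta d\xi^{\flat}$. Exactly as in Proposition \ref{CodifferentialOfEulerFormProposition}, a direct computation of the codifferential of a product of one-forms gives $\delta(\kappa\wedge\xi^{\flat})=(\delta\kappa)\xi^{\flat}+\nabla^M_{\xi}\kappa-\nabla^M_{\kappa^{\#}}\xi^{\flat}$; the difference with that proof is that here I keep the term $\delta\kappa$, which no longer vanishes. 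Everything is thus reduced to understanding $\Delta\xi^{\flat}$.

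At this point I would invoke the Weitzenb\"ost{}ch formula $\Delta\xi^{\flat}=(\nabla^{M})^{\ast}\nabla^{M}\xi^{\flat}+\mathrm{Ric}(\xi^{\flat})$ and use the hypothesis $\mathrm{Ric}(\xi)=\lambda\xi$ to replace $\mathrm{Ric}(\xi^{\flat})$ by $\lambda\xi^{\flat}$. Because $\delta_b\varphi_0$ is a basic one-form, and hence annihilates $\xi$, the identity to be proved splits into two parts according to the orthogonal decomposition $TM=\mathbb{R}\xi\oplus Q$. For the $\xi^{\flat}$-component I would first record a pointwise scalar identity: pairing $\Delta\xi^{\flat}$ with the unit form $\xi^{\flat}$ and using $|\nabla^M\xi|^2=|\kappa|^2+\tfrac12|\varphi_0|^2$ yields the general relation $\mathrm{Ric}(\xi,\xi)=\tfrac12|\varphi_0|^2-\delta\kappa$. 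Together with $\iota_{\xi}\delta\varphi_0=|\varphi_0|^2$, this is precisely what makes the $\xi^{\flat}$-component of $\Delta\xi^{\flat}+\delta(\kappa\wedge\xi^{\flat})-|\varphi_0|^2\xi^{\flat}$ cancel, consistent with $-\kappa\lrcorner\varphi_0$ being a basic form.

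The heart of the argument is the $Q$-component, i.e. computing $(\nabla^{M})^{\ast}\nabla^{M}\xi$ projected onto $Q$. Writing $A=\nabla^M\xi$ and using the Ricci (commutation) identity $(\nabla^M_XA)Y-(\nabla^M_YA)X=R^M(X,Y)\xi$, the trace defining $(\nabla^{M})^{\ast}\nabla^{M}\xi$ produces a curvature contraction which is exactly $\mathrm{Ric}(\xi,e_k)$ on a local orthonormal frame $\{e_k\}$ of $Q$; the Ricci hypothesis makes this term vanish, and this is where the eigenvector assumption is used in an essential way. Assembling the surviving first-order terms with $\delta(\kappa\wedge\xi^{\flat})$, and using that $\kappa^{\#}$ is a foliated vector field, so that $[\xi,\kappa^{\#}]\in\Gamma(T\mathcal{F})$, equivalently the $Q$-part of $\nabla^M_{\xi}\kappa^{\#}$ equals $\nabla^M_{\kappa^{\#}}\xi$ exactly as in the proof of Proposition \ref{CodifferentialOfEulerFormProposition}, the $Q$-component collapses to $-2\nabla^M_{\kappa^{\#}}\xi^{\flat}=-\kappa\lrcorner\varphi_0$, which is the claim. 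As a consistency check, when $\kappa=0$ the statement reduces to $\delta_b\varphi_0=0$, recovering the minimal-flow computation of \cite{ElChamiHabib2022BochnerFormRiemFlows}.

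The step I expect to be most delicate is the explicit evaluation of the rough Laplacian $(\nabla^{M})^{\ast}\nabla^{M}\xi$ and the careful bookkeeping of the $\mathbb{R}\xi$ versus $Q$ components. Since $\nabla^M_{\xi}\xi=\kappa^{\#}\neq0$, one cannot use a frame that is simultaneously geodesic at a point and adapted to $\xi$, so the traces must be handled tensorially, keeping track of the terms $g(\nabla^M_{e_i}e_i,\xi)$ and of the skew-symmetry of $A$ on $Q$ coming from the bundle-like condition $\mathcal{L}_{\xi}g_Q=0$. Getting these terms right is what ultimately isolates the single curvature contraction $\mathrm{Ric}(\xi,e_k)$, whose vanishing under the hypothesis produces the stated formula.
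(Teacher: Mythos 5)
Your proposal is correct, but it follows a genuinely different route from the paper's. The paper never invokes the Weitzenb\"ock formula: it fixes a point, takes a foliated orthonormal frame $\{e_i\}$ of $\Gamma(Q)$ parallel at that point for the transverse Levi-Civita connection, expands $\mathrm{Ric}(\xi,Y)=\sum_{i=1}^q R(e_i,Y,\xi,e_i)$ for $Y\in\Gamma(Q)$ in terms of $\phi=-\nabla^M\xi$, and recognizes the resulting sum as $\tfrac12(\delta_b\varphi_0)(Y)+\tfrac12\varphi_0(\kappa^{\#},Y)$, so the eigenvector hypothesis enters only by setting the left-hand side to zero. Your route instead runs through global identities on $M$: the reduction $\delta_b\varphi_0=\Delta\xi^\flat+\delta(\kappa\wedge\xi^\flat)-|\varphi_0|^2\xi^\flat$ borrowed from Proposition \ref{CodifferentialOfEulerFormProposition}, then Weitzenb\"ock plus a computation of the rough Laplacian. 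I checked that your key claims hold: writing $\nabla^M\xi$ as the sum of its skew part $\tfrac12 d\xi^\flat$ and its symmetric part $\tfrac12(\xi^\flat\otimes\kappa+\kappa\otimes\xi^\flat)$, and using $\delta\xi^\flat=0$, one obtains $(\nabla^{M})^{\ast}\nabla^{M}\xi^\flat=\mathrm{Ric}(\xi,\cdot)+(\delta\kappa)\xi^\flat-\nabla^M_\xi\kappa-\nabla^M_{\kappa^{\#}}\xi^\flat$; evaluating on $\Gamma(Q)$, the $\pm\nabla^M_\xi\kappa$ terms cancel between this and $\delta(\kappa\wedge\xi^\flat)$, leaving $\delta_b\varphi_0(Y)=2\,\mathrm{Ric}(\xi,Y)-2g(\nabla^M_{\kappa^{\#}}\xi,Y)$ for $Y\in\Gamma(Q)$, which under the hypothesis is $-(\kappa\lrcorner\varphi_0)(Y)$ as you claim; and your scalar relation $\mathrm{Ric}(\xi,\xi)=\tfrac12|\varphi_0|^2-\delta\kappa$ is exactly the paper's subsequent proposition ($\lambda=-\delta_b\kappa+|\phi|^2$, with $\delta_b\kappa=\delta\kappa$ and $|\phi|^2=\tfrac12|\varphi_0|^2$). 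What each approach buys: the paper's argument is shorter and stays entirely inside the transverse calculus; yours is heavier on bookkeeping (the rough-Laplacian trace is the one step you only sketch, and as you anticipate it requires the symmetric/skew splitting of $\nabla^M\xi$ rather than a naive geodesic frame, since $\nabla^M_\xi\xi=\kappa^{\#}\neq 0$), but it unifies this proposition with Proposition \ref{CodifferentialOfEulerFormProposition} under one computation and yields, with no eigenvector assumption, the general identity asserting that $\delta_b\varphi_0+\kappa\lrcorner\varphi_0$ equals twice the restriction of $\mathrm{Ric}(\xi,\cdot)$ to $\Gamma(Q)$, together with the value of $\lambda$, as byproducts.
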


\begin{proof}
Fix a point of $M$   and choose a foliated orthonormal
frame $\{e_{i}\}_{i=1,\ldots,q}$ of $\Gamma (Q)$ such that $\nabla e_{i}=0$ at the
point in question.    Recall here that $\nabla$ is the transversal Levi-Civita connection.  For any $Y\in \Gamma (Q)$ parallel at the same point, we compute 

\begin{eqnarray*}
0 &=&\mathrm{Ric}\left( \xi ,Y\right) =\sum_{i=1}^{q}R\left( e_{i},Y,\xi
,e_{i}\right) \\
&=&\sum_{i=1}^{q}g\left( -\nabla^M _{e_{i}}\phi Y+\nabla^M _{Y}\phi e_{i}-\nabla^M
_{\left[ e_{i},Y\right] }\xi ,e_{i}\right) \\
&=&\sum_{i=1}^{q}-g\left( \nabla^M _{e_{i}}\phi Y,e_{i}\right) -g\left( \phi
e_{i},\nabla^M _{Y}e_{i}\right) -g([e_{i},Y],\xi )g\left( \kappa
^{\#},e_{i}\right) ,
\end{eqnarray*}%
  where we use   the fact that $\left[ e_{i},Y\right] $ is leafwise since both $e_{i}$
and $Y$ are parallel at the point in question. Then,   projecting $\nabla^M$ to $Q$, we get that  

\begin{eqnarray*}
0 &=&-\sum_{i=1}^{q}g(\left( \nabla _{e_{i}}\phi \right) \left( Y\right)
,e_{i})-\sum_{i=1}^{q}g(\phi e_{i},\nabla
_{Y}e_{i})-2\sum_{i=1}^{q}g\left( \phi (e_{i}),Y\right) g\left( \kappa
^{\#},e_{i}\right) \\
&=&\frac{1}{%
2}(\delta _{b}\varphi _{0})(Y)+\frac{1}{2}\varphi _{0}(\kappa ^{\#},Y).
\end{eqnarray*}%
Therefore, we deduce that $\delta _{b}\varphi _{0}=-\kappa \lrcorner \varphi
_{0}$.
\end{proof}

Combining Lemma \ref{EulerFormBasicRiemFlow} with Proposition \ref%
{EulerFormBasicHarmonicForRiemannianFlows}, we deduce that for Riemannian
flows with basic mean curvature  if $\mathrm{Ric}\left( \xi \right)
=\lambda \xi $, the Euler form $\varphi _{0}$ is a basic $(-\kappa )$%
-harmonic form. Thus $\varphi _{0}$ is an element of $H_{-\kappa }^{2}(M,%
\mathcal{F})$. Notice here that one can compute the function $\lambda $ explicitly.

\begin{proposition}
Let $(M,\mathcal{F},g,\xi )$ be a 1-ORFM, such that the mean curvature $%
\kappa $ is basic. If $\mathrm{Ric}\left( \xi \right) =\lambda \xi $ for
some function $\lambda $, then $\lambda =-\delta _{b}\kappa +\left\vert \phi
\right\vert ^{2}$.
\end{proposition}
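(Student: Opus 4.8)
The plan is to reduce the identity to the pointwise Bochner identity for the Ricci curvature of $(M,g)$ in the direction of $\xi$, and then to evaluate the resulting terms using the flow structure. Since $\xi$ has unit length and $\mathrm{Ric}(\xi)=\lambda\xi$, we have $\lambda=g(\mathrm{Ric}(\xi),\xi)=\mathrm{Ric}(\xi,\xi)$, so everything reduces to computing this scalar. First I would introduce the $(1,1)$-tensor $B=\nabla^{M}\xi$, given by $BY=\nabla^{M}_{Y}\xi$, and record its structure: on the one hand $B\xi=\nabla^{M}_{\xi}\xi=\kappa^{\#}$, while $g(Be_i,\xi)=\tfrac12 e_i(|\xi|^{2})=0$ shows that $B$ maps $Q$ into $Q$; on the other hand, comparing with the definition $g(\phi(\cdot),\cdot)=-g(\nabla^{M}_{\cdot}\xi,\cdot)$, the restriction $B|_{Q}$ equals $-\phi|_{Q}$, which is skew-symmetric. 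In particular $\mathrm{tr}\,B=\mathrm{div}\,\xi=-\delta\xi^{\flat}=0$, the divergence-free property of a Riemannian flow already used in the proof of Proposition \ref{CodifferentialOfEulerFormProposition}.

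Next I would invoke the standard vector-field Bochner identity
\[
\mathrm{Ric}(\xi,\xi)=\mathrm{div}(\nabla^{M}_{\xi}\xi)-\xi(\mathrm{div}\,\xi)-\mathrm{tr}\big((\nabla^{M}\xi)^{2}\big),
\]
which follows by contracting $\mathrm{Ric}(\xi,\xi)=\sum_i R(e_i,\xi,\xi,e_i)$ against a local orthonormal frame geodesic at the chosen point and expanding $R(e_i,\xi)\xi=\nabla^{M}_{e_i}\nabla^{M}_{\xi}\xi-\nabla^{M}_{\xi}\nabla^{M}_{e_i}\xi-\nabla^{M}_{[e_i,\xi]}\xi$. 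Since $\mathrm{div}\,\xi=0$, the middle term drops out, leaving only the divergence of the mean curvature vector and the trace of $B^{2}$.

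For the first surviving term, $\mathrm{div}(\nabla^{M}_{\xi}\xi)=\mathrm{div}(\kappa^{\#})=-\delta\kappa$; because $\kappa$ is basic with $\kappa=\kappa_{b}$, the Park--Richardson comparison formula \cite[Proposition 2.4]{ParkRichardson1996_BasicLaplacian} gives $\delta_{b}\kappa=\delta\kappa+(-1)^{\dim\mathcal{F}}\varphi_{0}\lrcorner(\xi^{\flat}\wedge\kappa)$, and the correction vanishes for exactly the reason noted in Lemma \ref{basicHarmonicOneFormsAreHarmonicLemma} (the $2$-form $\varphi_{0}$ lies in $\Lambda^{2}Q^{\ast}$, while $\xi^{\flat}\wedge\kappa$ carries the factor $\xi^{\flat}$), so that $\mathrm{div}(\nabla^{M}_{\xi}\xi)=-\delta_{b}\kappa$. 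For the second term I would use the block form of $B$ in the adapted frame $\{\xi,e_{1},\dots,e_{q}\}$: it has a vanishing $\xi$-column, the entries $\kappa(e_{i})$ in the $\xi$-row, and the skew block $-\phi|_{Q}$ on $Q$. Squaring, the only contribution to the trace comes from $(\phi|_{Q})^{2}$, and skew-symmetry of $\phi|_{Q}$ gives $\mathrm{tr}(B^{2})=\mathrm{tr}((\phi|_{Q})^{2})=-|\phi|^{2}$. Substituting both values into the Bochner identity yields $\lambda=\mathrm{Ric}(\xi,\xi)=-\delta_{b}\kappa-0-(-|\phi|^{2})=-\delta_{b}\kappa+|\phi|^{2}$, as claimed.

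The step I expect to be the main obstacle is the bookkeeping that replaces $-\delta\kappa$ by $-\delta_{b}\kappa$: this is precisely where the hypothesis that $\kappa$ is basic enters, through the vanishing of the $\varphi_{0}$-correction relating the ordinary and basic codifferentials. A secondary delicate point is the sign and index accounting in $\mathrm{tr}(B^{2})$, where one must verify that the off-diagonal entries $\kappa(e_i)$ of $B$ contribute nothing to the trace of the square and that the skew-symmetry of $\phi|_{Q}$ produces $\mathrm{tr}(B^{2})=-|\phi|^{2}$, hence the final $+|\phi|^{2}$.
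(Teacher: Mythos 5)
Your proof is correct, and it organizes the computation differently from the paper. The paper proves the identity by brute-force expansion of $\mathrm{Ric}(\xi,\xi)=\sum_{i=1}^{q}R(e_i,\xi,\xi,e_i)$ in a foliated orthonormal frame of $Q$ chosen parallel for the transverse connection at a point: it handles the three terms $\nabla^M_{e_i}\kappa^{\#}$, $\nabla^M_{\xi}\nabla^M_{e_i}\xi$ and $\nabla^M_{[e_i,\xi]}\xi$ one by one (using that $[e_i,\xi]$ is leafwise for foliated $e_i$), and it lands on $-\delta_b\kappa$ directly via the transverse formula $\delta_b=-\sum_i e_i\lrcorner\nabla_{e_i}+\kappa_b\lrcorner$, so the $-|\kappa|^2$ produced by the bracket term is absorbed into $\delta_b\kappa$ without ever mentioning the full codifferential $\delta$. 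You instead quote the vector-field Bochner identity over the full frame, which isolates the three global quantities $\mathrm{div}(\nabla^M_\xi\xi)$, $\xi(\mathrm{div}\,\xi)$, $\mathrm{tr}((\nabla^M\xi)^2)$, and you evaluate them from the block structure of $B=\nabla^M\xi$ (namely $B\xi=\kappa^{\#}$, $B(Q)\subseteq Q$, $B|_Q=-\phi|_Q$ skew): this gives $\mathrm{div}\,\xi=0$, $\mathrm{tr}(B^2)=-|\phi|^2$, and $\mathrm{div}\,\kappa^{\#}=-\delta\kappa$. Your route then needs one step the paper's does not: the comparison $\delta\kappa=\delta_b\kappa$, which you correctly extract from the Park--Richardson formula exactly as in Lemma \ref{basicHarmonicOneFormsAreHarmonicLemma}, using that $\kappa=\kappa_b$ kills the $(\kappa_b-\kappa)\lrcorner$ term and that the $\varphi_0$-correction vanishes on one-forms. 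What your approach buys is modularity: the curvature input is a standard citable identity, the flow geometry enters only through linear algebra on $B$, and the exact place where basicness of $\kappa$ is used is cleanly isolated in the $\delta$ versus $\delta_b$ comparison. What the paper's approach buys is that it never leaves the transverse calculus, so no comparison between the two codifferentials is required. One cosmetic remark: with the usual convention that the $\xi$-column of $B$ is the expansion of $B\xi=\kappa^{\#}$, the entries $\kappa(e_i)$ sit in that column and it is the $\xi$-row that vanishes (since $g(BX,\xi)=0$ for all $X$), i.e.\ the transpose of what you wrote; either way $B^2$ is block-triangular with the same diagonal blocks, so your conclusion $\mathrm{tr}(B^2)=\mathrm{tr}\bigl((\phi|_Q)^2\bigr)=-|\phi|^2$ is unaffected.
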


\begin{proof}
We fix a point of $M$ and choose a foliated orthonormal frame $\{e_{i}\}_{i=1,\ldots,q}$ of 
$\Gamma (Q)$ such that  $\nabla e_{i}=0$ at that point.  Then 

\begin{eqnarray*}
\lambda =\mathrm{Ric}\left( \xi ,\xi \right) &=&\sum_{i=1}^{q}R\left(
e_{i},\xi ,\xi ,e_{i}\right) \\
&=&\sum_{i=1}^{q} g\left( \nabla^M_{e_{i}}\nabla^M_{\xi }\xi -\nabla^M_{\xi }\nabla^M
_{e_{i}}\xi -\nabla^M_{\left[ e_{i},\xi \right] }\xi ,e_{i}\right) \\
&=&\sum_{i=1}^{q} g\left( \nabla^M_{e_{i}}\kappa ^{\#},e_{i}\right) +g\left( \nabla^M
_{\xi }\phi e_{i},e_{i}\right) -g\left( \left[ e_{i},\xi \right] ,\xi
\right) g\left( \kappa ^{\#},e_{i}\right) \\
&=&\sum_{i=1}^{q} g\left( \nabla _{e_{i}}\kappa ^{\#},e_{i}\right)-\sum_{i=1}^{q} g\left( \phi e_{i},\nabla^M
_{\xi }e_{i}\right) -\left\vert \kappa \right\vert ^{2} \\
&=&-\delta _{b}\kappa +\left\vert \phi \right\vert ^{2}.
\end{eqnarray*}%
 Here, we used that $\left[ e_{i},\xi \right] $ is leafwise since $e_{i}$ is
foliated. Also, we used that  $\nabla^M_{\xi }e_{i}=-\phi (e_{i})+\left[
e_{i},\xi \right] $. 
\end{proof}

In particular, if $\kappa $ is basic harmonic, then $\lambda =\left\vert
\phi \right\vert ^{2}$. We also notice that the condition $\mathrm{Ric}%
\left( \xi \right) =\lambda \xi $ is not very restrictive since, for
example, it is automatically satisfied for $K$-contact flows on a $\left(
2m+1\right) $-dimensional manifold, with $\lambda =2m$ (see \cite[Theorem 7.1%
]{Blair2010RiemGeomContactSymplMflds}).

\begin{lemma}
\label{lem:flowseuler} (In \cite[Proposition 6.2]%
{ElChamiHabib2022BochnerFormRiemFlows} for minimal flows) Let $(M,\mathcal{F}%
,g,\xi )$ be a 1-ORFM, such that the mean curvature $\kappa $ is basic. If
the Euler class $[\varphi _{0}]\in H_{-\kappa }^{2}(M,\mathcal{F})$ is
nontrivial, then $H_{-\kappa }^{1}\left( M,\mathcal{F}\right) \simeq
H_{-\kappa }^{1}\left( M\right) $, and $1\leq \mathrm{dim}H_{-\kappa
}^{2}\left( M,\mathcal{F}\right) \leq 1+\mathrm{dim}H_{-\kappa }^{2}\left(
M\right)$. When $\mathrm{dim}H_{-\kappa }^{2}\left( M,\mathcal{F}\right)
=1$, we get that $H_{-\kappa }^{2}\left( M\right) \hookrightarrow
H^{1}\left( M,\mathcal{F}\right) $.
\end{lemma}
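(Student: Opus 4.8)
The plan is to deduce all three statements from a single twisted Gysin long exact sequence obtained from a short exact sequence of complexes built out of the global one-form $\xi^{\flat}$. Since $\xi$ is a unit field, set
\begin{equation*}
\widetilde{\Omega}^{r}:=\Omega^{r}(M,\mathcal{F})\oplus \xi^{\flat}\wedge \Omega^{r-1}(M,\mathcal{F}),
\end{equation*}
and first check that this is a subcomplex of the twisted de Rham complex $(\Omega^{\ast}(M),d_{-\kappa})$, where $d_{-\kappa}=d+\kappa\wedge$. Using Rummler's formula $d\xi^{\flat}=-\kappa\wedge \xi^{\flat}+\varphi_{0}$ together with Lemma \ref{EulerFormBasicRiemFlow} (so that $\varphi_{0}$ is basic and $d\varphi_{0}=-\kappa\wedge \varphi_{0}$), a direct computation gives, for basic $\alpha,\beta$,
\begin{equation*}
d_{-\kappa}(\alpha+\xi^{\flat}\wedge \beta)=\big(d_{-\kappa}\alpha+\varphi_{0}\wedge \beta\big)-\xi^{\flat}\wedge d\beta,
\end{equation*}
which stays in $\widetilde{\Omega}$ and squares to zero, the cross term cancelling precisely because $d\varphi_{0}+\kappa\wedge \varphi_{0}=0$. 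This exhibits the twisted basic complex $(\Omega^{\ast}(M,\mathcal{F}),d_{-\kappa})$ as the subcomplex $\{\beta=0\}$ and the \emph{ordinary} basic complex $(\Omega^{\ast-1}(M,\mathcal{F}),d)$ as the quotient via $\xi\lrcorner$, yielding the short exact sequence
\begin{equation*}
0\longrightarrow (\Omega^{\ast}(M,\mathcal{F}),d_{-\kappa})\longrightarrow (\widetilde{\Omega}^{\ast},d_{-\kappa})\overset{\xi\lrcorner}{\longrightarrow}(\Omega^{\ast-1}(M,\mathcal{F}),d)\longrightarrow 0,
\end{equation*}
whose connecting homomorphism is $[\beta]\mapsto [\varphi_{0}\wedge \beta]$, i.e. wedging with the Euler class.

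To convert the middle term into $H^{\ast}_{-\kappa}(M)$, I would need the inclusion $\widetilde{\Omega}\hookrightarrow (\Omega^{\ast}(M),d_{-\kappa})$ to be a quasi-isomorphism, and this is the main obstacle. In the minimal case $\kappa=0$ this is exactly the content behind \cite[Proposition 6.2]{ElChamiHabib2022BochnerFormRiemFlows}: the averaging operator attached to the leaf closures (Molino structure) projects $\Omega^{\ast}(M)$ onto $\widetilde{\Omega}$, commutes with $d$, and is a homotopy inverse to the inclusion. To adapt this to $d_{-\kappa}$ I would use that $\kappa$ is basic, hence invariant under the averaging, so the projection also commutes with $\kappa\wedge$ and therefore with $d_{-\kappa}$; this gives $H^{\ast}(\widetilde{\Omega})\cong H^{\ast}_{-\kappa}(M)$ and turns the short exact sequence into the long exact sequence
\begin{equation*}
\cdots \to H^{r-1}(M,\mathcal{F})\overset{\wedge[\varphi_{0}]}{\longrightarrow} H^{r+1}_{-\kappa}(M,\mathcal{F})\overset{i_{\ast}}{\longrightarrow} H^{r+1}_{-\kappa}(M)\overset{\pi_{\ast}}{\longrightarrow} H^{r}(M,\mathcal{F})\to \cdots
\end{equation*}

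Finally I would run a short diagram chase using $[\varphi_{0}]\neq 0$. Since $M$ is connected, $H^{0}(M,\mathcal{F})=\mathbb{R}$ and $H^{-1}(M,\mathcal{F})=0$, and because $[\varphi_{0}]\neq 0$ the map $\wedge[\varphi_{0}]\colon H^{0}(M,\mathcal{F})=\mathbb{R}\to H^{2}_{-\kappa}(M,\mathcal{F})$ is injective. Around degree one, the preceding term $H^{-1}(M,\mathcal{F})$ vanishes (so $i_{\ast}$ is injective) and this injectivity forces $\pi_{\ast}=0$ (so $i_{\ast}$ is surjective), giving the isomorphism $H^{1}_{-\kappa}(M,\mathcal{F})\cong H^{1}_{-\kappa}(M)$. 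Exactness at $H^{2}_{-\kappa}(M,\mathcal{F})$ gives $\ker i_{\ast}=\mathrm{im}(\wedge[\varphi_{0}])$, which is one-dimensional, whence $1\leq \dim H^{2}_{-\kappa}(M,\mathcal{F})=1+\dim \mathrm{im}(i_{\ast})\leq 1+\dim H^{2}_{-\kappa}(M)$. And if $\dim H^{2}_{-\kappa}(M,\mathcal{F})=1$ then $\mathrm{im}(i_{\ast})=0$, so by exactness at $H^{2}_{-\kappa}(M)$ the map $\pi_{\ast}\colon H^{2}_{-\kappa}(M)\to H^{1}(M,\mathcal{F})$ is injective, which is the last claim. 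I expect the algebra here to be routine once the quasi-isomorphism in the middle is secured, so the real work is isolating and justifying the twisted averaging step.
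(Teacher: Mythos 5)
Your algebraic skeleton is sound, and it is in fact the same mechanism that underlies the paper's proof: the short exact sequence of complexes
\begin{equation*}
0\longrightarrow(\Omega^{\ast}(M,\mathcal{F}),d_{-\kappa})\longrightarrow(\widetilde{\Omega}^{\ast},d_{-\kappa})\longrightarrow(\Omega^{\ast-1}(M,\mathcal{F}),\pm d)\longrightarrow 0
\end{equation*}
with connecting homomorphism $\wedge[\varphi_{0}]$, followed by your diagram chase, which coincides step for step with the chase in the paper. The difference is that the paper does not attempt to construct this sequence; it simply quotes the twisted Gysin sequence for Riemannian flows from Royo Prieto's thesis (Proposition 2.30 there) and then performs the chase.

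The genuine gap is exactly the step you flag as "the main obstacle," and your proposed repair does not work: the claim that $\widetilde{\Omega}\hookrightarrow(\Omega^{\ast}(M),d_{-\kappa})$ is a quasi-isomorphism is not a routine twisted adaptation of an averaging argument — it is the entire content of the cited Gysin-sequence proposition. Your sketch (average over leaf closures, then note $\kappa$ is invariant) fails precisely in the case this lemma is designed to reach beyond the minimal-flow reference. When the flow is nontaut (e.g.\ the Carri\`ere flow, which is where the paper later applies this lemma), the Molino--Sergiescu theorem says the flow is \emph{not} isometric, since for Riemannian flows on closed manifolds isometric and taut are equivalent; hence there is no compact group of isometries containing the flow over which to average, which is what makes the minimal case (minimal $\Rightarrow$ taut $\Rightarrow$ isometric) fundamentally easier. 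Averaging "attached to the leaf closures" raises several unestablished points: the leaf-closure foliation is in general singular, so smoothness of the averaged form across strata is an issue; a leafwise average or the $L^{2}$-orthogonal projection onto basic-type forms does not commute with $d$ in general (this failure is exactly why \'Alvarez L\'opez's theorem that $\kappa_{b}$ is closed is a theorem); and even given a chain-map projection, the homotopy-inverse property must still be proved. So the difficulty is not making a projection commute with $\kappa\wedge$, as your sketch suggests, but the existence of the projection and homotopy at all. To close the argument you should either cite the twisted Gysin sequence for general Riemannian flows, as the paper does, or reproduce a proof that handles non-isometric flows; the averaging route only recovers the already-known isometric/minimal case.
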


\begin{proof}
By the Gysin sequence \cite[Proposition 2.30]{RoyoPrieto2004Thesis}, we have%
\begin{equation*}
0\rightarrow H_{-\kappa }^{1}\left( M,\mathcal{F}\right) \rightarrow
H_{-\kappa }^{1}\left( M\right) \rightarrow H^{0}\left( M,\mathcal{F}\right) 
\overset{\wedge \left[ \varphi _{0}\right] }{\rightarrow }H_{-\kappa
}^{2}\left( M,\mathcal{F}\right) \rightarrow H_{-\kappa }^{2}\left( M\right)
\rightarrow H^{1}\left( M,\mathcal{F}\right) .
\end{equation*}

We have $H^{0}\left( M,\mathcal{F}\right) \cong \mathbb{R}$, generated by
constants, and the map $\wedge \left[ \varphi _{0}\right] $ is injective and
thus with one-dimensional range. Then $H_{-\kappa }^{1}\left( M,\mathcal{F}%
\right) \simeq H_{-\kappa }^{1}\left( M\right) $, and the inequality follows
immediately. When $\mathrm{dim}H_{-\kappa }^{2}\left( M,\mathcal{F}\right)
=1$, the last map in the sequence is clearly injective.
\end{proof}

\begin{remark}
One way to ensure that the Euler class is nonzero is to impose the condition 
$\mathrm{Ric}(\xi )=\lambda \xi $ with $\lambda >0$.
\end{remark}

\begin{lemma}
\label{lem:flowseulercohomologybasic} Let $(M,\mathcal{F},g,\xi )$ be a
1-ORFM, such that the mean curvature $\kappa $ is basic.

\begin{enumerate}
\item If $H^{r}\left( M,\mathcal{F}\right) \cong 0$ for some integer $r$,
then 
\begin{equation*}
H_{-\kappa }^{r+2}\left( M,\mathcal{F}\right) \hookrightarrow H_{-\kappa
}^{r+2}\left( M\right) ,
\end{equation*}%
and $\mathrm{dim}H_{-\kappa }^{r+1}\left( M,\mathcal{F}\right) \geq 
\mathrm{dim}H_{-\kappa }^{r+1}\left( M\right) $.  We also have that $$%
\mathrm{dim}H_{-\kappa }^{r+2}\left( M\right) \leq \mathrm{dim}H_{-\kappa
}^{r+2}\left( M,\mathcal{F}\right)+\mathrm{dim}H^{r+1}\left( M,\mathcal{F}%
\right).$$

\item If $H_{-\kappa }^{ r+1}\left( M\right) \cong 0$ and $H_{-\kappa
}^{r+2}\left( M\right) \cong 0$ for some integer $r$, then 
\begin{equation*}
H^{ r}\left( M,\mathcal{F}\right) \simeq H_{-\kappa }^{ r+2}\left( M,\mathcal{F%
}\right).
\end{equation*}
\end{enumerate}
\end{lemma}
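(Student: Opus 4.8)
The plan is to deduce both parts from the Gysin sequence for Riemannian flows already used in the proof of Lemma~\ref{lem:flowseuler} (from \cite[Proposition 2.30]{RoyoPrieto2004Thesis}). Under the standing hypothesis that $\kappa$ is basic, Lemma~\ref{EulerFormBasicRiemFlow} shows that $\varphi_0$ is a basic $2$-form satisfying $d\varphi_0=-\kappa\wedge\varphi_0$, so $[\varphi_0]\in H_{-\kappa}^2(M,\mathcal{F})$ and the sequence applies. In its general degree form it reads
\[
\cdots \to H_{-\kappa}^{n}(M,\mathcal{F}) \xrightarrow{\iota} H_{-\kappa}^{n}(M) \xrightarrow{\pi} H^{n-1}(M,\mathcal{F}) \xrightarrow{\wedge[\varphi_0]} H_{-\kappa}^{n+1}(M,\mathcal{F}) \xrightarrow{\iota} H_{-\kappa}^{n+1}(M) \to \cdots,
\]
where $\iota$ is the natural map, $\pi$ is integration over the fiber, and the connecting map is wedging with $[\varphi_0]$; setting $n=1$ recovers the five-term sequence printed in Lemma~\ref{lem:flowseuler}. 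Each assertion is then a diagram chase on an appropriate window of this sequence, combined with rank-nullity.

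For part (1) I would use $H^{r}(M,\mathcal{F})=0$ inside the window
\[
H_{-\kappa}^{r+1}(M,\mathcal{F}) \xrightarrow{\iota} H_{-\kappa}^{r+1}(M) \xrightarrow{\pi} H^{r}(M,\mathcal{F}) \xrightarrow{\wedge[\varphi_0]} H_{-\kappa}^{r+2}(M,\mathcal{F}) \xrightarrow{\iota} H_{-\kappa}^{r+2}(M) \xrightarrow{\pi} H^{r+1}(M,\mathcal{F}).
\]
Exactness at $H_{-\kappa}^{r+2}(M,\mathcal{F})$ forces $\ker\iota=\operatorname{im}(\wedge[\varphi_0])=0$, which is the claimed injection $H_{-\kappa}^{r+2}(M,\mathcal{F})\hookrightarrow H_{-\kappa}^{r+2}(M)$. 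Exactness at $H_{-\kappa}^{r+1}(M)$ gives $\operatorname{im}\iota=\ker\pi=H_{-\kappa}^{r+1}(M)$, so $\iota$ is onto there and $\dim H_{-\kappa}^{r+1}(M,\mathcal{F})\geq\dim H_{-\kappa}^{r+1}(M)$. Finally, applying rank-nullity to $\pi\colon H_{-\kappa}^{r+2}(M)\to H^{r+1}(M,\mathcal{F})$ together with $\ker\pi=\operatorname{im}\iota$ and the injectivity of $\iota$ yields $\dim H_{-\kappa}^{r+2}(M)=\dim H_{-\kappa}^{r+2}(M,\mathcal{F})+\dim\operatorname{im}\pi\leq\dim H_{-\kappa}^{r+2}(M,\mathcal{F})+\dim H^{r+1}(M,\mathcal{F})$.

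For part (2) I would shorten to the window
\[
H_{-\kappa}^{r+1}(M) \xrightarrow{\pi} H^{r}(M,\mathcal{F}) \xrightarrow{\wedge[\varphi_0]} H_{-\kappa}^{r+2}(M,\mathcal{F}) \xrightarrow{\iota} H_{-\kappa}^{r+2}(M),
\]
where now both $H_{-\kappa}^{r+1}(M)$ and $H_{-\kappa}^{r+2}(M)$ vanish. Exactness at $H^{r}(M,\mathcal{F})$ gives $\ker(\wedge[\varphi_0])=\operatorname{im}\pi=0$, so $\wedge[\varphi_0]$ is injective; exactness at $H_{-\kappa}^{r+2}(M,\mathcal{F})$ gives $\operatorname{im}(\wedge[\varphi_0])=\ker\iota=H_{-\kappa}^{r+2}(M,\mathcal{F})$, so $\wedge[\varphi_0]$ is surjective. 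Hence $\wedge[\varphi_0]$ is the asserted isomorphism $H^{r}(M,\mathcal{F})\simeq H_{-\kappa}^{r+2}(M,\mathcal{F})$.

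I do not expect a genuine obstacle: once the Gysin sequence is in place the argument is purely formal. The only point demanding care is bookkeeping of degrees, namely fixing the one-step degree drop of $\pi$ (integration over the one-dimensional leaves) and the shift from untwisted to twisted cohomology produced by $\wedge[\varphi_0]$, so that the windows above align with the sequence. I would verify these conventions against the $n=1$ instance displayed in Lemma~\ref{lem:flowseuler} before committing to the final write-up.
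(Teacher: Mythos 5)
Your proposal is correct and follows essentially the same route as the paper: the paper's proof invokes the identical Gysin sequence from \cite[Proposition 2.30]{RoyoPrieto2004Thesis}, uses the same window around $H^{r}(M,\mathcal{F})=0$ to get injectivity of $H_{-\kappa}^{r+2}(M,\mathcal{F})\to H_{-\kappa}^{r+2}(M)$, surjectivity onto $H_{-\kappa}^{r+1}(M)$, and the dimension bound, and deduces part (2) from the same sequence exactly as you do. Your write-up simply makes the exactness and rank-nullity steps more explicit than the paper's terser version.
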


\begin{proof}
(1) By the Gysin sequence in \cite[Proposition 2.30]{RoyoPrieto2004Thesis}, 
\begin{equation*}
H_{-\kappa }^{r+1}\left( M,\mathcal{F}\right) \rightarrow H_{-\kappa
}^{r+1}\left( M\right) \rightarrow H^{r}\left( M,\mathcal{F}\right) \overset{%
\wedge \left[ \varphi _{0}\right] }{\rightarrow }H_{-\kappa }^{r+2}\left( M,%
\mathcal{F}\right) \hookrightarrow H_{-\kappa }^{r+2}\left( M\right)
\rightarrow H^{r+1}\left( M,\mathcal{F}\right)
\end{equation*}%
We have that the map $\wedge \left[ \varphi _{0}\right] $ above is the zero
map. Thus the following map is injective. The statement (2) follows from the
same sequence.
\end{proof}

\begin{theorem}
\label{prop:minimalflowformal} Let $(M,\mathcal{F},g,\xi )$ be a 1-ORFM,
such that the flow is minimal. Suppose that $\mathrm{Ric}\left( \xi \right)
=\lambda \xi $ for some function $\lambda>0$. Assume that the flow is
transversely formal and that $H^{r}(M,\mathcal{F})=0$ for some $r$. Then
any basic harmonic $(r+2)$-form is harmonic. If moreover ${\rm dim} H^1(M,\mathcal{F})\geq 1$, then we have 
$${\rm dim} H^{r+1}(M)\leq {\rm dim} H^{r+1}(M,\mathcal{F})\leq  {\rm dim} H^{r+2}(M).$$
\end{theorem}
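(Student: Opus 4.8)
The plan is to reduce everything to the minimal situation, record that the Euler form $\varphi_0$ is basic harmonic, and then treat the two assertions separately; the interesting input is transverse formality together with the vanishing $H^{r}(M,\mathcal F)=0$. First I would observe that minimality gives $\kappa=0$, hence $\kappa_b=0$, so that ``basic harmonic'' and ``basic $\kappa_b$-harmonic'' coincide, $\Delta_b=\Delta_{\kappa_b}$, the transverse star $\overline{\ast}$ preserves basic harmonicity, and transverse formality reduces to the statement that the wedge and interior products of two basic harmonic forms are again basic harmonic. Combining Lemma \ref{EulerFormBasicRiemFlow} and Proposition \ref{EulerFormBasicHarmonicForRiemannianFlows} with $\kappa=0$ gives $d\varphi_0=0$ and $\delta_b\varphi_0=0$, so $\varphi_0$ is basic harmonic; since $\lambda=|\phi|^2$ in the minimal case, the hypothesis $\lambda>0$ forces $\varphi_0\neq 0$, i.e.\ a nontrivial Euler class.

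For the first assertion, let $\gamma$ be a basic harmonic $(r+2)$-form. As the basic forms are a subcomplex, $d\gamma=0$ as a form on $M$, so it remains to check $\delta\gamma=0$. Using the expression of $\delta_b$ in terms of $\delta$ from \cite[Proposition 2.4]{ParkRichardson1996_BasicLaplacian} (exactly as in the proof of Lemma \ref{basicHarmonicOneFormsAreHarmonicLemma}), with $\dim\mathcal F=1$, $\chi_{\mathcal F}=\xi^{\flat}$ and $\kappa_b=\kappa=0$, one gets $\delta\gamma=\delta_b\gamma+\varphi_0\lrcorner(\xi^{\flat}\wedge\gamma)$. A short computation pushing $\xi^{\flat}$ outside the purely transverse contraction gives $\varphi_0\lrcorner(\xi^{\flat}\wedge\gamma)=\xi^{\flat}\wedge(\varphi_0\lrcorner\gamma)$, so $\delta\gamma=\xi^{\flat}\wedge(\varphi_0\lrcorner\gamma)$. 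Now $\varphi_0\lrcorner\gamma$ is the interior product of the two basic harmonic forms $\varphi_0$ and $\gamma$, hence basic harmonic by transverse formality, and it has degree $r$; since $H^{r}(M,\mathcal F)=0$, the basic Hodge theorem forces $\varphi_0\lrcorner\gamma=0$. Therefore $\delta\gamma=0$ and $\gamma$ is harmonic.

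For the second assertion, the left inequality is immediate from Lemma \ref{lem:flowseulercohomologybasic}(1) with $\kappa=0$, which yields $\dim H^{r+1}(M,\mathcal F)\ge\dim H^{r+1}(M)$. The right inequality is the main point, and here I would use $\dim H^{1}(M,\mathcal F)\ge 1$ to choose a nonzero basic harmonic $1$-form $\alpha$, which by Lemma \ref{constantInnerProdLemma} has constant nonzero length, normalized to $|\alpha|\equiv 1$. Consider the map $T\colon\beta\mapsto\alpha\wedge\beta$ on basic harmonic $(r+1)$-forms; by transverse formality $T$ takes values in basic harmonic $(r+2)$-forms. The key observation is that for every basic harmonic $(r+1)$-form $\beta$ the contraction $\alpha\lrcorner\beta$ is a basic harmonic $r$-form, hence $0$ because $H^{r}(M,\mathcal F)=0$; then the pointwise identity $\alpha\lrcorner(\alpha\wedge\beta)+\alpha\wedge(\alpha\lrcorner\beta)=|\alpha|^{2}\beta$ shows that $\alpha\wedge\beta=0$ forces $\beta=0$. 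Thus $T$ is injective, giving $\dim H^{r+1}(M,\mathcal F)\le\dim H^{r+2}(M,\mathcal F)$, and combining this with the injection $H^{r+2}(M,\mathcal F)\hookrightarrow H^{r+2}(M)$ of Lemma \ref{lem:flowseulercohomologybasic}(1) produces $\dim H^{r+1}(M,\mathcal F)\le\dim H^{r+2}(M)$.

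The main obstacle is precisely this right-hand inequality. A direct Gysin-sequence bookkeeping only expresses $\dim H^{r+1}(M,\mathcal F)$ through the rank of the map $\wedge[\varphi_0]$ on $H^{r+1}(M,\mathcal F)$, which is not obviously controllable, so I would avoid the Euler class altogether and instead build the injection $H^{r+1}(M,\mathcal F)\hookrightarrow H^{r+2}(M,\mathcal F)$ by hand from a harmonic $1$-form. The essential and slightly delicate ingredient is that transverse formality keeps $\alpha\lrcorner\beta$ basic harmonic while the hypothesis $H^{r}(M,\mathcal F)=0$ annihilates it, which is exactly what makes wedging with $\alpha$ injective.
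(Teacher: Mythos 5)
Your proof is correct and follows essentially the paper's own argument: part 1 (using minimality and $\mathrm{Ric}(\xi)=\lambda\xi$ to get $\varphi_0$ basic harmonic, then killing $\varphi_0\lrcorner\gamma$ via transverse formality and $H^{r}(M,\mathcal{F})=0$, and comparing $\delta$ with $\delta_b$) is identical, as is the key mechanism for the right-hand inequality, namely that wedging with a basic harmonic one-form is injective because the contraction lands in $H^{r}(M,\mathcal{F})=0$. The only cosmetic difference is in the final composition: the paper maps $H^{r+1}(M,\mathcal{F})$ directly into $H^{r+2}(M)$, using part 1 to see that the wedged form is harmonic on $M$, whereas you factor the injection through $H^{r+2}(M,\mathcal{F})$ and then invoke the Gysin injection $H^{r+2}(M,\mathcal{F})\hookrightarrow H^{r+2}(M)$ of Lemma \ref{lem:flowseulercohomologybasic}(1) a second time.
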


\begin{proof}
Let $\alpha $ be a basic harmonic $(r+2)$-form. Since the flow is minimal,
the Euler form $\varphi _{0}$ is a basic harmonic $2$-form, by  Proposition  \ref%
{EulerFormBasicHarmonicForRiemannianFlows}. By the transverse formality
assumption, the form $\varphi _{0}\lrcorner \alpha $ is a basic harmonic $r$%
-form and is zero, since $H^{r}(M,\mathcal{F})=0$. Now, 
\begin{equation*}
\delta \alpha =\delta _{b}\alpha +\varphi _{0}\lrcorner (\xi \wedge \alpha
)=\delta _{b}\alpha =0.
\end{equation*}%
Hence $\alpha $ is harmonic.  To prove the second part, we consider the map $H^{r+1}(M,\mathcal{F})\to H^{r+2}(M); \alpha\mapsto\theta^i\wedge\alpha$ where $\{\theta^i\}$ is a basis of $H^1(M,\mathcal{F})$. This map is clearly injective, since $\theta^i\lrcorner\alpha$ is an element in $H^{r}(M,\mathcal{F})=0$. Hence ${\rm dim} H^{r+1}(M,\mathcal{F})\leq  {\rm dim} H^{r+2}(M)$. The other part of the inequality comes from the first part of Lemma \ref{lem:flowseulercohomologybasic}. 
\end{proof}

\subsection{Sasakian manifolds}

We now consider Sasakian manifolds, which are special cases of isometric
flows with very particular transverse geometric structure, namely a K\"{a}%
hler metric.

We will use the following setup. Let $(M^{2n+1},\mathcal{F},g,\xi )$ be a
1-ORFM that is a minimal flow, and we let 
\begin{eqnarray*}
\eta &=&\xi ^{^{\flat }} \\
d\eta &=&\varphi _{0}.
\end{eqnarray*}%
The endomorphism $\phi :TM\rightarrow TM$ and metric $g$ are chosen so that

\begin{eqnarray*}
\varphi _{0}\left( X,Y\right) &=&2g\left( X,\phi Y\right) \\
\phi ^{2} &=&-I+\eta \otimes \xi \\
g\left( \phi X,\phi Y\right) &=&g\left( X,Y\right) -\eta \left( X\right)
\eta \left( Y\right)
\end{eqnarray*}%
for $X,Y\in \Gamma \left( TM\right) $.   The normality condition for Sasakian
manifolds is%
\begin{equation*}
\left( \nabla^M_{X}\phi \right) \left( Y\right) =g\left( X,Y\right) \xi -\eta
\left( Y\right) X.
\end{equation*}%
Note that this condition implies our formula for $\phi $ in the last
section: 
\begin{equation*}
\phi \left( \bullet \right) =-\nabla^M_{\bullet }\xi .
\end{equation*}
 
\begin{remark}
We are using notation consistent with that used in, for example, \cite%
{Fujitani1966ComplexValuedDiffFormsSasaki} and \cite%
{Cappelletti-MontanoEtAl2015SasakianNilmflds}. Other researchers use metrics
that differ from ours by a constant, such as \cite%
{Blair2010RiemGeomContactSymplMflds}.
\end{remark}

\begin{theorem}
\label{SasakiFormalImpliesTransFormalTheorem} Let $\left( M,g\right) $ be a
closed Sasakian manifold of dimension $2n+1$ with Reeb vector field $\xi $.
Let $\mathcal{F}$ be the corresponding Riemannian flow defined by $\xi $. If 
$\left( M,g\right) $ is geometrically formal, then $\left( M,\mathcal{F}%
,g\right) $ is transversely formal.
\end{theorem}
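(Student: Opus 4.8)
The plan is to reduce transverse formality to ordinary geometric formality by exploiting the transverse K\"ahler (Lefschetz) structure of the Reeb flow. First I would record the standard features of the Sasakian setting: since $\phi(\bullet)=-\nabla^M_\bullet\xi$ and $\phi\xi=0$, we have $\nabla^M_\xi\xi=0$, so the flow $\mathcal{F}$ is minimal and $\kappa=\kappa_b=0$. Consequently the notions of basic $\kappa_b$-harmonic and basic harmonic coincide, and transverse formality is exactly the statement that the wedge of any two basic harmonic forms is basic harmonic. The transverse geometry is K\"ahler with transverse K\"ahler form $\omega=\tfrac12\varphi_0$, so $\varphi_0=d\eta$ is itself a basic harmonic $2$-form (consistent with Proposition \ref{EulerFormBasicHarmonicForRiemannianFlows}, using $\mathrm{Ric}(\xi)=2n\,\xi$), and the transverse Lefschetz operator $L=\varphi_0\wedge$ with its adjoint $\Lambda$ satisfies the transverse K\"ahler identities and commutes with $\Delta_b$.

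The key bridge is a comparison of $\delta$ and $\delta_b$ on basic forms. For the minimal flow, \cite[Proposition 2.4]{ParkRichardson1996_BasicLaplacian} gives, for a basic form $\gamma$,
\[
\delta\gamma=\delta_b\gamma+\eta\wedge(\varphi_0\lrcorner\gamma),
\]
where $\delta_b\gamma$ is basic while $\eta\wedge(\varphi_0\lrcorner\gamma)$ is its orthogonal $\eta$-component. Comparing the two components yields two facts I would isolate as a lemma: (i) a \emph{primitive} basic harmonic form (one with $\varphi_0\lrcorner\gamma=0$) is harmonic on $M$; and (ii) a basic form that is harmonic on $M$ is automatically basic harmonic, since $\delta\gamma=0$ forces both orthogonal components to vanish, giving $\delta_b\gamma=0$.

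With these in hand the argument runs as follows. Given basic harmonic forms $\alpha,\beta$, I would use the transverse Lefschetz decomposition to write $\alpha=\sum_k\varphi_0^{\,k}\wedge a_k$ and $\beta=\sum_l\varphi_0^{\,l}\wedge b_l$ with $a_k,b_l$ primitive basic harmonic, hence harmonic on $M$ by (i). Since $(M,g)$ is geometrically formal, each $a_k\wedge b_l$ is harmonic on $M$; being basic, it is basic harmonic by (ii). As $\varphi_0$ is of even degree it commutes through the wedge, so
\[
\alpha\wedge\beta=\sum_{k,l}\varphi_0^{\,k+l}\wedge(a_k\wedge b_l)=\sum_{k,l}L^{k+l}(a_k\wedge b_l),
\]
and because $L$ preserves basic harmonicity, each summand, and therefore $\alpha\wedge\beta$, is basic harmonic. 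This establishes transverse formality.

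The main obstacle I anticipate is the careful invocation of the transverse K\"ahler package: one must justify that the transverse Lefschetz operators commute with $\Delta_b$ and that every basic harmonic form admits a primitive (Lefschetz) decomposition into basic harmonic pieces. These are the transverse analogues of the K\"ahler identities and should be citable for Sasakian foliations, but the signs in the $\delta$--$\delta_b$ comparison and the interior-product identity $\varphi_0\lrcorner(\eta\wedge\gamma)=\eta\wedge(\varphi_0\lrcorner\gamma)$ for basic $\gamma$ must be verified, and one must confirm that the splitting of $\delta\gamma$ into a basic part and an $\eta$-part is genuinely orthogonal so that (i) and (ii) follow cleanly.
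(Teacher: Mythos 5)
Your proof is correct, but it takes a genuinely different route from the paper's. The paper argues cohomologically: it first invokes \cite[Theorem 1.2]{GrosjeanNagy2009OnTheCohomologyAlgGeomFormal} to conclude that a geometrically formal closed Sasakian manifold is a rational homology sphere, then uses the nontriviality of the Euler class (via the Ricci condition of Proposition \ref{EulerFormBasicHarmonicForRiemannianFlows}) and the Gysin sequence (Lemmas \ref{lem:flowseuler} and \ref{lem:flowseulercohomologybasic}) to show that $H^{r}(M,\mathcal{F})\cong 0$ for $r$ odd and $H^{r}(M,\mathcal{F})\cong \mathbb{R}$ for $r$ even; transverse formality is then automatic because the basic harmonic forms are exactly the constant multiples of powers of $\varphi_{0}$, i.e., the basic cohomology algebra is that of $\mathbb{C}P^{n}$. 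You instead transfer formality directly from $(M,g)$ to the transverse level with no cohomology computation: your facts (i) and (ii) are precisely the $\delta$-versus-$\delta_{b}$ comparison from \cite[Proposition 2.4]{ParkRichardson1996_BasicLaplacian} that the paper itself exploits in the proof of the companion Theorem \ref{SasakiFormalIFFTransFormalTheorem} (there combined with Tachibana's theorem rather than with a Lefschetz decomposition), and the splitting you worry about is indeed orthogonal, since $\delta_{b}\gamma$ is basic while the remaining term carries an $\eta$ factor, so both (i) and (ii) hold as stated. The remaining inputs --- the basic K\"ahler identity $[L,\Delta_{b}]=0$ and the decomposition of basic harmonic forms into $L$-powers of primitive basic harmonic pieces --- are valid for taut transversally K\"ahler foliations (hence for Sasakian Reeb flows) by El Kacimi's transverse Hodge theory, so the obstacle you flag is only a matter of citation. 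The trade-off: the paper's route yields the much stronger structural conclusion about $H^{\ast}(M,\mathcal{F})$, at the cost of the deep rational-homology-sphere theorem of Grosjean--Nagy; your route avoids that theorem entirely and uses geometric formality only through products of primitive pieces, at the cost of the (classical but nontrivial) transverse Hodge--Lefschetz package.
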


\begin{proof}
Suppose $\left( M,g\right) $ is geometrically formal. By \cite[Theorem 1.2]%
{GrosjeanNagy2009OnTheCohomologyAlgGeomFormal}, $M$ is a rational homology
sphere, so that $H^{r}\left( M\right) \cong 0$ for $1\leq r<n$. By \cite[%
Theorem 7.1]{Blair2010RiemGeomContactSymplMflds}, $K$-contact flows satisfy
the Ricci curvature condition in Lemma \ref%
{EulerFormBasicHarmonicForRiemannianFlows}, and that includes Sasakian
manifolds. Then, by Lemma \ref{lem:flowseuler}, $H^{1}(M,\mathcal{F})\simeq
0 $, and $H^{2}(M,\mathcal{F})\simeq \mathbb{R}$.  Using Lemma \ref%
{lem:flowseulercohomologybasic}, we deduce that $H^{r}(M,\mathcal{F})\simeq
0 $ for $r$ odd and that $H^{r}(M,\mathcal{F})\simeq \mathbb{R}$ for $r$
even. Then $M$ is transversely formal.
\end{proof}

\begin{remark}
\label{HeisenbergRemark}The converse of the statement in Theorem \ref%
{SasakiFormalImpliesTransFormalTheorem} is in general false. From \cite[%
Theorem 1.1]{Cappelletti-MontanoEtAl2015SasakianNilmflds}, any quotient of a
Heisenberg group $H\left( 1,2n\right) $ by a cocompact discrete subgroup $%
\Gamma $ admits a canonical Sasakian structure. In fact, these are the only compact
nilmanifolds that admit Sasakian structures and   are metric fibrations over a flat
torus. Hence, the first cohomology group $H^{1}\left( M\right)$ is isomorphic to $H^1(\mathbb{T}^{2n})\simeq \mathbb{R}^{2n}$. Thus, by \cite[Theorem 1.2]%
{GrosjeanNagy2009OnTheCohomologyAlgGeomFormal}, none of these manifolds are
geometrically formal but they are transversely formal, since the basic harmonic forms are the pullbacks of the harmonic forms on that flat
torus.  
\end{remark}

In the following theorem, we show that transverse formality of closed
Sasakian manifolds implies some formality properties of wedge products of
harmonic forms of specific degrees.

\begin{theorem} \label{SasakiFormalIFFTransFormalTheorem} 
Let $\left( M,g\right) $ be a
closed Sasakian manifold of dimension $2n+1$ with Reeb vector field $\xi $.
Let $\mathcal{F}$ be the corresponding Riemannian flow defined by $\xi $. If 
$\left( M,\mathcal{F},g\right) $ is transversely formal, then
\begin{enumerate}
\item The wedge product of two harmonic forms respectively of degrees less
than $n$ and greater than $n+1$ is harmonic.

\item Assume that  $H^{r}(M,\mathcal{F})=0$ for some integer $r$.  The wedge
product of two harmonic forms respectively of degrees $p$ and $q$ with $%
p+q=r+2$ and $p,q\leq n$ is harmonic.

\item Assume that   $H^{r}(M,\mathcal{F})=0$ for some integer $r$.  The wedge
product of two harmonic forms respectively of degrees $p$ and $q$ with $%
p+q=4n-r$ and $p,q\geq n+1$ is harmonic.
\end{enumerate}
\end{theorem}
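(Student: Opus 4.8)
The plan is to push everything down to the transverse (basic) Hodge theory. Since the flow is minimal we have $\kappa=\kappa_b=0$, so the basic and $\kappa_b$-twisted complexes coincide and the Euler form $\varphi_0=d\eta$ is itself basic harmonic (Lemma \ref{EulerFormBasicRiemFlow} together with Proposition \ref{EulerFormBasicHarmonicForRiemannianFlows}). In this situation transverse formality is equivalent both to \emph{the wedge of two basic harmonic forms is basic harmonic} and to \emph{the interior product of two basic harmonic forms is basic harmonic}. I will use repeatedly the two identities, valid for $\sigma,\mu$ basic on a minimal flow,
\begin{equation*}
\delta\sigma=\delta_b\sigma+\eta\wedge(\varphi_0\lrcorner\sigma),\qquad \delta(\eta\wedge\mu)=\pm\,\eta\wedge\delta_b\mu ,
\end{equation*}
the first being the specialization of \cite[Proposition 2.4]{ParkRichardson1996_BasicLaplacian} already used in the proof of Theorem \ref{prop:minimalflowformal}, the second following from $\delta=\pm\ast d\ast$ on $M$ together with $\ast(\eta\wedge\mu)=\pm\,\bar\ast\mu$ for basic $\mu$. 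The other essential input is the classical structure of harmonic forms on a closed Sasakian manifold (Fujitani \cite{Fujitani1966ComplexValuedDiffFormsSasaki}): a harmonic $k$-form with $k\le n$ is basic and transversally primitive ($\iota_\xi(\cdot)=0$, $\varphi_0\lrcorner(\cdot)=0$), while by Hodge duality a harmonic $k$-form with $k\ge n+1$ has the form $\eta\wedge\sigma$ with $\sigma$ basic.

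Granting this, statement (2) is the core. As $p,q\le n$, the forms $\alpha,\beta$ are basic harmonic, so by transverse formality (wedge version, $\kappa_b=0$) $\alpha\wedge\beta$ is a basic harmonic $(r+2)$-form; in particular $d(\alpha\wedge\beta)=0$ and $\delta_b(\alpha\wedge\beta)=0$. Applying transverse formality again, now in its interior-product version to the two basic harmonic forms $\varphi_0$ and $\alpha\wedge\beta$, the form $\varphi_0\lrcorner(\alpha\wedge\beta)$ is basic harmonic of degree $r$; since $H^r(M,\mathcal{F})=0$ it vanishes. Substituting $\delta_b(\alpha\wedge\beta)=0$ and $\varphi_0\lrcorner(\alpha\wedge\beta)=0$ into the first displayed identity yields $\delta(\alpha\wedge\beta)=0$, and with closedness this shows $\alpha\wedge\beta$ is harmonic.

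For statement (1), write $\alpha$ (degree $p<n$) as a basic harmonic form and, using $q>n+1$, write $\beta=\pm\,\eta\wedge\bar\ast\rho$ where $\rho=\ast\beta$ is basic harmonic of degree $2n+1-q<n$. Then $\alpha\wedge\beta=\pm\,\eta\wedge(\alpha\wedge\bar\ast\rho)$; if $p+q>2n+1$ the basic factor has degree $>2n$ and the product vanishes, while otherwise $\alpha\wedge\bar\ast\rho=\pm\,\bar\ast(\alpha\lrcorner\rho)$ with $\alpha\lrcorner\rho$ basic harmonic by transverse formality, so that $\mu:=\alpha\wedge\bar\ast\rho$ is basic harmonic. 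Since $\alpha\wedge\beta$ is automatically closed, the second displayed identity reduces coclosedness to $\delta_b\mu=0$, which holds; hence $\alpha\wedge\beta$ is harmonic, with no cohomological hypothesis needed. Statement (3) is then immediate: with $p,q\ge n+1$ both $\alpha$ and $\beta$ are of the form $\eta\wedge(\text{basic})$, so $\alpha\wedge\beta$ contains the factor $\eta\wedge\eta=0$ and vanishes identically; this is the formal Hodge dual of (2), its hypotheses $p+q=4n-r$ and $p,q\ge n+1$ being the images of those of (2) under $k\mapsto 2n+1-k$.

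The step I expect to be the main obstacle is not any single computation but two bookkeeping matters. First, fixing all signs in the Hodge-star/interior-product identities so that the two displayed formulas and the relation $\bar\ast(\alpha\wedge\bar\ast\rho)=\pm\,\alpha\lrcorner\rho$ hold on the nose. Second, and more delicate, justifying the Sasakian structural result in our conventions — in particular that a harmonic form of degree $\le n$ is \emph{genuinely} basic, not merely basic modulo an exact term; the closedness and hard-Lefschetz relations by themselves only force the vertical component $\iota_\xi\omega$ to be exact, and one needs the fact that $\iota_\xi$ preserves harmonicity on a compact Sasakian manifold to conclude that this component vanishes. It is precisely this purity that makes the reduction to transverse formality legitimate throughout.
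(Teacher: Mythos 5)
Your proof is correct, and its skeleton is the same as the paper's: both arguments reduce everything to transverse Hodge theory via the Tachibana--Fujitani structure theorem (harmonic forms of degree at most $n$ are basic and primitive; those of degree at least $n+1$ are $\eta\wedge(\text{basic})$ by duality), the comparison $\delta\sigma=\delta_b\sigma+\eta\wedge\left(\varphi_0\lrcorner\sigma\right)$ on basic forms, and the basic harmonicity of $\varphi_0$; in particular your part (2) is literally the paper's argument, which simply cites Theorem \ref{prop:minimalflowformal} where you inline its proof. The genuine differences are in (1) and (3). For (1), the paper first proves that the interior product $\alpha\lrcorner\left(\ast\beta\right)$ of two harmonic forms of degree at most $n$ is \emph{harmonic} (not merely basic harmonic) and then applies $\ast$; the coclosedness step there tacitly uses $\varphi_0\lrcorner\left(\alpha\lrcorner\ast\beta\right)=0$, which does hold because $\varphi_0\lrcorner$ is a sum of compositions of two contractions and therefore commutes with $\alpha\lrcorner$, so the primitivity of $\ast\beta$ passes to the interior product --- but the paper leaves this unsaid. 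Your route avoids the issue entirely: writing $\alpha\wedge\beta=\pm\,\eta\wedge\mu$ with $\mu=\alpha\wedge\overline{\ast}\rho$ basic harmonic, the identity $\delta\left(\eta\wedge\mu\right)=\pm\,\eta\wedge\delta_b\mu$ gives coclosedness from $\delta_b\mu=0$ alone, at the small cost of the (trivial) case split $p+q>2n+1$; this is the more economical argument, since harmonicity of the wedge really only needs closedness of its Hodge dual, not full harmonicity of the interior product. For (3), the paper appeals to Hodge duality from (2), whereas you observe that two forms of degrees at least $n+1$ on a $(2n+1)$-manifold wedge to zero, so the statement is vacuously true; your observation is cleaner and self-contained (there is no direct identity converting harmonicity of $\ast\alpha\wedge\ast\beta$ into harmonicity of $\alpha\wedge\beta$, so the triviality is in fact what makes the dual statement hold). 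Finally, the ``purity'' concern you flag at the end --- that a harmonic form of degree at most $n$ is genuinely basic and primitive, not merely so up to exact terms --- is exactly what \cite[Theorem 4.1]{Tachibana1967OnDecompC-HarmFormsSasaki} asserts, and the paper handles it by that citation alone, so no additional argument is required there.
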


\begin{proof}
Let $\alpha $ and $\beta $ be harmonic forms of degree $\leq n$. Then \cite[%
Theorem 4.1]{Tachibana1967OnDecompC-HarmFormsSasaki} implies that $\alpha
,\beta $ are basic forms, and%
\begin{equation*}
\varphi _{0}\lrcorner \alpha =0,~\varphi _{0}\lrcorner \beta =0.
\end{equation*}

Then by \cite[Proposition 2.4]{ParkRichardson1996_BasicLaplacian}, 
\begin{equation*}
\delta _{b}\alpha =\delta \alpha -\varphi _{0}\lrcorner \left( \xi ^{\flat
}\wedge \alpha \right) =\delta \alpha =0.
\end{equation*}%
The same is true for $\beta $, so that $\alpha $ and $\beta $ are basic
harmonic. By transverse formality, $\alpha \wedge \beta $ is basic harmonic,
and also $\alpha \lrcorner \beta $ is basic harmonic. Now, we have 
\begin{equation*}
\delta \left( \alpha \lrcorner \beta \right) =\delta _{b}\left( \alpha
\lrcorner \beta \right) +\varphi _{0}\lrcorner \left( \xi ^{\flat }\wedge
\left( \alpha \lrcorner \beta \right) \right) =\delta _{b}\left( \alpha
\lrcorner \beta \right) =0.
\end{equation*}%
Thus, $\alpha \lrcorner \beta $ is harmonic. Next, suppose $\alpha $ is a
harmonic form of degree $\leq n$, and suppose that $\beta $ is a harmonic
form of degree $\geq n+1$. Then $\ast \beta $ has degree $\leq n$, then $%
\alpha \lrcorner \ast \beta $ is harmonic by the previous argument. But $%
\alpha \lrcorner \ast \beta =\pm \ast \left( \alpha \wedge \beta \right) $,
so $\alpha \wedge \beta $ is harmonic. This proves the first part of the
theorem. To prove the second part, for $\alpha $ and $\beta $ of degrees
less than $n$, the form $\alpha \wedge \beta $ is basic harmonic by the
argument above. If the degree of $\alpha \wedge \beta $ is  $r+2$,  then
Proposition \ref{prop:minimalflowformal} yields the second part. The last
statement follows from the second part by using the Hodge star operator.
\end{proof}

\subsection{Classification of three-dimensional Riemannian flows that are
transversely formal}

In this part, we are going to classify three-dimensional Riemannian flows that are transversely formal. For this, consider a 1-ORFM of three-manifold $M$. If the flow is nontaut, then by Carri\`{e}re
classification \cite[Theorem III.A.1]{Carriere1984FlotsRiem}, it is foliated
diffeomorphic to a hyperbolic torus. The standard metric on these manifolds
is transversely formal (see Section \ref%
{EulerFormBasicHarmonicForRiemannianFlows}).  Hence, we are left with taut Riemannian flows. But Theorem \ref{TautCaseKb_zero_Theorem} tells us that for such flows, transverse formality implies that the basic component of the mean curvature  of the flow vanishes. Again by Carri\`ere classification, the manifold $M$ is diffeomorphic to one of the following: 
\begin{itemize} 
\item $\mathbb{T}^3$ and the flow is generated by a linear flow on $\mathbb{T}^3$.
\item $\mathbb{S}^1\times \mathbb{S}^2$ and the flow is generated by an irrational rotation suspension.
\item $L_{p,q}$ (Lens space) and the flow has 2 closed leaves and is generated by the flow on $\mathbb{S}^3$ as described in \cite{HebdaJ1992ExampleRelevantCurvaturePinching}.
\item Seifert fibration.  
\end{itemize}
 Let us treat each case separately. When $M$ is $\mathbb{T}^3$ (resp. $\mathbb{S}^1\times \mathbb{S}^2$), the formality of the metric in $\mathbb{T}^3$ (resp. $\mathbb{S}^1\times \mathbb{S}^2$) gives transverse formality by Proposition \ref{codim2One-formalProp}. When $M$ is the Lens space, the computations of the mean curvature done in  \cite{HebdaJ1992ExampleRelevantCurvaturePinching} shows that such flow is minimal if and only if it is a Hopf fibration which is not possible because of the closedness of the orbits. We are left with the Seifert fibration. For this particular case, we will assume furthermore that $\mathrm{Ric}(\xi )=\lambda \xi $ with $\lambda
=\left\vert \phi\right\vert ^{2}>0$. By Lemma \ref{EulerFormBasicHarmonicForRiemannianFlows}%
, $\varphi _{0}$ is basic harmonic and transverse formality gives that $\varphi _{0}$
has constant norm, so that $\left\vert\varphi_0\right\vert^2=2\left\vert \phi\right\vert ^{2}$ is a positive constant. Recall here $\phi=-\nabla^M\xi$ is the skew-symmetric endomorphism on $%
\Gamma (TM)$. In dimension three and since $\phi(\xi)=0$, we
can write $\phi$ as the transformation associated to the matrix $\left( 
\begin{array}{cc}
0 & -b \\ 
b & 0%
\end{array}%
\right) $ with $b$ constant. If we rescale the Riemannian metric, the manifold $(M,%
\mathcal{F},b^{2}g,\frac{1}{b}\xi )$ is Sasakian, and the matrix of the new $%
\phi$ is $\left( 
\begin{array}{cc}
0 & -1 \\ 
1 & 0%
\end{array}%
\right) $, a complex structure. From \cite[Theorem 8]%
{BelgunOntheMetricStrNonKahlerCpxSurf} and its restatement in \cite[Section 7%
]{BoyerGalickiMatzeu2006etaEinsteinSasakian}, either $M$ is a quotient of a
round 3-sphere, a Heisenberg 3-manifold, or $\widetilde{SL}(2,\mathbb{R}%
)\diagup \Gamma $ with $\Gamma $ a discrete subgroup of the connected
component of the isometry group of $\widetilde{SL}(2,\mathbb{R})$ in the
natural metric. Observe that quotients of the round sphere and Heisenberg
3-manifolds are necessarily transversely formal, but quotients of the type $%
\widetilde{SL}(2,\mathbb{R})\diagup \Gamma $ have $\mathrm{dim} H^{1}(M,%
\mathcal{F})>2$ and thus are not transversely formal.

\begin{theorem}
Suppose that $(M,\mathcal{F},g,\xi )$ is a 1-ORFM on a 3-manifold. Assume that $M$ is transversely formal. Then $M$ is either $\mathbb{T}^3$, $\mathbb{S}^1\times \mathbb{S}^2$, the hyperbolic torus or a Seifert fibration.  Assume moreover that $\mathrm{Ric}(\xi )=\lambda \xi $ with $%
\lambda>0$, then up to a rescaling of the metric and the flow, $M$ is a quotient of $\mathbb{S}^{3}$ or is a Heisenberg 3-manifold. \label{classificationTheorem}
\end{theorem}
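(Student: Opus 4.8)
The plan is to assemble the case-by-case analysis developed in the discussion above into a single statement, separating the taut and nontaut cases. First I would dispose of the nontaut case: by Carri\`ere's classification \cite[Theorem III.A.1]{Carriere1984FlotsRiem}, a nontaut Riemannian flow on a closed three-manifold is foliated diffeomorphic to a flow on a hyperbolic torus, and the standard metric there is transversely formal, yielding the hyperbolic-torus alternative. In the taut case I would apply Theorem \ref{TautCaseKb_zero_Theorem} to force $\kappa_b=0$, and then invoke Carri\`ere's classification a second time to reduce the underlying geometry to one of the four models $\mathbb{T}^3$, $\mathbb{S}^1\times\mathbb{S}^2$, a lens space $L_{p,q}$, or a Seifert fibration.

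The next step is to eliminate the lens space. Using the mean curvature computation of \cite{HebdaJ1992ExampleRelevantCurvaturePinching}, the relevant flow on $L_{p,q}$ is minimal exactly when it is a Hopf fibration; but its two closed orbits preclude this, so $\kappa=0$ is impossible and the lens space cannot occur. Together with the fact that the flat and product metrics on $\mathbb{T}^3$ and $\mathbb{S}^1\times\mathbb{S}^2$ are transversely formal by Theorem \ref{codim2One-formalProp}, this establishes the first assertion: $M$ is $\mathbb{T}^3$, $\mathbb{S}^1\times\mathbb{S}^2$, the hyperbolic torus, or a Seifert fibration.

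For the second assertion I would add the hypothesis $\mathrm{Ric}(\xi)=\lambda\xi$ with $\lambda>0$. By Lemma \ref{EulerFormBasicRiemFlow} and Proposition \ref{EulerFormBasicHarmonicForRiemannianFlows} the Euler form $\varphi_0$ is basic harmonic, and transverse formality forces its pointwise norm to be constant by Lemma \ref{constantInnerProdLemma}, so $|\varphi_0|^2=2|\phi|^2$ is a positive constant. In dimension three the skew-symmetric endomorphism $\phi$ annihilates $\xi$ and is therefore determined on $Q$ by a single nonzero constant $b$; rescaling the metric and the flow by $b$ turns $\phi$ into a transverse complex structure and makes $(M,\mathcal{F},b^2 g,\tfrac1b\xi)$ a Sasakian three-manifold. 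Finally I would apply Belgun's classification of such manifolds \cite{BelgunOntheMetricStrNonKahlerCpxSurf}, as restated in \cite{BoyerGalickiMatzeu2006etaEinsteinSasakian}: $M$ is a quotient of the round $\mathbb{S}^3$, a Heisenberg three-manifold, or a quotient $\widetilde{SL}(2,\mathbb{R})/\Gamma$. The first two are transversely formal, whereas the third has $\dim H^1(M,\mathcal{F})>2$, contradicting the bound $\dim H^1(M,\mathcal{F})\le\binom{q}{1}=2$ from Corollary \ref{basicCohomologyLiimitationCorollary}; this rules out the third case and gives the stated conclusion.

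The main obstacle is less a computation than the correct orchestration of the two external classification results and the two exclusion arguments. The delicate points are verifying that transverse formality genuinely yields $\kappa_b=0$ in the taut case so that Carri\`ere's list applies, confirming the minimality/Hopf-fibration obstruction that removes the lens space, and checking that the rescaling really produces an honest Sasakian structure so that Belgun's theorem is applicable; the final elimination of $\widetilde{SL}(2,\mathbb{R})/\Gamma$ then rests cleanly on the first basic cohomology bound.
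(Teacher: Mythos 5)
Your proposal is correct and follows essentially the same route as the paper's own argument: Carri\`ere's classification split by tautness (with Theorem \ref{TautCaseKb_zero_Theorem} forcing $\kappa_b=0$ in the taut case), exclusion of the lens space via Hebda's mean curvature computation, and, under the Ricci hypothesis, the rescaling to a Sasakian structure followed by Belgun's classification with $\widetilde{SL}(2,\mathbb{R})/\Gamma$ eliminated by the bound $\dim H^1(M,\mathcal{F})\leq 2$. No substantive differences from the paper's proof.
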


\section{Examples of Riemannian foliations with transverse formality}\label{ExamplesSection}

\subsection{Geometric formality and transverse formality} \label%
{twoKindsFormalitySubsection}

The following examples illustrate that transverse formality and geometric
formality need not coexist for metrics on foliations.

\begin{example} 
In this example, the manifold $\left( M,g\right) $ is formal, but the
Riemannian foliation $\left( M,\mathcal{F},g\right) $ is not transversely
formal. In \cite[Theorem 24]{KotschickTerzic2011_formalityBiquotients},
Kotschick and Terzic showed that the example introduced by Totaro in \cite[%
Section 1]{Totaro2003_CurvatureBiquotients}, which is a biquotient of $\mathbb{S}^{3}\times
\mathbb{S}^{3}\times \mathbb{S}^{3}$ with the standard metric, is not geometrically
formal. This gives a Riemannian foliation by the fibers of this submersion, such that the
total space is formal, but the foliation is not transversely formal.
\end{example}

We already mentioned in Remark \ref{HeisenbergRemark} that the central
foliation for some Heisenberg manifolds is transversely formal, but the
metric on the manifold itself is not formal. In fact, from \cite%
{Hasegawa1989MinimalModelsNilmflds}, the only formal nilmanifolds are tori.
We give another simple example here.

\begin{example}
In this example, the Riemannian foliation $\left( M,\mathcal{F},g\right) $
is transversely formal, but the manifold $\left( M,g\right) $ is not formal
for any bundle-like metric. Let $H$ be a closed hyperbolic surface, which
has first Betti number at least $4$. Let  $M=H\times \mathbb{S}^{1}$,  which has first
Betti number at least $5$. Then the codimension one foliation of $M$ with
leaves of the form $H\times \left\{ \theta \right\} $ is clearly Riemannian
and transversely formal for the product metric. However, by \cite[Theorem 6]%
{Kotschick2001OnProductsHarmFms}, the first Betti number of a geometrically
formal $3$-manifold must be $0$, $1$, or $3$; thus the manifold $M$ is not
formal for any metric.
\end{example}

\subsection{The Hopf fibration of  $\mathbb{S}^{3}$ } \label{HopfFibrationExample}

Consider the sphere $M= \mathbb{S}^{3} \left( r\right) \subseteq \mathbb{R}^{4}$ of
radius $r$ with metric induced from the standard Euclidean metric in
coordinates $\left( x,y,z,w\right) $. We consider the one-dimensional
foliation $\mathcal{F}$ determined by the unit tangent $\xi =\frac{1}{r}%
\left( x\partial _{y}-y\partial _{x}+z\partial _{w}-w\partial _{z}\right) $,
so that the characteristic (leafwise volume) form is $\xi ^{\flat }=\frac{1}{%
r}\left( xdy-ydx+zdw-wdz\right) $, by which we really mean $\frac{1}{r}%
i^{\ast }\left( xdy-ydx+zdw-wdz\right) $, where $i: \mathbb{S}^{3} \left( r\right)
\rightarrow \mathbb{R}^{4}$ is the inclusion. That means we are
considering equivalence classes of forms, so that for example $%
x^{2}+y^{2}+z^{2}+w^{2}=r^{2}$ and $xdx+ydy+zdz+wdw=0$. We consider $e^{1}=%
\frac{1}{r}\left( xdz-zdx+wdy-ydw\right) $ and $e^{2}=\frac{1}{r}\left(
ydz-zdy+xdw-wdx\right) $, unit one-forms on $ \mathbb{S}^{3}\left( r\right)$. We
check that $\xi ^{\flat },e^{1},e^{2}$ is an oriented orthonormal basis of $%
T^{\ast }M$ at each point, and we choose the orientation so that $\ast \xi
^{\flat }=e^{1}\wedge e^{2}$. A computation shows that 
\begin{equation*}
\left( d\xi ^{\flat },e^{1}\wedge e^{2}\right) =\frac{2}{r^{3}}\left(
x^{2}+y^{2}+z^{2}+w^{2}\right) =\frac{2}{r}\text{,}
\end{equation*}%
so that 
\begin{equation*}
d\xi ^{\flat }=\varphi _{0}=\frac{2}{r}e^{1}\wedge e^{2}.
\end{equation*}%
From Rummler's formula, we realize that this means that $\mathcal{F}$ is
minimal (i.e. geodesic) and that the normal bundle is not integrable.  
\begin{eqnarray*}
\Delta \xi ^{\flat } &=&\delta d\xi ^{\flat }=\frac{2}{r}\ast d\ast \left(
e^{1}\wedge e^{2}\right) =\frac{2}{r}\ast d\xi ^{\flat } \\
&=&\frac{4}{r^{2}}\ast \left( e^{1}\wedge e^{2}\right) =\frac{4}{r^{2}}\xi
^{\flat },
\end{eqnarray*}%
so that $\xi ^{\flat }$ (and thus $d\xi ^{\flat }=\varphi _{0}=\frac{2}{r}%
e^{1}\wedge e^{2}$) is an eigenvalue of the Laplacian corresponding to
eigenvalue $\frac{4}{r^{2}}$. We note also that as a basic form, $\varphi
_{0}$ is closed, so that by using the transverse star operator and the fact
that $\mathcal{F}$ is minimal, 
\begin{equation*}
\delta _{b}\varphi _{0}=-\overline{\ast }d\overline{\ast }\left( \varphi
_{0}\right) =-\overline{\ast }d\overline{\ast }\left( \frac{2}{r}e^{1}\wedge
e^{2}\right) =-\frac{2}{r}\overline{\ast }d\xi ^{\flat }=0,
\end{equation*}%
so that $\varphi _{0}$ is basic harmonic. We see that $\left\vert \varphi
_{0}\right\vert ^{2}=\frac{4}{r^{2}}$, a constant. In fact, $\frac{r^{2}}{4}%
\varphi _{0}$ is the transverse volume form of $\left( M,\mathcal{F}\right) $%
.

We see that the basic cohomology $H^{k}\left( M,\mathcal{F}\right) $ is $1$
in degree 0 and 2 (generated by $1$, $\varphi _{0}$, respectively), and
since  $M\diagup \mathcal{F}\cong \mathbb{S}^{2}$,  we get that $H^{1}\left( M,\mathcal{%
F}\right) \cong 0$. Thus, we have that the metric on $M$ is both formal and
transversely formal, and our calculations are consistent with Proposition %
\ref{codim2One-formalProp} and with Theorem \ref%
{SasakiFormalImpliesTransFormalTheorem}, as $r=1$ corresponds to the Sasaki
metric.

\subsection{The Carri\`{e}re example} \label{Carriere}

We will compute the various types of harmonic forms of the Carri\`{e}re
example from \cite{Carriere1984FlotsRiem} in the $3$-dimensional case. Let $%
A $ be a matrix in $\mathrm{SL}_{2}(\mathbb{Z})$ of trace strictly greater
than $2$. We denote respectively by $V_{1}$ and $V_{2}$ the eigenvectors
associated with the eigenvalues $\lambda $ and $\frac{1}{\lambda }$ of $A$
with $\lambda >1$ irrational. Let the hyperbolic torus $M=\mathbb{T}_{A}^{3}$
be the quotient of $\mathbb{T}^{2}\times \mathbb{R}$ by the equivalence
relation which identifies $(m,t)$ to $(A(m),t+1)$. We choose the bundle-like
metric (letting $\left( x,s,t\right) $ denote the local coordinates in the $%
V_{2}$ direction, $V_{1}$ direction, and $\mathbb{R}$ direction,
respectively) as 
\begin{equation*}
g=\lambda ^{-2t}dx^{2}+\lambda ^{2t}ds^{2}+dt^{2}.
\end{equation*}%
First, we notice that the mean curvature of the flow is $\kappa =\kappa
_{b}=\log \left( \lambda \right) dt$, since $\chi _{\mathcal{F}}=\lambda
^{-t}dx$ is the characteristic form and $d\chi _{\mathcal{F}}=-\log \left(
\lambda \right) \lambda ^{-t}dt\wedge dx=-\kappa \wedge \chi _{\mathcal{F}}$%
. Since $\varphi _{0}=0$, the normal bundle is involutive, and we can see
this as the resulting foliation is given by the $\left( y,t\right) $
coordinates. The transverse volume form is $\nu =\lambda ^{t}ds\wedge
dt=d\left( \frac{1}{\log \lambda }\lambda ^{t}ds\right) $, and we see that $%
d\nu =0=\left( d-\kappa \wedge \right) \nu $. Since the flow is nontaut,
this is consistent with the fact that $H^{0}\left( M,\mathcal{F}\right)
\cong H_{\kappa }^{2}\left( M,\mathcal{F}\right) \cong \mathbb{R}$. The
cohomology groups of $M$ satisfy $H^{j}\left( M\right) \cong \mathbb{R}$,
and the generating cohomology classes are generated by $1$, $dt$, $dx\wedge
ds$, $dx\wedge ds\wedge dt$; all are clearly closed. We note that the
codifferential $\delta $ satisfies $\delta =\left( -1\right) ^{k}\ast d\ast $
on $k$-forms, so that we can check if the forms are harmonic by evaluating $%
d\ast $. We see that $d\left( \ast dt\right) =d\left( dx\wedge ds\right) =0$%
, $d\left( \ast \left( dx\wedge ds\right) \right) =d\left( dt\right) =0,$ $%
d\left( \ast \left( dx\wedge ds\wedge dt\right) \right) =d\left( 1\right) =0$%
, so the generators mentioned are all harmonic. The generators are also
constant length, and the wedge product of any two harmonic forms is
harmonic, so that $\left( M,g\right) $ is geometrically formal. Next,
observe that the basic cohomology groups satisfy $H^{0}\left( M,\mathcal{F}%
\right) \cong H^{1}\left( M,\mathcal{F}\right) \cong \mathbb{R}$, with
generators $1$ and $dt$, both clearly closed. Since $\delta _{b}$ is a
restriction of $\delta $ when the mean curvature is basic and the normal
bundle is involutive, we see that both $1$ and $dt$ are basic harmonic
forms. Also, $H_{\kappa }^{1}\left( M,\mathcal{F}\right) =H_{\kappa
}^{2}\left( M,\mathcal{F}\right) \cong \mathbb{R}$, and they are generated
by the basic $\kappa _{b}$-harmonic forms $\overline{\ast }dt=\lambda ^{t}ds$
and $\overline{\ast }1=\nu =\lambda ^{t}ds\wedge dt$. We see that wedge
product of basic harmonic and basic $\kappa $-harmonic forms is basic $%
\kappa $-harmonic, because the only nontrivial case is $\left( dt\right)
\wedge \left( \lambda ^{t}ds\right) =\nu $, so $\left( M,\mathcal{F}%
,g\right) $ is both formal and transversely formal. This example
demonstrates Proposition \ref{codim2One-formalProp}.

\subsection{Solvmanifold admitting transversely formal Riemannian foliations}

We consider the example given in \cite{AndresEtAlExamplesOf4dCpctLCKSolvms}
and in \cite{Banyaga2007exampleLC}, and we construct three Riemannian
foliations on this manifold, all of which are transversely formal. We
consider the Lie group $G_{k}$ of matrices 
\begin{equation*}
\left( 
\begin{array}{cccc}
e^{kz} & 0 & 0 & x \\ 
0 & e^{-kz} & 0 & y \\ 
0 & 0 & 1 & z \\ 
0 & 0 & 0 & 1%
\end{array}%
\right) ,
\end{equation*}%
all variables in $\mathbb{R}$, with $k$ fixed, $e^{k}+e^{-k}=2\cosh \left(
k\right) \in \mathbb{Z}\setminus \left\{ 2\right\} $. This forms a connected
solvable Lie group with basis $\left\{ dx-kxdz,~dy+kydz,~dz\right\} $ of
right-invariant forms.

Let $\Gamma _{k}<G_{k}$ be a discrete subgroup such that $N_{k}=G_{k}\diagup
\Gamma _{k}$ is compact. One such example (adapted from an example in \cite%
{Bock2016OnLowDimSolvMflds}) is defined using $\tau =e^{k}=\frac{3+\sqrt{5}}{%
2};$ $2\cosh k=3$, which satisfies $\tau ^{2}-3\tau +1=0$. We let

\begin{equation*}
\Gamma _{k}=\left\{ \left( 
\begin{array}{cccc}
\tau ^{s} & 0 & 0 & n+m\tau \\ 
0 & \tau ^{-s} & 0 & n+m\tau ^{-1} \\ 
0 & 0 & 1 & s \\ 
0 & 0 & 0 & 1%
\end{array}%
\right) :s,n,m\in \mathbb{Z}\right\}
\end{equation*}%
We check that if $A\left( s,n,m\right) $ is the matrix above, $A\left(
s_{1},n_{1},m_{1}\right) A\left( s_{2},n_{2},m_{2}\right) =A\left(
s_{1}+s_{2},a,b\right) $, where $a$ and $b$ are specific integers, found by
rewriting $\tau ^{s}$ as an integer linear combination of $1$ and $\tau $
and verifying that $\tau ^{-s}$ is the same linear combination of $1$ and $%
\tau ^{-1}$. Thus $\Gamma $ is indeed a discrete subgroup, and it can be
checked that $G_{k}\diagup \Gamma _{k}$ is compact.

The basis $\left\{ dx-kxdz,dy+kydz,dz\right\} $ descends to a global basis
of one-forms $\left\{ \alpha ,\beta ,\gamma \right\} $ over $N_{k}$. The
group $H^{1}\left( N_{k}\right) $ is one dimensional with generator $\left[
\gamma \right] $, and $H^{2}\left( N_{k}\right) $ is also one-dimensional
with generator $\left[ \alpha \wedge \beta \right] $.

Let $\lambda \in \mathbb{R}$ be a number such that $\lambda \left[ \alpha
\wedge \beta \right] \in H^{2}\left( N_{k},\mathbb{Z}\right) $. For nonzero $%
n\in \mathbb{Z}$, let $M_{n,k}$ be the total space of a principal $\mathbb{S}^{1}$%
-bundle over $N_{k}$ with Chern class $n\lambda \left[ \alpha \wedge \beta %
\right] $; such a bundle exists by the result in \cite%
{Kobayashi1956PrincipalFibreBundles}. For simplicity, we denote the
pullbacks of forms on $N_{k}$ to $M_{n,k}$ by the same notations.

Let $\eta $ be a connection form with curvature%
\begin{equation*}
d\eta =n\lambda \alpha \wedge \beta .
\end{equation*}%
The set $\left\{ \alpha ,\beta ,\gamma ,\eta \right\} $ forms a global basis
of one-forms on the $4$-dimensional manifold $M_{n,k}$. We consider the
metric which makes this basis orthonormal.

Relations between these forms from \cite{Banyaga2007exampleLC}:%
\begin{eqnarray}
d\alpha &=&-k\alpha \wedge \gamma  \notag \\
d\beta &=&k\beta \wedge \gamma  \notag \\
d\gamma &=&0  \notag \\
d\eta &=&n\lambda \alpha \wedge \beta.  \label{differentialsOnSolvmanifold}
\end{eqnarray}

\vspace{1pt}We see $d\left( \ast \gamma \right) =d\left( \alpha \wedge \beta
\wedge \eta \right) =d\left( \alpha \wedge \beta \right) \wedge \eta $, and $%
d\left( \alpha \wedge \beta \right) =0$, so that $d\left( \ast \gamma
\right) =0$, so $\gamma $ is the harmonic form that generates $H^{1}\left(
M_{n,k}\right) \cong \mathbb{R}$, and thus $\ast \gamma =\alpha \wedge \beta
\wedge \eta $ is the harmonic form that generates $H^{3}\left(
M_{n,k}\right) \cong \mathbb{R}$. We check that $H^{2}\left( M_{n,k}\right)
\cong 0$, because although $\alpha \wedge \beta $ is closed, $d\left( \frac{1%
}{n\lambda }\eta \right) =\alpha \wedge \beta $, and similarly all other
closed two-forms are exact. Thus, we easily check that the wedge product of
any two harmonic forms is harmonic, so that  $M_{n,k}$  is geometrically
formal.

Let $\omega =k\gamma $ and $\omega ^{\prime }=-k\gamma $. The forms $\alpha
,\alpha \wedge \eta ,\alpha \wedge \gamma ,\alpha \wedge \gamma \wedge \eta $
are $\omega $-harmonic, 
\begin{equation*}
d_{\omega }\left( \frac{1}{2k}\beta \right) =\beta \wedge \gamma ,
\end{equation*}%
so the latter is $d_{\omega }$-exact. In fact, these four $\omega $-harmonic
forms generate all of the cohomology groups $H_{\omega }^{\ast }$. We have $%
d_{\omega }\left( \alpha \wedge \eta \right) =0=\delta _{\omega }\left(
\alpha \wedge \eta \right) $, and also $\beta ,\beta \wedge \eta ,\beta
\wedge \gamma ,\beta \wedge \gamma \wedge \eta $ are $\omega ^{\prime }$%
-harmonic and generate $H_{\omega ^{\prime }}^{\ast }$, and 
\begin{equation*}
d_{\omega ^{\prime }}\left( -\frac{1}{2k}\alpha \right) =\alpha \wedge
\gamma ,
\end{equation*}%
so the latter is $d_{\omega ^{\prime }}$-exact.

The dual vectors to the orthonormal basis of one-forms are (in order) $%
A,B,C,H$. Then we have (for example $n\lambda =$ $d\eta \left( A,B\right)
=-\eta \left( \left[ A,B\right] \right) $)

\begin{eqnarray}
\left[ A,C\right] &=&kA  \notag \\
\left[ A,B\right] &=&-n\lambda H  \notag \\
\left[ B,C\right] &=&-kB  \notag \\
\left[ A,H\right] &=&\left[ B,H\right] =\left[ C,H\right] =0.
\label{LieBracketsSolvmanifold}
\end{eqnarray}

We note that a distribution $L$ of $TM_{n,k}$ is the tangent bundle to a
Riemannian foliation with bundle-like metric if and only if $L$ is
involutive and 
\begin{equation}
\left\langle \left[ X,V\right] ,Y\right\rangle +\left\langle X,\left[ Y,V%
\right] \right\rangle +V\left\langle X,Y\right\rangle =0
\label{Riemannian condition}
\end{equation}%
for all $V\in \Gamma L,$ $X,Y\in \Gamma L^{\bot }$ (see \cite[(5.13),
Theorem 5.17]{Tondeur1997GeometryOfFoliations}). The only possible rank one
distribution with these properties is $L=\mathrm{span}\left\{ H\right\} $.
We check that $\left\langle \left[ A,H\right] ,B\right\rangle +\left\langle
A,\left[ B,H\right] \right\rangle =0$, $\left\langle \left[ A,H\right]
,C\right\rangle +\left\langle A,\left[ C,H\right] \right\rangle =0$, $%
\left\langle \left[ B,H\right] ,C\right\rangle +\left\langle B,\left[ C,H%
\right] \right\rangle =0$, etc., producing a Riemannian flow. For this flow, 
$\eta $ is the characteristic form, and $d\eta =n\lambda \alpha \wedge \beta 
$, so the Euler form is $n\lambda \alpha \wedge \beta $, and the mean
curvature is zero, so this is an isometric flow with normal bundle that is
not involutive.

Now we compute $H^{\ast }\left( M_{n,k},\mathcal{F}\right) $. We have $%
H^{0}\left( M_{n,k},\mathcal{F}\right) \cong \mathbb{R}$ (generated by
constants), $H^{3}\left( M_{n,k},\mathcal{F}\right) \cong \mathbb{R}$
(generated by $\alpha \wedge \beta \wedge \gamma $). From (\ref%
{differentialsOnSolvmanifold}), the only closed one-forms mod exact forms
are multiples of $\gamma $, and $\delta _{b}\gamma =0$, so $\gamma $ is
basic harmonic. Thus $H^{1}\left( M_{n,k},\mathcal{F}\right) \cong \mathbb{R}
$. Then we also have that $H^{2}\left( M_{n,k},\mathcal{F}\right) \cong 
\mathbb{R}$, generated by $\overline{\ast }\gamma =\alpha \wedge \beta $,
which must also be basic harmonic. We see that the wedge product of any
basic harmonic form (linear combination of $1$, $\gamma $, $\alpha \wedge
\beta $, $\alpha \wedge \beta \wedge \gamma $) with any other is also basic
harmonic. Thus, $\left( M_{n,k},\mathcal{F}\right) $ is transversely formal
in this metric.

From (\ref{LieBracketsSolvmanifold}) and (\ref{Riemannian condition}), the
only possible two-dimensional Riemannian foliations are spanned by $\left\{
A,H\right\} $ and $\left\{ B,H\right\} $. We consider the codimension two
foliation with $T\mathcal{F}_{2}=\mathrm{span}\left\{ B,H\right\} $. Then
the characteristic form of this foliation is $\beta \wedge \eta $, and $%
d\left( \beta \wedge \eta \right) =-k\gamma \wedge \left( \beta \wedge \eta
\right) $, so the mean curvature is $\kappa =\kappa _{b}=k\gamma $, which is
not exact, and the normal bundle is involutive. The codimension two
foliation is Riemannian with $H^{0}\left( M_{n,k},\mathcal{F}_{2}\right)
\cong \mathbb{R}$, $H^{2}\left( M_{n,k},\mathcal{F}_{2}\right) \cong 0$,$%
~H_{\kappa }^{0}\left( M_{n,k},\mathcal{F}_{2}\right) \cong 0$, $H_{\kappa
}^{2}\left( M_{n,k},\mathcal{F}_{2}\right) \cong \mathbb{R}$, spanned by the
basic $\kappa $-harmonic volume form $\nu =-\alpha \wedge \gamma =d\left( 
\frac{1}{k}\alpha \right) $. To check $H^{1}\left( M_{n,k},\mathcal{F}%
_{2}\right) $, we see from (\ref{differentialsOnSolvmanifold}) that
multiples of $\gamma $ are the only closed basic forms mod exact forms, and 
\begin{equation*}
\delta _{b}\gamma =-\overline{\ast }d\overline{\ast }\gamma +\kappa
_{b}\lrcorner \gamma =-\overline{\ast }d\left( \alpha \right) +k=-\overline{%
\ast }\left( -k\alpha \wedge \gamma \right) +k=-k+k=0,
\end{equation*}%
so we see that $H^{1}\left( M_{n,k},\mathcal{F}_{2}\right) $ is spanned by $%
\gamma $, which is basic harmonic (and thus $\kappa $ is basic harmonic).
Then we see that $H_{\kappa }^{1}\left( M_{n,k},\mathcal{F}_{2}\right) \cong 
\mathbb{R}$, and this is spanned by the basic $\kappa $-harmonic form $%
\overline{\ast }\gamma =\alpha $. We see in this case that the wedge product
of any basic harmonic form (combination of $1$ and $\gamma $) with any basic 
$\kappa $-harmonic form (combination of $\alpha $ and $\gamma \wedge \alpha $%
) is also basic $\kappa $-harmonic. Therefore, we have transverse formality.
We can see that if we define another codimension two Riemannian foliation by
the span of $\left\{ A,H\right\} $, the results would be similar.

The only codimension three Riemannian foliation has tangent bundle $T%
\mathcal{F}_{3}=\ker \gamma $, which is spanned by $\left\{ A,B,H\right\} $.
Since $\left\vert \gamma \right\vert =1$, the foliation is minimal, which we
can also check by $d\left( \alpha \wedge \beta \wedge \eta \right) =0$. The
basic cohomology injects into ordinary cohomology, and we see that $%
H^{1}\left( M_{n,k},\mathcal{F}_{3}\right) $ is generated by the basic
harmonic form $\gamma $. We clearly have transverse formality in this case
(always true for codimension one foliations).

\section{Appendix}

In this section, we construct a 7-dimensional solvmanifold with an
interesting 3-dimensional nontaut Riemannian foliation with transversely
formal metric and for which the normal bundle is not involutive. Let $%
G=\left\{ M\left( x,y,t,a,b,s,z\right) :\left( x,y,t,a,b,s,z\right) \in 
\mathbb{R}^{7}\right\} $ denote the solvable Lie group of matrices defined
below, which implicitly depends on the nonzero integers $n_{1}$, $n_{2}$ and
nonzero real numbers $k_{1},k_{2}$ that are chosen such that $%
e^{k_{1}}+e^{-k_{1}}=2\cosh \left( k_{1}\right) =K_{1}\in \mathbb{Z}_{\geq 3}
$, $e^{k_{2}}+e^{-k_{2}}=2\cosh \left( k_{2}\right) =K_{2}\in \mathbb{Z}%
_{\geq 3}$. We let $\kappa _{1}=$ $e^{k_{1}}$, $\kappa _{2}=$ $e^{k_{2}}$,
so that each $\kappa _{j}$ satisfies $k^{2}-K_{j}k+1=k^{-2}-K_{j}k^{-1}+1=0$%
. We define the $9\times 9$ matrix $M\left( x,y,t,a,b,s,z\right) $ by 
\begin{eqnarray*}
M\left( x,y,t,a,b,s,z\right) _{11} &=&e^{k_{1}z} \\
M\left( x,y,t,a,b,s,z\right) _{33} &=&e^{-k_{1}z} \\
M\left( x,y,t,a,b,s,z\right) _{44} &=&e^{k_{2}z} \\
M\left( x,y,t,a,b,s,z\right) _{66} &=&e^{-k_{2}z} \\
M\left( x,y,t,a,b,s,z\right) _{22} &=&M\left( x,y,t,a,b,s,z\right) _{55}=1 \\
M\left( x,y,t,a,b,s,z\right) _{77} &=&M\left( x,y,t,a,b,s,z\right) _{88}=1 \\
M\left( x,y,t,a,b,s,z\right) _{21} &=&-n_{1}ye^{k_{1}z} \\
M\left( x,y,t,a,b,s,z\right) _{45} &=&-n_{2}be^{k_{2}z} \\
M\left( x,y,t,a,b,s,z\right) _{\bullet 9} &=&\left( x,t,y,a,s,b,z,0,1\right)
^{T},
\end{eqnarray*}%
with all other entries zero.

A calculation shows that%
\begin{eqnarray*}
&&M\left( x_{1},y_{1},t_{1},a_{1},b_{1},z_{1},s_{1}\right) M\left(
x_{2},y_{2},t_{2},a_{2},b_{2},s_{2},z_{2}\right) \\
&=&M(
x_{1}+x_{2}e^{k_{1}z_{1}},y_{1}+y_{2}e^{-k_{1}z_{1}},t_{1}+t_{2}-n_{1}y_{1}x_{2}e^{k_{1}z_{1}},a_{1}+a_{2}e^{k_{2}z_{1}},\\
&\qquad&\qquad b_{1}+b_{2}e^{-k_{2}z_{1}},s_{1}+s_{2}-n_{2}b_{1}a_{2}e^{k_{2}z_{1}},z_{1}+z_{2}) .
\end{eqnarray*}%
From this we see that the left-invariant vector fields are such that for $%
g\in M$, $\left( L_{g}\right) _{\ast }\partial _{x}=X=e^{k_{1}z}\partial
_{x}-n_{1}ye^{k_{1}z}\partial _{t}$, $\left( L_{g}\right) _{\ast }\partial
_{y}=Y=e^{-k_{1}z}\partial _{y}$, $\left( L_{g}\right) _{\ast }\partial
_{t}=T=\partial _{t}$, $\left( L_{g}\right) _{\ast }\partial
_{a}=A=e^{k_{2}z}\partial _{a}-n_{2}be^{k_{2}z}\partial _{s}$, $\left(
L_{g}\right) _{\ast }\partial _{b}=B=e^{-k_{2}z}\partial _{b}$, $\left(
L_{g}\right) _{\ast }\partial _{s}=S=\partial _{s}$, $\left( L_{g}\right)
_{\ast }\partial _{z}=Z=\partial _{z}$. \newline
The corresponding global dual basis of left-invariant one-forms is $\xi
=e^{-k_{1}z}dx$, $\upsilon =e^{k_{1}z}dy$, $\tau =dt+n_{1}ydx$, $\alpha
=e^{-k_{2}z}da$, $\beta =e^{k_{2}z}db$, $\sigma =ds+n_{2}bda$, $\zeta =dz$.

We have the relations%
\begin{equation}
\begin{array}{ll}
d\xi =k_{1}\xi \wedge \zeta , & d\alpha =k_{2}\alpha \wedge \zeta , \\ 
d\upsilon =-k_{1}\upsilon \wedge \zeta , & d\beta =-k_{2}\beta \wedge \zeta ,
\\ 
d\tau =-n_{1}dx\wedge dy=-n_{1}\xi \wedge \upsilon ,~~ & d\sigma
=-n_{2}da\wedge db=-n_{2}\alpha \wedge \beta , \\ 
d\zeta =0. & 
\end{array}
\label{differentialsNewEx}
\end{equation}

The corresponding bracket relations are:%
\begin{equation*}
\begin{array}{cc}
\left[ X,Z\right] =k_{1}X & \left[ A,Z\right] =k_{2}A \\ 
\left[ Y,Z\right] =-k_{1}Y & \left[ B,Z\right] =-k_{2}B \\ 
\left[ X,Y\right] =-n_{1}T & \left[ A,B\right] =-n_{2}S%
\end{array}%
\end{equation*}%
with all other brackets zero. The differentials of two-forms are as follows:%
\begin{equation}
\begin{array}{c}
\begin{array}{ll}
d\left( \xi \wedge \upsilon \right) =0, & d\left( \alpha \wedge \beta
\right) =0, \\ 
d\left( \upsilon \wedge \tau \right) =-k_{1}\upsilon \wedge \zeta \wedge
\tau , & d\left( \beta \wedge \sigma \right) =-k_{2}\beta \wedge \zeta
\wedge \sigma , \\ 
d\left( \xi \wedge \tau \right) ~~=k_{1}\xi \wedge \zeta \wedge \tau , & 
d\left( \alpha \wedge \sigma \right) =k_{2}\alpha \wedge \zeta \wedge \sigma
, \\ 
d\left( \tau \wedge \zeta \right) =-n_{1}\xi \wedge \upsilon \wedge \zeta ,
& d\left( \sigma \wedge \zeta \right) =-n_{2}\alpha \wedge \beta \wedge
\zeta , \\ 
\multicolumn{1}{c}{d\left( \tau \wedge \alpha \right) =-n_{1}\xi \wedge
\upsilon \wedge \alpha -k_{2}\tau \wedge \alpha \wedge \zeta } & 
\multicolumn{1}{c}{d\left( \xi \wedge \sigma \right) =k_{1}\xi \wedge \zeta
\wedge \sigma +n_{2}\xi \wedge \alpha \wedge \beta } \\ 
\multicolumn{1}{c}{d\left( \tau \wedge \beta \right) =-n_{1}\xi \wedge
\upsilon \wedge \beta +k_{2}\tau \wedge \beta \wedge \zeta } & 
\multicolumn{1}{c}{d\left( \upsilon \wedge \sigma \right) =-k_{1}\upsilon
\wedge \zeta \wedge \sigma +n_{2}\upsilon \wedge \alpha \wedge \beta } \\ 
\multicolumn{1}{c}{d\left( \tau \wedge \sigma \right) =-n_{1}\xi \wedge
\upsilon \wedge \sigma +n_{2}\tau \wedge \alpha \wedge \beta } & 
\multicolumn{1}{c}{d\left( \xi \wedge \zeta \right) =0}%
\end{array}
\\ 
\begin{array}{c}
d\left( \xi \wedge \alpha \right) =k_{1}\xi \wedge \zeta \wedge \alpha -\xi
\wedge k_{2}\alpha \wedge \zeta =\left( -k_{1}-k_{2}\right) \xi \wedge
\alpha \wedge \zeta  \\ 
d\left( \xi \wedge \beta \right) =k_{1}\xi \wedge \zeta \wedge \beta +\xi
\wedge k_{2}\beta \wedge \zeta =\left( -k_{1}+k_{2}\right) \xi \wedge \beta
\wedge \zeta  \\ 
d\left( \upsilon \wedge \alpha \right) =-k_{1}\upsilon \wedge \zeta \wedge
\alpha -\upsilon \wedge k_{2}\alpha \wedge \zeta =\left( k_{1}-k_{2}\right)
\upsilon \wedge \alpha \wedge \zeta  \\ 
d\left( \upsilon \wedge \beta \right) =-k_{1}\upsilon \wedge \zeta \wedge
\beta +\upsilon \wedge k_{2}\beta \wedge \zeta =\left( k_{1}+k_{2}\right)
\upsilon \wedge \beta \wedge \zeta  \\ 
d\left( \upsilon \wedge \zeta \right) =0 \\ 
d\left( \alpha \wedge \zeta \right) =0 \\ 
d\left( \beta \wedge \zeta \right) =0.
\end{array}%
\end{array}  \label{differentialsOf2formsNewEx}
\end{equation}

Let $\Gamma $ be a discrete subgroup of $G$ such that $\Gamma \diagdown G$
is compact. One such example is the set of matrices   
\begin{equation*}
\left( 
\begin{array}{ccccccccc}
\left( \kappa _{1}\right) ^{z} & 0 & 0 & 0 & 0 & 0 & 0 & 0 & x+y\kappa _{1}
\\ 
 A& 1 & 0 & 0 & 0 & 0 & 0 & 0 & t \\ 
0 & 0 & \left( \kappa _{1}\right) ^{-z} & 0 & 0 & 0 & 0 & 0 & x+y\left(
\kappa _{1}\right) ^{-1} \\ 
0 & 0 & 0 & \left( \kappa _{2}\right) ^{z} & 0 & 0 & 0 & 0 & a+b\kappa _{2}
\\ 
0 & 0 & 0 & B& 1 & 0 & 0 & 0 & s \\ 
0 & 0 & 0 & 0 & 0 & \left( \kappa _{2}\right) ^{-z} & 0 & 0 & a+b\left(
\kappa _{2}\right) ^{-1} \\ 
0 & 0 & 0 & 0 & 0 & 0 & 1 & 0 & z \\ 
0 & 0 & 0 & 0 & 0 & 0 & 0 & 1 & 0 \\ 
0 & 0 & 0 & 0 & 0 & 0 & 0 & 0 & 1%
\end{array}%
\right) 
\end{equation*}%
with $A=-n_{1}\left( x \left( \kappa
_{1}\right) ^{z}+y\left( \kappa _{1}\right) ^{z-1}\right)$, 
$B=-n_{2}\left( a+b\left( \kappa _{2}\right) ^{-1}\right) \left(
\kappa _{2}\right) ^{z} $, and 
such that \linebreak $\left( x,y,t,a,b,s,z\right) \in \mathbb{Z}^{7}$.

Using the equations satisfied by $\kappa _{1}$, $\kappa _{2}$, one may
indeed verify that $\Gamma $ is a subgroup. The left-invariant vector fields 
$X,Y,T,A,B,S,Z$ and the left-invariant forms $\xi ,\upsilon ,\tau ,\alpha
,\beta ,\sigma ,\zeta $ descend to well defined one-forms on $\Gamma
\diagdown G$ that satisfy the same relations (\ref{differentialsNewEx}). We
note that none of these forms are exact on the compact manifold, whereas all
were exact on $G$. We choose the metric for which these vector fields form
an orthonormal basis for the tangent space of $\Gamma \diagdown G$.

We consider the three-dimensional distribution spanned by $A$, $S$, and $T$,
which satisfies the Frobenius condition so that it is the tangent space to a
foliation $\mathcal{F}$. We check (\ref{Riemannian condition}) to determine
if the foliation is Riemannian. Since all brackets of basis vectors with $S$
and $T$ are zero, we need only check 
\begin{eqnarray*}
\left\langle \left[ Z,A\right] ,\bullet \right\rangle +\left\langle Z,\left[
\bullet ,A\right] \right\rangle &=&0 \\
\left\langle \left[ B,A\right] ,\bullet \right\rangle +\left\langle B,\left[
\bullet ,A\right] \right\rangle &=&0
\end{eqnarray*}%
where $\bullet $ is any choice of section of $L^{\bot }$. Thus, this
foliation is Riemannian. The characteristic form of the foliation is $\chi_\mathcal{F}
=\alpha \wedge \sigma \wedge \tau $, and we compute%
\begin{eqnarray*}
d\chi_\mathcal{F} &=&d\left( \alpha \wedge \sigma \wedge \tau \right) =d\alpha \wedge
\sigma \wedge \tau -\alpha \wedge d\sigma \wedge \tau +\alpha \wedge \sigma
\wedge d\tau \\
&=&k_{2}\alpha \wedge \zeta \wedge \sigma \wedge \tau -0-n_{1}\alpha \wedge
\sigma \wedge \xi \wedge \upsilon \\
&=&-k_{2}\zeta \wedge \chi_\mathcal{F} -n_{1}\alpha \wedge \sigma \wedge \xi \wedge
\upsilon ,
\end{eqnarray*}%
so that the mean curvature form is $k_{2}\zeta $ and the Euler form is $%
\varphi _{0}=-n_{1}\alpha \wedge \sigma \wedge \xi \wedge \upsilon $, which
is nonzero. Thus, this foliation is nontaut and also as nonintegrable
normal bundle. Also note that $k_{2}\zeta $ is basic, and 
\begin{eqnarray*}
\delta _{b}\left( \kappa _{b}\right) &=&\delta _{b}\left( k_{2}\zeta \right)
=-k_{2}\overline{\ast }d\overline{\ast }\zeta +\kappa _{b}\lrcorner
k_{2}\zeta \\
&=&-k_{2}\overline{\ast }d\left( -\xi \wedge \upsilon \wedge \beta \right)
+k_{2}^{2} \\
&=&k_{2}\overline{\ast }\left( 0\wedge \beta +\xi \wedge \upsilon \wedge
\left( -k_{2}\beta \wedge \zeta \right) \right) +k_{2}^{2} \\
&=&-k_{2}^{2}\overline{\ast }\left( \xi \wedge \upsilon \wedge \beta \wedge
\zeta \right) +k_{2}^{2}=-k_{2}^{2}+k_{2}^{2}=0.
\end{eqnarray*}%
Thus, $\kappa =k_{2}\zeta $ is basic harmonic.

We now compute some of the de Rham cohomology of $\Gamma \diagdown G$ using (%
\ref{differentialsNewEx}) and (\ref{differentialsOf2formsNewEx}). We see
that $H^{1}\left( \Gamma \diagdown G\right) \cong \mathbb{R}$, generated by $%
\left[ \zeta \right] $. The group $H^{2}\left( \Gamma \diagdown G\right)
\cong 0$ (all two closed forms listed in (\ref{differentialsOf2formsNewEx})
are exact). Then  $H^{5}\left( \Gamma \diagdown G\right)\cong H^{2}\left( \Gamma \diagdown G\right)\cong 0$,  and $H^{6}\left( \Gamma
\diagdown G\right) \cong \mathbb{R}$ is generated by $\left[ \xi \wedge
\upsilon \wedge \tau \wedge \alpha \wedge \beta \wedge \sigma \right] $. If
this manifold is indeed formal, then we note that all harmonic $4$-forms $%
\omega $ must be in the kernel of $\zeta \wedge $, because otherwise $\zeta
\wedge $ would yield a nontrivial harmonic $5$-form. But this means all
harmonic $4$-forms are of the form $\zeta \wedge ($harmonic 3-form$)$, which
in turn means each harmonic 3-form is a constant linear combination of the
wedge product of three of the forms $\xi ,\upsilon ,\tau ,\alpha ,\beta
,\sigma $. We note that $\xi \wedge \upsilon \wedge \tau $ and $\alpha
\wedge \beta \wedge \sigma $ are clearly closed and not exact, so that the
rank of $H^{3}\left( \Gamma \diagdown G\right)$ is at least $2$. 

Next, we consider the basic cohomology $H^{\ast }\left( \Gamma \diagdown G,%
\mathcal{F}\right) $, where the foliation has codimension four. The basic
forms must be algebraic combinations of the forms $\xi $, $\upsilon $, $%
\beta $, $\zeta $, because they span the conormal bundle $Q^{\ast }$. We
only need to check if their differentials have no leafwise components. We
see that $d\xi =k_{1}\xi \wedge \zeta $, $d\upsilon =-k_{1}\upsilon \wedge
\zeta $, $d\beta =-k_{2}\beta \wedge \zeta $, and $d\zeta =0$, so only $%
\zeta $ is basic. Since it is also closed and not exact, $H^{1}\left( \Gamma
\diagdown G,\mathcal{F}\right) $ is generated by $\left[ \zeta \right] $. We
have seen above that $\zeta $ is basic-harmonic, since $\delta _{b}\left(
\zeta \right) =0$. Next, we have that locally $\bigwedge^{2}Q^{\ast }$ is
spanned by $\xi \wedge \upsilon ,\upsilon \wedge \beta ,\upsilon \wedge
\zeta ,\xi \wedge \beta ,\beta \wedge \zeta ,\xi \wedge \zeta $. Their
differentials are respectively $0,$ $\left( k_{1}+k_{2}\right) \upsilon
\wedge \beta \wedge \zeta ,0,\left( -k_{1}+k_{2}\right) \xi \wedge \beta
\wedge \zeta ,0,0$, so in fact all of these forms are basic. If $%
k_{1}+k_{2}\neq 0$, $-k_{1}+k_{2}\neq 0$, we see that $\xi \wedge \upsilon $%
, $\upsilon \wedge \zeta $, $\beta \wedge \zeta $, $\xi \wedge \zeta $ are
closed, yet of those $\upsilon \wedge \zeta =d\left( -\frac{1}{k_{1}}%
\upsilon \right) $, $\beta \wedge \zeta =d\left( -\frac{1}{k_{2}}\beta
\right) $, $\xi \wedge \zeta =d\left( \frac{1}{k_{1}}\xi \right) $; $\xi
\wedge \upsilon $ is not the differential of any basic one-form, so that $%
H^{2}\left( \Gamma \diagdown G,\mathcal{F}\right) \cong \mathbb{R}$ with
generator $\xi \wedge \upsilon $. We see also that%
\begin{eqnarray*}
\delta _{b}\left( \xi \wedge \upsilon \right) &=&-\overline{\ast }d\overline{%
\ast }\left( \xi \wedge \upsilon \right) +\kappa _{b}\lrcorner \left( \xi
\wedge \upsilon \right) \\
&=&-\overline{\ast }d\left( \beta \wedge \zeta \right) +0=0,
\end{eqnarray*}%
so that $\xi \wedge \upsilon $ is basic harmonic. If $k_{1}=-k_{2}$, $%
\upsilon \wedge \beta $ is also closed and is not exact, so that $%
H^{2}\left( \Gamma \diagdown G,\mathcal{F}\right) \cong \mathbb{R}^{2}$,
spanned by $\xi \wedge \upsilon $ and $\upsilon \wedge \beta $. In this case,%
\begin{eqnarray*}
\delta _{b}\left( \upsilon \wedge \beta \right) &=&-\overline{\ast }d%
\overline{\ast }\left( \upsilon \wedge \beta \right) +\kappa _{b}\lrcorner
\left( \upsilon \wedge \beta \right) \\
&=&-\overline{\ast }d\left( \xi \wedge \zeta \right) +0=0,
\end{eqnarray*}%
and also $\upsilon \wedge \beta $ is basic harmonic. If $k_{1}=k_{2}$, $\xi
\wedge \beta $ is also closed and is not exact, so that $H^{2}\left( \Gamma
\diagdown G,\mathcal{F}\right) \cong \mathbb{R}^{2}$, spanned by $\xi \wedge
\upsilon $ and $\xi \wedge \beta $. In this case,%
\begin{eqnarray*}
\delta _{b}\left( \xi \wedge \beta \right) &=&-\overline{\ast }d\overline{%
\ast }\left( \xi \wedge \beta \right) +\kappa _{b}\lrcorner \left( \xi
\wedge \beta \right) \\
&=&-\overline{\ast }d\left( -\upsilon \wedge \zeta \right) +0=0,
\end{eqnarray*}%
so also $\xi \wedge \beta $ is basic harmonic.

The three forms in $\bigwedge^{3}Q^{\ast }$ are spanned locally by $\upsilon
\wedge \beta \wedge \zeta $, $\xi \wedge \beta \wedge \zeta $, $\xi \wedge
\upsilon \wedge \zeta $, $\xi \wedge \upsilon \wedge \beta $. We see that
their differentials are, respectively, $0$, $0$, $0$, $-k_{2}\xi \wedge
\upsilon \wedge \beta \wedge \zeta $. We see that $\upsilon \wedge \beta
\wedge \zeta =d\left( \frac{1}{k_{1}+k_{2}}\upsilon \wedge \beta \right) $
if $k_{1}\neq -k_{2}$, $\xi \wedge \beta \wedge \zeta =d\left( \frac{1}{%
-k_{1}+k_{2}}\xi \wedge \beta \right) $ if $k_{1}\neq k_{2}$, and finally $%
\xi \wedge \upsilon \wedge \zeta $ is not the differential of any basic
form. So if $k_{1}\neq -k_{2}$ and $k_{1}\neq k_{2}$, $H^{3}\left( \Gamma
\diagdown G,\mathcal{F}\right) \cong \mathbb{R}$, generated by $\xi \wedge
\upsilon \wedge \zeta $. Note 
\begin{eqnarray*}
\delta _{b}\left( \xi \wedge \upsilon \wedge \zeta \right)  &=&-\overline{%
\ast }d\overline{\ast }\left( \xi \wedge \upsilon \wedge \zeta \right)
+\kappa _{b}\lrcorner \left( \xi \wedge \upsilon \wedge \zeta \right)  \\
&=&-\overline{\ast }d\left( -\beta \right) +k_{2}\xi \wedge \upsilon =-k_{2}%
\overline{\ast }\left( \beta \wedge \zeta \right) +k_{2}\xi \wedge \upsilon 
\\
&=&-k_{2}\overline{\ast }\left( \beta \wedge \zeta \right) +k_{2}\xi \wedge
\upsilon =-k_{2}\xi \wedge \upsilon +k_{2}\xi \wedge \upsilon =0,
\end{eqnarray*}%
so this form is basic harmonic. If $k_{1}=k_{2}$, $\xi \wedge \beta \wedge
\zeta $ generates another cohomology class, and 
\begin{eqnarray*}
\delta _{b}\left( \xi \wedge \beta \wedge \zeta \right)  &=&-\overline{\ast }%
d\overline{\ast }\left( \xi \wedge \beta \wedge \zeta \right) +\kappa
_{b}\lrcorner \left( \xi \wedge \beta \wedge \zeta \right)  \\
&=&-\overline{\ast }d\upsilon +k_{2}\xi \wedge \beta =-\overline{\ast }%
\left( -k_{1}\upsilon \wedge \zeta \right) +k_{2}\xi \wedge \beta  \\
&=&k_{1}\left( -\xi \wedge \beta \right) +k_{2}\xi \wedge \beta =0,
\end{eqnarray*}%
so that form is also basic harmonic. If $k_{1}=-k_{2}$, $\upsilon \wedge
\beta \wedge \zeta $ generates another cohomology class, and%
\begin{eqnarray*}
\delta _{b}\left( \upsilon \wedge \beta \wedge \zeta \right)  &=&-\overline{%
\ast }d\overline{\ast }\left( \upsilon \wedge \beta \wedge \zeta \right)
+\kappa _{b}\lrcorner \left( \upsilon \wedge \beta \wedge \zeta \right)  \\
&=&-\overline{\ast }d\left( -\xi \right) +k_{2}\upsilon \wedge \beta =-%
\overline{\ast }\left( -k_{1}\xi \wedge \zeta \right) +k_{2}\upsilon \wedge
\beta  \\
&=&k_{1}\upsilon \wedge \beta +k_{2}\upsilon \wedge \beta =0,
\end{eqnarray*}%
and that form is also basic harmonic. In either of these last two cases, $%
H^{3}\left( \Gamma \diagdown G,\mathcal{F}\right) \cong \mathbb{R}^{2}$.
Finally, $H^{4}\left( \Gamma \diagdown G,\mathcal{F}\right) \cong 0$ since
the foliation is nontaut. We now calculate the $\kappa $-harmonic forms,
using the differential $d-\kappa \wedge $. We have $H_{\kappa }^{0}\left(
\Gamma \diagdown G,\mathcal{F}\right) \cong 0$ since $\kappa $ is not basic
exact. Among the one-forms, we see that%
\begin{eqnarray*}
d\xi  &=&k_{1}\xi \wedge \zeta ,d\upsilon =-k_{1}\upsilon \wedge \zeta
,d\beta =-k_{2}\beta \wedge \zeta ,d\zeta =0 \\
\kappa \wedge \xi  &=&-k_{2}\xi \wedge \zeta ,\kappa \wedge \upsilon
=-k_{2}\upsilon \wedge \zeta ,\kappa \wedge \beta =-k_{2}\beta \wedge \zeta
,\kappa \wedge \zeta =0.
\end{eqnarray*}%
From the above, $\zeta $ is basic $\kappa $-exact, since $\left( d-\kappa
\wedge \right) \left( -\frac{1}{k_{2}}\right) =\zeta $, and $\beta $ is
basic $\kappa $-closed and not basic $\kappa $-exact, and $\xi $ is basic $%
\kappa $-closed if and only if $k_{1}=-k_{2}$, and $\upsilon $ is basic $%
\kappa $-closed if and only if $k_{1}=k_{2}$. In all of these cases, these
generating forms are basic $\kappa $-harmonic. We could also tell this since 
$\overline{\ast }\delta _{b}=\pm \left( d-\kappa _{b}\wedge \right) 
\overline{\ast }$ and $\overline{\ast }d=\pm \left( \delta _{b}-\kappa
_{b}\lrcorner \right) \overline{\ast }~$on basic forms. In summary, $%
H_{\kappa }^{1}\left( \Gamma \diagdown G,\mathcal{F}\right) \cong \mathbb{R}$
unless $k_{1}=\pm k_{2}$, in which case $H_{\kappa }^{1}\left( \Gamma
\diagdown G,\mathcal{F}\right) \cong \mathbb{R}^{2}$. Similarly, we see that $%
H_{\kappa }^{2}\left( \Gamma \diagdown G,\mathcal{F}\right) \cong \mathbb{R}$%
, generated by the basic $\kappa $-harmonic form $\overline{\ast }\left( \xi
\wedge \upsilon \right) =\beta \wedge \zeta $, if $k_{1}\neq \pm k_{2}$. If $%
k_{1}=k_{2}$, we have the additional basic $\kappa $-harmonic form $%
\overline{\ast }\left( -\xi \wedge \beta \right) =\upsilon \wedge \zeta $,
and if $k_{1}=-k_{2}$, we have $\overline{\ast }\left( \upsilon \wedge \beta
\right) =\xi \wedge \zeta $, also basic $\kappa $-harmonic. Thus, $H_{\kappa
}^{2}\left( \Gamma \diagdown G,\mathcal{F}\right) \cong \mathbb{R}$ unless $%
k_{1}=\pm k_{2}$, in which case $H_{\kappa }^{2}\left( \Gamma \diagdown G,%
\mathcal{F}\right) \cong \mathbb{R}^{2}$. Finally, $H_{\kappa }^{3}\left(
\Gamma \diagdown G,\mathcal{F}\right) $ is generated by $-\overline{\ast }%
\zeta =\xi \wedge \upsilon \wedge \beta $, also basic $\kappa $-harmonic. We
also observe that in all these cases, the wedge product of a basic harmonic
form and a basic $\kappa $-harmonic form is basic $\kappa $-harmonic,
verifying that $\left( \Gamma \diagdown G,\mathcal{F}\right) $ with the
given metric is transversely formal.

\bibliographystyle{amsplain}
\bibliography{habibRichardsonWolak}

\providecommand{\bysame}{\leavevmode\hbox to3em{\hrulefill}\thinspace}
\providecommand{\MR}{\relax\ifhmode\unskip\space\fi MR }
\providecommand{\MRhref}[2]{%
  \href{http://www.ams.org/mathscinet-getitem?mr=#1}{#2}
}
\providecommand{\href}[2]{#2}
\begin{thebibliography}{10}

\bibitem{AlvarezLopez1992BasicComponentMeanCurvature}
Jes\'{u}s~A. Alvarez~L\'{o}pez, \emph{The basic component of the mean curvature
  of {R}iemannian foliations}, Ann. Global Anal. Geom. \textbf{10} (1992),
  no.~2, 179--194.

\bibitem{BangertKatz2004_OptimalHarm1formsConstNorm}
Victor Bangert and Mikhail Katz, \emph{An optimal {L}oewner-type systolic
  inequality and harmonic one-forms of constant norm}, Comm. Anal. Geom.
  \textbf{12} (2004), no.~3, 703--732.

\bibitem{Banyaga2007exampleLC}
Augustin Banyaga, \emph{Examples of non {$d_\omega$}-exact locally conformal
  symplectic forms}, J. Geom. \textbf{87} (2007), no.~1-2, 1--13.

\bibitem{BaerC2015GeometricFormal4MfldsNonnegSectCurv}
Christian B\"{a}r, \emph{Geometrically formal 4-manifolds with nonnegative
  sectional curvature}, Comm. Anal. Geom. \textbf{23} (2015), no.~3, 479--497.

\bibitem{BelgunOntheMetricStrNonKahlerCpxSurf}
Florin~Alexandru Belgun, \emph{On the metric structure of non-{K}\"{a}hler
  complex surfaces}, Math. Ann. \textbf{317} (2000), no.~1, 1--40.

\bibitem{Blair2010RiemGeomContactSymplMflds}
David~E. Blair, \emph{Riemannian geometry of contact and symplectic manifolds},
  second ed., Progress in Mathematics, vol. 203, Birkh\"{a}user Boston, Ltd.,
  Boston, MA, 2010.

\bibitem{BlumenthalHebda1983_deRham}
Robert~A. Blumenthal and James~J. Hebda, \emph{de {R}ham decomposition theorems
  for foliated manifolds}, Ann. Inst. Fourier (Grenoble) \textbf{33} (1983),
  no.~2, 183--198.

\bibitem{BlumenthalHebda1984_Ehresmann}
\bysame, \emph{Ehresmann connections for foliations}, Indiana Univ. Math. J.
  \textbf{33} (1984), no.~4, 597--611.

\bibitem{Bock2016OnLowDimSolvMflds}
Christoph Bock, \emph{On low-dimensional solvmanifolds}, Asian J. Math.
  \textbf{20} (2016), no.~2, 199--262.

\bibitem{BoyerGalickiMatzeu2006etaEinsteinSasakian}
Charles~P. Boyer, Krzysztof Galicki, and Paola Matzeu, \emph{On eta-{E}instein
  {S}asakian geometry}, Comm. Math. Phys. \textbf{262} (2006), no.~1, 177--208.

\bibitem{Cappelletti-MontanoEtAl2015SasakianNilmflds}
Beniamino Cappelletti-Montano, Antonio De~Nicola, Juan~Carlos Marrero, and Ivan
  Yudin, \emph{Sasakian nilmanifolds}, Int. Math. Res. Not. I
  no.~15, 6648--6660.

\bibitem{Carriere1984FlotsRiem}
Yves Carri\`ere, \emph{Flots riemanniens}, no. 116, 1984, Transversal structure
  of foliations (Toulouse, 1982), pp.~31--52.

\bibitem{AndresEtAlExamplesOf4dCpctLCKSolvms}
Luis~C. de~Andr\'{e}s, Luis~A. Cordero, Marisa Fern\'{a}ndez, and Jos\'{e}~J.
  Menc\'{\i}a, \emph{Examples of four-dimensional compact locally conformal
  {K}\"{a}hler solvmanifolds}, Geom. Dedicata \textbf{29} (1989), no.~2,
  227--232.

\bibitem{DeligneGriffithsMorganSullivan1975RealHomotKahlerMflds}
Pierre Deligne, Phillip Griffiths, John Morgan, and Dennis Sullivan, \emph{Real
  homotopy theory of {K}\"{a}hler manifolds}, Invent. Math. \textbf{29} (1975),
  no.~3, 245--274.

\bibitem{Dominguez1998FinitenessTensenessThmsRiemFols}
Demetrio Dom\'inguez, \emph{Finiteness and tenseness theorems for {R}iemannian
  foliations}, Amer. J. Math. \textbf{120} (1998), no.~6, 1237--1276.

\bibitem{ElChamiHabib2022BochnerFormRiemFlows}
Fida El~Chami and Georges Habib, \emph{The {B}ochner formula for {R}iemannian
  flows}, Results Math. \textbf{77} (2022), no.~1, Paper No. 25, 24.

\bibitem{ElKacimiHector1986DecompositionHodge}
Aziz El~Kacimi-Alaoui and Gilbert Hector, \emph{D\'{e}composition de {H}odge
  basique pour un feuilletage riemannien}, Ann. Inst. Fourier (Grenoble)
  \textbf{36} (1986), no.~3, 207--227.

\bibitem{Fedida1971LieFoliations}
Edmond Fedida, \emph{Sur les feuilletages de {L}ie}, C. R. Acad. Sci. Paris
  S\'{e}r. A-B \textbf{272} (1971), A999--A1001.

\bibitem{Fujitani1966ComplexValuedDiffFormsSasaki}
Tamehiro Fujitani, \emph{Complex-valued differential forms on normal contact
  {R}iemannian manifolds}, Tohoku Math. J. (2) \textbf{18} (1966), 349--361.

\bibitem{GrosjeanNagy2009OnTheCohomologyAlgGeomFormal}
Jean-Fran\c{c}ois Grosjean and Paul-Andi Nagy, \emph{On the cohomology algebra
  of some classes of geometrically formal manifolds}, Proc. Lond. Math. Soc.
  (3) \textbf{98} (2009), no.~3, 607--630.

\bibitem{HabibRichardson2013_ModifiedDifferentials}
Georges Habib and Ken Richardson, \emph{Modified differentials and basic
  cohomology for {R}iemannian foliations}, J. Geom. Anal. \textbf{23} (2013),
  no.~3, 1314--1342.

\bibitem{HabibRichardson2018RiemFlowsAdiabatic}
\bysame, \emph{Riemannian flows and adiabatic limits}, Internat. J. Math.
  \textbf{29} (2018), no.~2, 1850011, 21.

\bibitem{Hasegawa1989MinimalModelsNilmflds}
Keizo Hasegawa, \emph{Minimal models of nilmanifolds}, Proc. Amer. Math. Soc.
  \textbf{106} (1989), no.~1, 65--71.

\bibitem{HebdaJ1992ExampleRelevantCurvaturePinching}
James~J. Hebda, \emph{An example relevant to curvature pinching theorems for
  {R}iemannian foliations}, Proc. Amer. Math. Soc. \textbf{114} (1992), no.~1,
  195--199.

\bibitem{Hodge1989HarmonicIntegrals}
William V.~D. Hodge, \emph{The theory and applications of harmonic integrals},
  Cambridge Mathematical Library, Cambridge University Press, Cambridge, 1989,
  Reprint of the 1941 original, With a foreword by Michael Atiyah.

\bibitem{KamberTondeur1984DualityThmsFoliations}
Franz~W. Kamber and Philippe Tondeur, \emph{Duality theorems for foliations},
  no. 116, 1984, Transversal structure of foliations (Toulouse, 1982),
  pp.~108--116.

\bibitem{KamberTondeur1987deRhamHodgeTheory}
\bysame, \emph{de {R}ham-{H}odge theory for {R}iemannian foliations}, Math.
  Ann. \textbf{277} (1987), no.~3, 415--431.

\bibitem{Kasuya2013GeomFormSolvmflds}
Hisashi Kasuya, \emph{Geometrical formality of solvmanifolds and solvable {L}ie
  type geometries}, Geometry of transformation groups and combinatorics, RIMS
  K\^{o}ky\^{u}roku Bessatsu, vol. B39, Res. Inst. Math. Sci. (RIMS), Kyoto,
  2013, pp.~21--33.

\bibitem{Kobayashi1956PrincipalFibreBundles}
Shoshichi Kobayashi, \emph{Principal fibre bundles with the {$1$}-dimensional
  toroidal group}, Tohoku Math. J. (2) \textbf{8} (1956), 29--45.

\bibitem{Kotschick2001OnProductsHarmFms}
Dieter Kotschick, \emph{On products of harmonic forms}, Duke Math. J.
  \textbf{107} (2001), no.~3, 521--531.

\bibitem{Kotschick2017GeometricFormNonnegScalarCurv}
\bysame, \emph{Geometric formality and non-negative scalar curvature}, Pure
  Appl. Math. Q. \textbf{13} (2017), no.~3, 437--451. \MR{3882204}

\bibitem{KotschickTerzic2011_formalityBiquotients}
Dieter Kotschick and Svjetlana Terzi\'{c}, \emph{Geometric formality of
  homogeneous spaces and of biquotients}, Pacific J. Math. \textbf{249} (2011),
  no.~1, 157--176.

\bibitem{MasaX1992DualityMinRiemFols}
Xos\'{e} Masa, \emph{Duality and minimality in {R}iemannian foliations},
  Comment. Math. Helv. \textbf{67} (1992), no.~1, 17--27.

\bibitem{Maurin1997RiemannLegacy}
Krzysztof Maurin, \emph{The {R}iemann legacy}, Mathematics and its
  Applications, vol. 417, Kluwer Academic Publishers Group, Dordrecht, 1997,
  Riemannian ideas in mathematics and physics, Translated from the Polish by
  Jerzy Kowalski-Glikman.

\bibitem{Molino1988_RiemFolsBook}
Pierre Molino, \emph{Riemannian foliations}, Progress in Mathematics, vol.~73,
  Birkh\"{a}user Boston, Inc., Boston, MA, 1988, Translated from the French by
  Grant Cairns, With appendices by Cairns, Y. Carri\`ere, \'{E}. Ghys, E. Salem
  and V. Sergiescu.

\bibitem{NagyP2006OnTheLengthAndProduct}
Paul-Andi Nagy, \emph{On length and product of harmonic forms in {K}\"{a}hler
  geometry}, Math. Z. \textbf{254} (2006), no.~1, 199--218.

\bibitem{OrneaPilca2011RemProdHarmForms}
Liviu Ornea and Mihaela Pilca, \emph{Remarks on the product of harmonic forms},
  Pacific J. Math. \textbf{250} (2011), no.~2, 353--363.

\bibitem{ParkRichardson1996_BasicLaplacian}
Efton Park and Ken Richardson, \emph{The basic {L}aplacian of a {R}iemannian
  foliation}, Amer. J. Math. \textbf{118} (1996), no.~6, 1249--1275.

\bibitem{Reinhart1959FolMfldsBundleLikeMetrics}
Bruce~L. Reinhart, \emph{Foliated manifolds with bundle-like metrics}, Ann. of
  Math. (2) \textbf{69} (1959), 119--132.

\bibitem{RoyoPrieto2004Thesis}
Jos\'{e}~Ignacio Royo~Prieto, \emph{Estudio cohomol\'ogico de flujos
  riemannianos}, Ph.D. thesis, University of the Basque Country, 2004.

\bibitem{RummlerH1979Qulelques}
Hansklaus Rummler, \emph{Quelques notions simples en g\'{e}om\'{e}trie
  riemannienne et leurs applications aux feuilletages compacts}, Comment. Math.
  Helv. \textbf{54} (1979), no.~2, 224--239.

\bibitem{Sullivan1973DiffFormsTopMfldsBook}
Dennis Sullivan, \emph{Differential forms and the topology of manifolds},
  Manifolds---{T}okyo 1973 ({P}roc. {I}nternat. {C}onf., {T}okyo, 1973),
  Published for the Mathematical Society of Japan by University of Tokyo Press,
  Tokyo, 1975, pp.~37--49.

\bibitem{Tachibana1967OnDecompC-HarmFormsSasaki}
Shun-ichi Tachibana, \emph{On a decomposition of {$C$}-harmonic forms in a
  compact {S}asakian space}, Tohoku Math. J. (2) \textbf{19} (1967), 198--212.

\bibitem{Tondeur1997GeometryOfFoliations}
Philippe Tondeur, \emph{Geometry of foliations}, Monographs in Mathematics,
  vol.~90, Birkh\"{a}user Verlag, Basel, 1997.

\bibitem{Totaro2003_CurvatureBiquotients}
Burt Totaro, \emph{Curvature, diameter, and quotient manifolds}, Math. Res.
  Lett. \textbf{10} (2003), no.~2-3, 191--203.

\bibitem{TralleOprea1997SymplMfldsNoKahler}
Aleksy Tralle and John Oprea, \emph{Symplectic manifolds with no {K}\"{a}hler
  structure}, Lecture Notes in Mathematics, vol. 1661, Springer-Verlag, Berlin,
  1997.

\end{thebibliography}

\end{document}